\DeclareMathOperator{\diag}{diag}
\newtheorem*{defuk*}{Означення}
\newtheorem*{thmuk*}{Теорема}
\newtheorem*{lemuk*}{Лема}
\newtheorem*{statuk*}{Твердження}
\newtheorem*{propuk*}{Пропозиція}
\newtheorem*{propertyuk*}{Властивість}
\newtheorem*{remuk*}{Зауваження}
\newtheorem*{exampleuk*}{Приклад}
\newtheorem*{consequk*}{Наслідок}
\newtheorem*{alguk*}{Алгоритм}
\newtheorem{definition}{Definition}
\newtheorem{theorem}{Theorem}
\newtheorem{corollary}{Corollary}
\newtheorem{lemma}{Lemma}
\newtheorem{remark}{Remark}
\def\@makefnmark{\hbox{\@textsuperscript{\normalfont\@thefnmark)}}}
\def\R{\mathbb R}
\def\C{\mathbb C}
\def\N{\mathbb N}
\def\L{\mathbb L}
\def\X{\mathcal X}
\def\ltri|{\left|\!\!\!\!\!\!\;\;\;\left|\!\!\!\!\!\!\;\;\;\left|}
\def\rtri|{\right|\!\!\!\!\!\!\;\;\;\right|\!\!\!\!\!\!\;\;\;\right|}
\numberwithin{equation}{section}
\numberwithin{thmuk}{section}
\numberwithin{remuk}{section}
\numberwithin{lemuk}{section}
\numberwithin{defuk}{section}
\numberwithin{statuk}{section}
\numberwithin{consequk}{section}
\numberwithin{alguk}{section}
\begin{document}

\author{Volodymyr L. Makarov}
\author{Denys V. Dragunov}
\address[V.L. Makarov and D.V. Dragunov]{Department of Numerical Mathematics\\ Institute of  Mathematics of NAS of Ukraine \\
3 Tereshchenkivs'ka Str., Kyiv-4, 01601, Ukraine}
   \email[V.L. Makarov]{makarov@imath.kiev.ua}
   \email[D.V. Dragunov]{dragunovdenis@gmail.com}

\title[Stability preserving structural transformations]{Stability preserving  structural transformations of systems of linear second-order ordinary differential equations}

\date{\today}

\dedicatory{Dedicated to the blessed memory of Professor Vladimir N. Koshlyakov}

\subjclass[2010]{Primary: 34D20; Secondary: 37N15}
\keywords{Kelvin -- Tait -- Chetayev theorems; null solution; stability in the sense of Lyapunov; Lyapunov's second method for stability; Lyapunov transformation; Lyapunov matrix}
\begin{abstract}
  In the paper we have developed a theory of stability preserving structural transformations of systems of second-order ordinary differential equations (ODEs), i.e., the transformations which preserve the property of Lyapunov stability. The main Theorem proved in the paper can be viewed  as an analogous of the Erugin's theorem for the systems of second-order ODEs. The Theorem allowed us to generalize the 3-rd and 4-th Kelvin -- Tait -- Chetayev theorems. The obtained theoretical results were successfully applied to the stability investigation of the rotary motion of a rigid body suspended on a string.
\end{abstract}

\maketitle
\section{Introduction}
In the present paper we concentrate our attention on the following system of second-order ordinary differential equations (ODEs):
\begin{equation}\label{Ko_Ma_De_1.1n}
  J\left(t\right)\ddot{x}\left(t\right)+\left(D\left(t\right)+G\left(t\right)\right)\dot{x}\left(t\right)+
  \left(P\left(t\right)+\Pi\left(t\right)\right)x\left(t\right)=F\left(t,x\left(t\right)\right)
\end{equation}
where
$x\left(t\right)=\verb"col"\left[x_{1}\left(t\right),x_{2}\left(t\right),
\ldots,x_{m}\left(t\right)\right]$
is an unknown vector-function. It is well known that a great number of dynamical systems can be approximately described by system \eqref{Ko_Ma_De_1.1n}. From the physical point of view the matrix $J\left(t\right)=J^{T}\left(t\right)>0$ (the upper index $T$ denotes the operation of transposition) describes the inertia characteristics of a dynamical system; the matrices $D\left(t\right)=D^{T}\left(t\right),
G\left(t\right)=-G^{T}\left(t\right),
P\left(t\right)=-P^{T}\left(t\right)$ and $\Pi\left(t\right)=\Pi^{T}\left(t\right)$ represent
a dissipative, gyroscopic, non-conservative positional and potential forces respectively; the vector-function $F\left(t,x\left(t\right)\right)$ represents an external forces acting on the system. We assume that all matrices are of dimension $m\times m$ and their elements are real-valued functions of $t \in \left[0, \infty\right).$
Also, we assume that the vector-function $F\left(t,x\left(t\right)\right)$ satisfies the condition $\|F\left(t,x\right)\|=O(\|x\|^2).$ System \eqref{Ko_Ma_De_1.1n} also will be referenced to as the second-order matrix differential equation.

As it was pointed out in \cite{Koshlyakov_1997}, when the gyroscopic terms in system
\eqref{Ko_Ma_De_1.1n} are periodic in $t$ with some period
$\tau>0$ then the formal application of the averaging method to the system could result in the discarding of gyroscopic structures though this structures had some stabilizing effect on the system before averaging.
Thus, it is very desirable to have a theoretical framework which allows us to transform the initial system into the system possessing the same stability properties and containing no gyroscopic structures.

A similar problem can be stated regarding the non-conservative positional structures $P\left(t\right).$ An elimination of the non-conservative positional structures from system \eqref{Ko_Ma_De_1.1n} without changing its stability properties has a particular interest in the case when system \eqref{Ko_Ma_De_1.1n} is autonomous, that is,
\begin{equation}\label{Ko_Ma_De_1.1}
    J\ddot{x}(t)+\left(D+G\right)\dot{x}(t)+\left(P+\Pi\right)x(t)=0\footnote{For simplisity the vector function of external forces was not taken into account.},
\end{equation}
where
$x=\verb"col"\left[x_{1}\left(t\right),x_{2}\left(t\right),
\ldots,x_{m}\left(t\right)\right]$ is an unknown vector; here again the matrix $J=J^{T}>0$  describes the inertia characteristics of the dynamical system and matrices $D=D^{T},\; G=-G^{T},\;
\Pi=\Pi^{T},\; P=-P^{T} $ represent
a dissipative, gyroscopic, non-conservative positional and potential forces respectively. The all matrix coefficients in equation \eqref{Ko_Ma_De_1.1} are assumed to be constant real matrices of dimension $m\times m.$

Suppose that the matrix $L(t)$ is {\it a Lyapunov matrix} (see \cite[p. 117]{Gantmaher_v2}). In several cases the application of a substitution
\begin{equation}\label{Ko_Ma_De_1.2}
  x(t)=J^{-\frac{1}{2}}L(t)\xi(t)
\end{equation}
  to equation \eqref{Ko_Ma_De_1.1} could lead us to the autonomous equation\footnote{In what follows,
saying that equation \eqref{Ko_Ma_De_1.3} is autonomous we also will bear in mind that the elements of the matrices $V_{1},\;W_{1}$
are real.}
\begin{equation}\label{Ko_Ma_De_1.3}
    \ddot{\xi}\left(t\right)+V_{1}\dot{\xi}\left(t\right)+W_{1}\xi\left(t\right)=0
\end{equation}
which do not contain a gyroscopic ($V=V^{T}$) and/or nonconservative positional structures ($W=W^{T}$). Since the matrix $L(t)$ is a Lyapunov matrix, the null solutions of systems \eqref{Ko_Ma_De_1.1} and \eqref{Ko_Ma_De_1.3} are stable, asymptotically stable or unstable simultaneously. On the other hand, because of the symmetrical properties of system \eqref{Ko_Ma_De_1.3} the stability investigation of its null solution can be an easier task than the stability investigation of the null solution of system \eqref{Ko_Ma_De_1.1}.

 The approach to the stability investigation of the second-order system of ODEs \eqref{Ko_Ma_De_1.1} which consists of reducing the initial problem to the problem of stability investigation of the corresponding equivalent (in the sense of Lyapunov, see \cite[p. 118]{Gantmaher_v2}) symmetric system \eqref{Ko_Ma_De_1.3}, was first suggested by D. L. Mingori  in \cite{Mingori_Eng}. He has shown that such approach can be very useful and fruitful for the stability investigation in analytical mechanics. In \cite{Mingori_Eng} the author has considered the case when $D> 0$ only. Though in \cite{Muller} the results of D. L. Mingori were extended on the case when $D\geq 0$ the necessary and sufficient conditions providing that a given non-symmetric second-order system of ODEs is equivalent in the sense of Lyapunov to some symmetric second-order system remains unknown: both papers \cite{Mingori_Eng} and \cite{Muller} contain sufficient conditions only.

Later the necessary and sufficient conditions providing that the autonomous system \eqref{Ko_Ma_De_1.1} can be reduced to some other autonomous system \eqref{Ko_Ma_De_1.3} with $W_{1}=W_{1}^{T}$ via substitution
\eqref{Ko_Ma_De_1.2} were found in papers \cite{Kosh_Makarov_UMJ_2000} and \cite{Kosh_Makarov_PMM_2001}. However, the results of that papers were obtained under additional assumptions that
$$G=H\hat{G},$$
\begin{equation}\label{Ko_Ma_De_1.3.1}
   \frac{d L\left(t\right)}{dH}=0, \forall t \geq 0,
\end{equation}
$$D>0, \verb"det"\left(G\right)\neq 0,$$
where $H$ denotes a positive numerical parameter.

In some cases the parameter $H$ can be a part of matrix $\Pi.$ This will be the case when equation
\eqref{Ko_Ma_De_1.1} describes a perturbed motion of a gyroscopic systems installed on the platform which rotates around the vertical with the angular velocity $\omega.$ Using assumptions \eqref{Ko_Ma_De_1.3.1} and assuming that $\Pi=\Pi^{(0)}+H\Pi^{(H)},$ where matrices
$\Pi^{(0)}, \Pi^{(H)}$ are independent on $H,$
the necessary and sufficient conditions providing the reducibility  of system \eqref{Ko_Ma_De_1.1} to some other system \eqref{Ko_Ma_De_1.3} with
$W_{1}=W_{1}^{T}$ where obtained in \cite{Kosh_Makarov_PMM_2007}.

In the present paper without any additional assumptions we have obtained the necessary and sufficient conditions (in terms of the matrix coefficients) providing that a given system of second-order ODEs is equivalent in the sense of Lyapunov to some other system of second-order ODEs with symmetric matrix coefficients. We have considered both the autonomous and non-autonomous cases. In the case when the initial system is autonomous we require that the reduced system be autonomous too.

The paper is organized as follows.

In Section \ref{NON_A_section} we introduce the notion of the {\it structural transformation} of a system of second-order ODEs and give the definition of the $L_{k}$-equivalent systems of second-order ODEs. Using the notion of the $L_{k}$-equivalence we formulate two symmetrization problems for the non-autonomous system of second-order ODEs: the {\it problem of Elimination of Gyroscopic Structures} (EGS problem) and the {\it problem of Elimination of Non-conservative Positional Structures} (ENPS problem). In the section the necessary and sufficient conditions providing the solvability of the both problems where obtained.

In Section \ref{A_Section} we reformulate the EGS and ENPS problems for the case of the autonomous systems and introduce the notion of the $L$-equivalence of two autonomous systems of second-order ODEs. Theorem \ref{T_main_theorem} proved in the section can be considered as an analogous of the Erugin's theorem (see \cite[p. 121]{Gantmaher_v2}) for the autonomous systems of second-order
ODEs. Some useful  consequences from Theorem \ref{T_main_theorem} are stated in Section \ref{Connection_Section}. Among them there is a theorem which generalizes  the theorems of Mingori (see \cite{Mingori_Eng}) and M\"{u}ller (see \cite{Muller}).

In Section \ref{Connection_Section} we discuss the question of the interconnection between the notions of the $L_{k}$-equivalence and equivalence in the sense of Lyapunov.

In Section \ref{Priklad_Section} we demonstrate how the using of structural transformations can facilitate the stability investigation of the null solution of the autonomous second-order system of ODEs describing the rotary motion of a rigid body suspended on a string.

Section \ref{Concl_Section} contains several conclusions about the theoretical results presented in the paper.

\section{Structural transformations of the non-autonomous systems of second-order ordinary differential equations}\label{NON_A_section}

Let us consider the following system of second-order ordinary differential equations:
\begin{equation}\label{Chapt_2_eq_0}
    \mathbf{\ddot{x}}+A\left(t\right)\mathbf{\dot{x}}+B\left(t\right)\mathbf{x}=0,
\end{equation}
where $\mathbf{x}=\overrightarrow{x}\left(t\right)=\left[x_{1}\left(t\right),\ldots, x_{m}\left(t\right)\right]^{T}$ is an unknown vector-function. By default, we assume that $A\left(t\right), B\left(t\right)$ are square matrices of order $m$ whose elements are continuous on $\left[t_{0}, +\infty\right)$ functions, i.e.,  $A\left(t\right), B\left(t\right)\in M_{m}\left(C\left[t_{0}, +\infty\right)\right).$ Also we will use the notation $M_{m}\left(C^{i}\left[t_{0}, +\infty\right)\right),$ $i=1,2$ to denote the linear spaces of square matrices of order $m$ whose elements belong to the functional space $C^{i}\left[t_{0}, +\infty\right),$ $i=1,2,$ and the notation $M_{m,n}(\mathbb{R})$ will be used to denote the space of constant real matrices of dimension $m\times n.$
\begin{definition}
The structural transformation of the second-order system of ordinary differential equations \eqref{Chapt_2_eq_0} is the transformation of unknown vector $\mathbf{x}$ which can be expressed in the form
\begin{equation}\label{Chapt_2_eq_4}
    \mathbf{x}=L\left(t\right)\mathbf{\xi},
\end{equation}
 where $\mathbf{\xi}=\left[\xi_{1}\left(t\right), \ldots, \xi_{m}\left(t\right)\right]^{T}$ is a new unknown vector-function, $L\left(t\right)\in M_{m}\left(C^{2}\left[t_{0}, +\infty\right)\right),$ $\det\left(L\left(t\right)\right)\neq 0,\; \forall t\in\left[t_{0}, +\infty\right).$
\end{definition}

 Applying transformation \eqref{Chapt_2_eq_4} to system \eqref{Chapt_2_eq_0} we obtain the following system of second-order ordinary differential equations:
\begin{equation}\label{Chapt_2_eq_6}
\begin{array}{c}
  L\left(t\right)\mathbf{\ddot{\xi}}\left(t\right)+\left(2\dot{L}\left(t\right)+A\left(t\right)L\left(t\right)\right)\mathbf{\dot{\xi}}\left(t\right)+ \\[1.2em]
  +\left(\ddot{L}\left(t\right)+A\left(t\right)\dot{L}\left(t\right)+B\left(t\right)L\left(t\right)\right)\mathbf{\xi}\left(t\right)= 0, \end{array}
\end{equation}
or , in more convenient form,
\begin{equation}\label{Chapt_2_eq_1}
    \mathbf{\ddot{\xi}}+V\left(t\right)\mathbf{\dot{\xi}}+W\left(t\right)\xi=0,
\end{equation}
where
\begin{equation}\label{Chapt_2_eq_2}
   \begin{array}{c}
    V\left(t\right)=L^{-1}\left(t\right)\left(2\dot{L}\left(t\right)+A\left(t\right)L\left(t\right)\right),\\[1.2em]
    W\left(t\right)=L^{-1}\left(t\right)\left(\ddot{L}\left(t\right)+A\left(t\right)\dot{L}\left(t\right)+B\left(t\right)L\left(t\right)\right).\\
    \end{array}
\end{equation}
Apparently we have that $V\left(t\right), W\left(t\right)\in M_{m}\left(C\left[t_{0}, +\infty\right)\right).$ Therefore, applying transformation \eqref{Chapt_2_eq_4} to the system with continuous on $[t_{0}, +\infty)$ matrix coefficients \eqref{Chapt_2_eq_0},  we arrive at system \eqref{Chapt_2_eq_1} whose matrix coefficients are continuous on  $\left[t_{0}, +\infty\right)$ too.

\begin{definition}\label{O_pro_Lk_ekvival}
We say that the system of second-order ODEs \eqref{Chapt_2_eq_0} is $L_{k}$-equivalent to system \eqref{Chapt_2_eq_1} {\normalfont ($k\in\{0, 1, 2\}$)} if there exists a matrix $L\left(t\right)\in M_{m}\left(C^{2}\left[t_{0}, +\infty\right)\right)$ satisfying conditions
\begin{enumerate}
\item\label{Ozn_M_L_2p_nmova_1} $\left|\det\left(L\left(t\right)\right)\right|>\eta>0,$ $\forall t\in\left[t_{0}, +\infty\right],$
\item\label{Ozn_M_L_2p_nmova_2}  $\sup\limits_{t\in\left[t_{0}, +\infty\right)}\bigg\|\cfrac{d^{i}}{d t^{i}}L\left(t\right)\bigg\|<+\infty,$ $\forall i\in \overline{0, k},$
\end{enumerate}
together with equalities  \eqref{Chapt_2_eq_2}.
          A matrix $L\left(t\right)\in M_{m}\left(C^{2}\left[t_{0}, +\infty\right)\right)$ which satisfy conditions \ref{Ozn_M_L_2p_nmova_1}, \ref{Ozn_M_L_2p_nmova_2} for some $k\in\left\{0,1,2\right\}$ is called an $L_{k}$-matrix.
\end{definition}
 According to the definition given in \cite[p. 353]{Demidovich}, a matrix $L(t)\in M_{m}(C^{1}[t_{0}, +\infty))$ which satisfy conditions \ref{Ozn_M_L_2p_nmova_1}, \ref{Ozn_M_L_2p_nmova_2} for $k=0,$ is called a {\it regular on $\left[t_{0}, +\infty\right)$ matrix}. \label{Page_reg_matr} Transformation \eqref{Chapt_2_eq_4}, where $L\left(t\right)$ is an $L_{2}$-matrix can also be referenced to as a {\it Lyapunov transformation of system of second-order ODEs} (compare with the definition of a {\it Lyapunov transformation} form \cite[p. 116]{Gantmaher_v2}).

Let us consider the following symmetrization problems for the given system of second-order ODEs
 \eqref{Chapt_2_eq_0}:
\begin{enumerate}\label{Page_zadacha_1}
\item\label{Z_1} the {\it problem of Elimination of Gyroscopic Structures} (EGS problem)  which consists in finding an $L_{k}$-matrix $L\left(t\right)$ ($k=0,1,2$) together with matrices $V\left(t\right), W\left(t\right) \in M_{m}\left(\C\left[t_{0}, +\infty\right)\right),$ $V\left(t\right)=V^{T}\left(t\right),$ such that equalities \eqref{Chapt_2_eq_2} hold true $\forall t\in\left[t_{0}, +\infty\right);$
\item\label{Z_2} the {\it problem of Elimination of Non-conservative Positional Structures} (ENPS problem) which consists in finding an $L_{k}$-matrix $L\left(t\right)$ ($k=0,1,2$) together with matrices $V\left(t\right), W\left(t\right) \in M_{m}\left(\C\left[t_{0}, +\infty\right)\right),$ $W\left(t\right)=W^{T}\left(t\right),$ such that equalities \eqref{Chapt_2_eq_2} hold true $\forall t\in\left[t_{0}, +\infty\right).$
\end{enumerate}
If the matrices $L\left(t\right), V\left(t\right), W\left(t\right)$ mentioned in items \ref{Z_1} and/or \ref{Z_2} exist then we say that the EGS and/or ENPS problems for system \eqref{Chapt_2_eq_0} can be solved by means of $L_{k}$-transformation.

Both symmetrization problems can be stated in terms of the $L_{k}$-equivalence in the following way:
\begin{enumerate}
\item to find a system \eqref{Chapt_2_eq_1} which is $L_{k}$-equivalent to the given system \eqref{Chapt_2_eq_0} and such that  $V\left(t\right)=V^{T}\left(t\right)$ (EGS problem);
\item to find a system \eqref{Chapt_2_eq_1} which is $L_{k}$-equivalent to the given system \eqref{Chapt_2_eq_0} and such that $W\left(t\right)=W^{T}\left(t\right)$ (ENPS problem).
\end{enumerate}

Let us find the necessary and sufficient conditions
(in terms of matrices $A\left(t\right), B\left(t\right)$)
 providing the solvability of the EGS and/or ENPS problems for the given system \eqref{Chapt_2_eq_0}, or, in other words, the necessary and sufficient conditions providing that system \eqref{Chapt_2_eq_0} is $L_{k}$-equivalent to some system \eqref{Chapt_2_eq_1} with $V\left(t\right)=V^{T}\left(t\right)$ and/or $W\left(t\right)=W^{T}\left(t\right)$ for some $k=0,1,2$.

 Supposing that the matrix coefficient in front of the vector-function $\mathbf{\dot{\xi}}$ in system \eqref{Chapt_2_eq_1} is symmetric (i.e., there is no gyroscopic structures), we arrive at the following matrix differential equation with respect to the unknown $L_{k}$-matrix $L\left(t\right)$:
\begin{equation}\label{Chapt_2_eq_7}
\begin{array}{c}
  2\left(\dot{L}\left(t\right)L^{T}\left(t\right)-L\left(t\right)\dot{L}^{T}\left(t\right)\right)+A\left(t\right)L\left(t\right)L^{T}\left(t\right)- \\[1.2em]
  -L\left(t\right)L^{T}\left(t\right)A^{T}\left(t\right)=0. \\
\end{array}%
\end{equation}
Similarly to that, assuming that the matrix coefficient in front of the vector-function $\mathbf{\xi}$ in system \eqref{Chapt_2_eq_1} is symmetric (i.e., there is no non-conservative positional structures) we arrive at the equation
\begin{equation}\label{Chapt_2_eq_8}
\begin{array}{c}
  \ddot{L}\left(t\right)L^{T}\left(t\right)-L\left(t\right)\ddot{L}^{T}\left(t\right)+A\left(t\right)\dot{L}\left(t\right)L^{T}\left(t\right)- \\[1.2em]
  -L\left(t\right)\dot{L}^{T}\left(t\right)A^{T}\left(t\right)+
  B\left(t\right)L\left(t\right)L^{T}\left(t\right)-L\left(t\right)L^{T}\left(t\right)B^{T}\left(t\right)=0.
  \end{array}
\end{equation}

 It is easy to verify that there exists a unique pair of matrices $K(t),$  $S(t),$ such that
\begin{equation}\label{Chapt_2_eq_9}
    \dot{L}L^{T}=\dot{L}\left(t\right)L^{T}\left(t\right)=K\left(t\right)+S\left(t\right),\quad K\left(t\right)=-K^{T}\left(t\right),\quad  S\left(t\right)=S^{T}\left(t\right).
\end{equation}
If matrix $L\left(t\right)$ is an $L_{k}$-matrix ($k=0,1,2$) then matrices $K\left(t\right)$ and $S\left(t\right)$ \eqref{Chapt_2_eq_9} belongs to $M_{m}(C^{1}[t_{0}, +\infty)).$ It is easy to see that
\begin{equation}\label{Chapt_2_eq_10}
    \frac{d}{d t}\left(L\left(t\right)L^{T}\left(t\right)\right)=\dot{L}\left(t\right)L^{T}\left(t\right)+L\left(t\right)\dot{L}^{T}\left(t\right)=2S\left(t\right),
\end{equation}
and
\begin{equation}\label{Chapt_2_eq_11}
    L\left(t\right)L^{T}\left(t\right)=2\int\limits_{t_{0}}^{t}S\left(\nu\right)d\nu+S_{0}, \quad L\left(t_{0}\right)L^{T}\left(t_{0}\right)=S_{0}=S_{0}^{T}>0.
\end{equation}

Taking into account equalities \eqref{Chapt_2_eq_9}, \eqref{Chapt_2_eq_11}, we can rewrite equations \eqref{Chapt_2_eq_7} and (\ref{Chapt_2_eq_8}) in the form of
\begin{equation}\label{Chapt_2_eq_12}
\begin{array}{c}
  4K\left(t\right)+A\left(t\right)\bigg(2\int\limits_{t_{0}}^{t}S\left(\nu\right)d\nu+S_{0}\bigg)-\bigg(2\int\limits_{t_{0}}^{t}S\left(\nu\right)d\nu+S_{0}\bigg)A^{T}\left(t\right)=0
\end{array}
\end{equation}
and
\begin{equation}\label{Chapt_2_eq_13}
\begin{array}{c}
  2\dot{K}\left(t\right)+A\left(t\right)\left(S\left(t\right)+K\left(t\right)\right)
  -\left(S\left(t\right)-K\left(t\right)\right)A^{T}\left(t\right)+\\[1.2em]
  +B\left(t\right)\bigg(\int\limits_{t_{0}}^{t}2S\left(\nu\right)d\nu+S_{0}\bigg)
  -\bigg(\int\limits_{t_{0}}^{t}2S\left(\nu\right)d\nu+S_{0}\bigg)B^{T}\left(t\right)=0\\[1.2em]
\end{array}%
\end{equation}
respectively. The question arises: what necessary and sufficient requirements have to be imposed on the matrices $K\left(t\right)$ and $S\left(t\right)$ to provide the existence of an $L_{k}$-matrix $L\left(t\right)$ which satisfy equality \eqref{Chapt_2_eq_9}? The answer to this question is given by the following theorem.
\begin{theorem}\label{Teor_pro_dobutok}
  A regular on $\left[t_{0}, +\infty\right)$ matrix $L\left(t\right)$ which satisfies equality \eqref{Chapt_2_eq_9} exists if and only if the matrices $K\left(t\right), S\left(t\right)$ belong to $M_{m}\left(C\left[t_{0}, +\infty\right)\right)$ and satisfy the following inequalities:
  \begin{equation}\label{Chapt_2_eq_14}
     \bigg|2\int\limits_{t_{0}}^{t}Tr\left(S\left(\nu\right)\right)d\nu+Tr\left(S_{0}\right)\bigg|\leq \mu^{2}, \forall t\in\left[t_{0}, +\infty\right),
  \end{equation}
  \begin{equation}\label{Chapt_2_eq_15}
     \det\bigg(2\int\limits_{t_{0}}^{t}S\left(\nu\right)d\nu+S_{0}\bigg)\geq \eta^{2}, \forall t\in\left[t_{0}, +\infty\right)
  \end{equation}
  for some constants $\mu>0,\;\eta>0$ and real valued positive definite symmetric matrix $S_{0}\in M_{m}\left(\R\right).$
\end{theorem}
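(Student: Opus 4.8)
The plan is to prove the two implications of the equivalence separately; the forward one is essentially bookkeeping, whereas the converse one calls for an explicit construction of $L(t)$ out of the data $K(t),S(t),S_0$. \textbf{Necessity.} Suppose a regular matrix $L(t)$ satisfying \eqref{Chapt_2_eq_9} is given. Since $L\in M_m(C^1[t_0,+\infty))$, the product $\dot L(t)L^T(t)$ is continuous, and because the splitting of a matrix into its symmetric and skew-symmetric parts is unique, $K(t)=\tfrac12(\dot L L^T-L\dot L^T)$ and $S(t)=\tfrac12(\dot L L^T+L\dot L^T)$ lie in $M_m(C[t_0,+\infty))$. By \eqref{Chapt_2_eq_10}--\eqref{Chapt_2_eq_11} one has $L(t)L^T(t)=2\int_{t_0}^{t}S(\nu)\,d\nu+S_0$ with $S_0=L(t_0)L^T(t_0)$, and $S_0$ is real, symmetric and positive definite because $\det L(t_0)\neq 0$. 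Taking determinants gives $\det\bigl(2\int_{t_0}^{t}S(\nu)\,d\nu+S_0\bigr)=(\det L(t))^2$, which exceeds $\eta^2$ by condition \ref{Ozn_M_L_2p_nmova_1}, i.e. \eqref{Chapt_2_eq_15}; taking traces gives $\operatorname{Tr}\bigl(2\int_{t_0}^{t}S(\nu)\,d\nu+S_0\bigr)=\operatorname{Tr}(L(t)L^T(t))$, a nonnegative number equal to the squared Frobenius norm of $L(t)$ and therefore bounded above by some $\mu^2$ thanks to condition \ref{Ozn_M_L_2p_nmova_2} with $k=0$ and the equivalence of matrix norms, i.e. \eqref{Chapt_2_eq_14}.

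\textbf{Sufficiency.} Conversely, assume continuous $K=-K^T$, $S=S^T$ and a real symmetric positive definite $S_0$ satisfy \eqref{Chapt_2_eq_14} and \eqref{Chapt_2_eq_15}, and set $M(t)=2\int_{t_0}^{t}S(\nu)\,d\nu+S_0$, so that $M\in M_m(C^1[t_0,+\infty))$, $M=M^T$, $\dot M=2S$ and $M(t_0)=S_0>0$. First I would strengthen \eqref{Chapt_2_eq_15} to the assertion that $M(t)>0$ for every $t$: the eigenvalues of $M(t)$ depend continuously on $t$, none of them can vanish since $\det M(t)\geq\eta^2>0$, and all of them are positive at $t=t_0$. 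Consequently $N(t):=M(t)^{1/2}$ is a well-defined symmetric, positive definite, invertible matrix belonging to $M_m(C^1[t_0,+\infty))$, because the principal square root is a $C^{\infty}$ map on the cone of symmetric positive definite matrices. Differentiating $N^2=M$ gives $\dot N N+N\dot N=2S$, so that the symmetric part of $\dot N N$ equals $S$; write $K_0(t):=\tfrac12(\dot N N-N\dot N)\in M_m(C[t_0,+\infty))$ for its skew-symmetric part.

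Next I would seek $L$ in the form $L(t)=N(t)\Omega(t)$ with $\Omega(t)$ orthogonal; such an $L$ automatically satisfies $L L^T=N\Omega\Omega^T N=M$ and $L(t_0)L^T(t_0)=S_0$, in agreement with \eqref{Chapt_2_eq_11}. Since $\dot L L^T=\dot N N+N(\dot\Omega\Omega^T)N$, and $\dot\Omega\Omega^T$ is skew-symmetric whenever $\Omega$ is orthogonal while conjugation by the symmetric matrix $N$ preserves skew-symmetry, the symmetric part of $\dot L L^T$ already equals $S$, and its skew-symmetric part equals $K_0+N(\dot\Omega\Omega^T)N$. To turn the latter into $K$ I would set $\Phi(t):=N^{-1}(t)\bigl(K(t)-K_0(t)\bigr)N^{-1}(t)$, which is continuous and skew-symmetric, and let $\Omega$ be the solution of the linear initial value problem $\dot\Omega=\Phi\Omega$, $\Omega(t_0)=I$; being linear with continuous coefficients, it exists on the whole of $[t_0,+\infty)$, and the skew-symmetry of $\Phi$ together with uniqueness of solutions forces $\Omega(t)\Omega^T(t)\equiv I$. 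Then $L=N\Omega\in M_m(C^1[t_0,+\infty))$ fulfils \eqref{Chapt_2_eq_9}, and it is regular: $|\det L(t)|=\sqrt{\det M(t)}\geq\eta$ by \eqref{Chapt_2_eq_15}, while $\|L(t)\|=\|N(t)\|=\sqrt{\lambda_{\max}(M(t))}\leq\sqrt{\operatorname{Tr}(M(t))}\leq\mu$ by \eqref{Chapt_2_eq_14}, the positivity $M(t)>0$ being used in both estimates.

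\textbf{The main obstacle} lies entirely in the converse direction, and consists of two points: (i) upgrading the mere nonvanishing of $\det M(t)$, combined with $M(t_0)>0$, to positive definiteness of $M(t)$ for all $t$, and verifying that $N=M^{1/2}$ inherits $C^1$-regularity, so that $K_0$ and $\Phi$ are bona fide continuous matrix functions; and (ii) producing the orthogonal factor $\Omega$ via the linear equation $\dot\Omega=\Phi\Omega$ --- in particular its global existence on $[t_0,+\infty)$ and the fact that it stays orthogonal. Once these are settled, checking that $L=N\Omega$ satisfies \eqref{Chapt_2_eq_9} and the regularity conditions \ref{Ozn_M_L_2p_nmova_1}, \ref{Ozn_M_L_2p_nmova_2} with $k=0$ amounts to the elementary manipulations indicated above.
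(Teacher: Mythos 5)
Your necessity argument coincides with the paper's: both extract $K$ and $S$ as the skew-symmetric and symmetric parts of $\dot L L^{T}$, integrate to get $L(t)L^{T}(t)=2\int_{t_0}^{t}S(\nu)\,d\nu+S_0$, and read off \eqref{Chapt_2_eq_14}, \eqref{Chapt_2_eq_15} from the trace and the determinant. Your sufficiency argument, however, is genuinely different and, as far as I can check, correct. The paper treats \eqref{Chapt_2_eq_9} as a nonlinear matrix ODE, $\dot L=(S(t)+K(t))(L^{-1})^{T}$, verifies the Picard--Lindel\"of hypotheses on a rectangle via a Neumann-series Lipschitz estimate for $L\mapsto L^{-1}$, and then extends the local solution to all of $[t_0,+\infty)$ by a step-by-step continuation, the uniform step length being guaranteed by the a priori bounds \eqref{Chapt_2_eq_14}, \eqref{Chapt_2_eq_15}. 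You instead build $L$ in closed form up to an orthogonal factor: $L=M^{1/2}\Omega$ with $M(t)=2\int_{t_0}^{t}S(\nu)\,d\nu+S_0$ forced to be positive definite by a connectedness argument on its smallest eigenvalue, and $\Omega$ obtained from the \emph{linear} system $\dot\Omega=\Phi\Omega$ with skew-symmetric $\Phi=N^{-1}(K-K_0)N^{-1}$. This buys you global existence for free (linear ODE), makes the regularity bounds transparent since $\|L\|$ and $\det L$ depend only on $M$, and sidesteps the inversion-Lipschitz computation entirely; the price is the appeal to smoothness of the principal square root on the positive-definite cone, which you correctly flag and justify. The paper's route has the practical advantage that the same Cauchy problem \eqref{Chapt_2_eq_18}, \eqref{Chapt_2_eq_20} is the one prescribed later (Remark \ref{Rem_por_Lk_matr}) for actually computing the $L_k$-matrix, and it yields uniqueness of $L$ for a given $L_0$, which your factorization does not address (though the theorem only asserts existence, so nothing is missing).
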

  \begin{proof} {\it Necessity.} Suppose that there exists a matrix $L\left(t\right)$ which belongs to $M_{m}\left(C^{1}\left[t_{0}, +\infty\right)\right)$ and satisfies equality \eqref{Chapt_2_eq_9} together with inequalities
  \begin{equation}\label{Chapt_2_eq_16}
    \left\|L\left(t\right)\right\|_{F}\leq \mu,\quad \forall t\in \left[t_{0}, +\infty\right)\footnote{Here $\left\|A\right\|_{F}$ denotes the Frobenius norm of matrix $A,$ that is, $\left\|A\right\|_{F}=\sqrt{Tr(AA^{T})}.$},
  \end{equation}
  \begin{equation}\label{Chapt_2_eq_17}
    \left|\det\left(L\left(t\right)\right)\right|\geq \eta,\quad \forall t\in \left[t_{0}, +\infty\right),
  \end{equation}
  for some constants $\mu>0,$ $\eta>0.$
 It easy to see that the matrices $K\left(t\right), S\left(t\right)$ appearing in \eqref{Chapt_2_eq_9} belong to $M_{m}\left(C\left[t_{0}, +\infty\right)\right),$ and the necessity of conditions \eqref{Chapt_2_eq_14}, \eqref{Chapt_2_eq_15} immediately follows from  \eqref{Chapt_2_eq_11}. The necessity in the theorem is proved.

  {\it Sufficiency .} Suppose that $K\left(t\right), S\left(t\right)\in M_{m}(C[t_{0}, +\infty)),$ $K(t)=-K^{T}(t),$ $S(t)=S^{T}(t)$ and inequalities \eqref{Chapt_2_eq_14}, \eqref{Chapt_2_eq_15} hold true for some constants  $\mu>0,\;\eta>0$ and some positive definite symmetric matrix  $S_{0}.$ Assuming that the matrix $L=L\left(t\right)$ satisfies equality \eqref{Chapt_2_eq_9} $\forall t\in\left[t_{0}, +\infty\right)$ together with the initial condition
  \begin{equation}\label{Chapt_2_eq_18}
    L\left(t_{0}\right)=L_{0},\quad L_{0}L_{0}^{T}=S_{0},
  \end{equation}
  we arrive at the conclusion that equality \eqref{Chapt_2_eq_11} together with inequality \eqref{Chapt_2_eq_14} imply inequality \eqref{Chapt_2_eq_16} as well as inequality \eqref{Chapt_2_eq_15} implies inequality \eqref{Chapt_2_eq_17}.

  Let us prove that the solution $L=L\left(t\right)$ to the Cauchy problem \eqref{Chapt_2_eq_9}, \eqref{Chapt_2_eq_18} supplemented by conditions \eqref{Chapt_2_eq_14}, \eqref{Chapt_2_eq_15} exists and is unique on $\left[t_{0}, T\right]$ for any arbitrary  $T>t_{0}.$
If we denote by $\lambda_{i},$ $i=1,2,\ldots, m$ the ascending ordered eigenvalues of matrix $S_{0}$, that is,  $0<\lambda_{1}\leq\lambda_{2}\leq\ldots\leq\lambda_{m},$ then inequality \eqref{Chapt_2_eq_14} implies that
\begin{equation}\label{Chapt_2_eq_19}
    \lambda_{m}\leq \sum\limits_{i=1}^{m}\lambda_{i}=Tr\left(S_{0}\right)\leq \mu^{2}.
\end{equation}
Taking into account inequality \eqref{Chapt_2_eq_19} we can obtain from inequality \eqref{Chapt_2_eq_15} the estimate
$$\lambda_{1}=\cfrac{\det\left(S_{0}\right)}{\lambda_{2}\ldots\lambda_{m}}\geq \cfrac{\eta^{2}}{\mu^{2\left(m-1\right)}}$$
which leads us to the inequality
$$\left\|L^{-1}_{0}\right\|^{2}_{E}=Tr\left(S^{-1}_{0}\right)\leq \cfrac{m}{\lambda_{1}}\leq\cfrac{m\mu^{2\left(m-1\right)}}{\eta^{2}}=\kappa^{2}.$$
Since $\det(L(t))\neq 0,$ $\forall t\in [t_{0}, +\infty),$ equality \eqref{Chapt_2_eq_9} can be rewritten in the form of
\begin{equation}\label{Chapt_2_eq_20}
    \dot{L}=F\left(t, L\right)=\left(S\left(t\right)+K\left(t\right)\right)\left(L^{-1}\right)^{T}.
\end{equation}
Now we intend to show that the matrix-valued function $F\left(t, L\right)$ satisfies conditions of the Picard–Lindel\"{o}f theorem (see, for example, \cite[p. 8]{Hartman_DU}) in the rectangle
\begin{equation}\label{Chapt_2_eq_21}
    \mathfrak{P}=\bigg\{\left(t, L\right)\in \R\times M_{m}\left(\R\right): t_{0}\leq t\leq T,\quad \left\|L-L_{0}\bigg\|_{E}\leq\frac{\delta}{\kappa},\;0<\delta<1\right\}.
\end{equation}
  Taking into account that the elements of matrix-functions $S\left(t\right)$ and $K\left(t\right)$ are continuous on $[t_{0}, +\infty),$ it remains only to show that the matrix-valued function $F\left(t, L\right)$ is Lipschitz-continuous on $\mathfrak{P}$ \eqref{Chapt_2_eq_21} with respect to its second argument $L.$ This fact follows from the following inequalities, which are valid for any matrices  $L_{i}\in M_{m}\left(\R\right), i=1,2,$ such that $\left\|L_{i}-L_{0}\right\|_{E}\leq \cfrac{\delta}{\kappa}:$
$$\left\|L^{-1}_{1}-L^{-1}_{2}\right\|_{E}=\left\|\left(L_{1}-L_{0}+L_{0}\right)^{-1}-\left(L_{2}-L_{0}+L_{0}\right)^{-1}\right\|_{E}=$$
$$=\left\|L_{0}^{-1}\left(\left(L_{1}-L_{0}\right)L^{-1}_{0}+E\right)^{-1}-L_{0}^{-1}\left(\left(L_{2}-L_{0}\right)L^{-1}_{0}+E\right)^{-1}\right\|_{E}=$$
$$=\left\|L_{0}^{-1}\left(\sum_{i=0}^{\infty}\left(-1\right)^{i}\Bigl(\left(L_{1}-L_{0}\right)L^{-1}_{0}\Bigr)^{i}-\sum_{i=0}^{\infty}\left(-1\right)^{i}\Bigl(\left(L_{2}-L_{0}\right)L^{-1}_{0}\Bigr)^{i}\right)\right\|_{E}=$$
$$=\left\|L_{0}^{-1}\left(\sum_{i=1}^{\infty}\left(-1\right)^{i}\left(\Bigl(\left(L_{1}-L_{0}\right)L^{-1}_{0}\Bigr)^{i}-\Bigl(\left(L_{2}-L_{0}\right)L^{-1}_{0}\Bigr)^{i}\right)\right)\right\|_{E}\leq$$
$$\leq\left\|L_{0}^{-1}\right\|_{E}\sum_{i=1}^{\infty}\left(\sum_{j=1}^{i}\left\|L_{1}-L_{0}\right\|_{E}^{i-j}\left\|L_{1}-L_{2}\right\|_{E}\left\|L_{2}-L_{0}\right\|_{E}^{j-1}\left\|L^{-1}_{0}\right\|_{E}^{i}\right)=$$
$$=\left\|L_{0}^{-1}\right\|_{E}^{2}\left\|L_{1}-L_{2}\right\|_{E}\sum_{i=1}^{\infty}\left(\sum_{j=1}^{i}\left\|L_{1}-L_{0}\right\|_{E}^{i-j}\left\|L_{0}-L_{2}\right\|_{E}^{j-1}\left\|L^{-1}_{0}\right\|_{E}^{i-1}\right)\leq$$
\begin{equation}\label{Chapt_2_eq_22}
    \leq\left\|L_{1}-L_{2}\right\|_{E}\kappa^{2}\sum_{i=1}^{\infty}i\delta^{i-1}=\frac{\kappa^{2}}{\left(1-\delta\right)^{2}}\left\|L_{1}-L_{2}\right\|_{E}.
\end{equation}
In the above formula we have used the equality (see, for example, \cite[p. 113]{Gantmaher_v1}) $$\left(A+E\right)^{-1}=\sum\limits_{i=0}^{\infty}\left(-1\right)^{i}A^{i}, \quad \forall A\in M_{m}\left(\R\right), \; \left\|A\right\|<1,$$  and the evident identity $$A^{n}-B^{n}=\sum\limits_{i=1}^{n}A^{n-i}\left(A-B\right)B^{i-1},\quad \forall A,B \in M_{m}\left(\R\right),\;n=1,2,\ldots.$$
Using \eqref{Chapt_2_eq_22} we can estimate the norm of $F\left(t, L\right)$ on the rectangle $\mathfrak{P}$ \eqref{Chapt_2_eq_21} in the following way:
$$\max\limits_{\left\|L-L_{0}\right\|_{E}\leq\frac{\delta}{\kappa}}\left\|\left(L^{-1}\right)^{T}\right\|_{E}=\max\limits_{\left\|L-L_{0}\right\|_{E}\leq\frac{\delta}{\kappa}}\left\|L^{-1}-L_{0}^{-1}+L_{0}^{-1}\right\|_{E}\leq$$
$$\leq \frac{\kappa^{2}}{\left(1-\delta\right)^{2}}\max\limits_{\left\|L-L_{0}\right\|_{E}\leq\frac{\delta}{\kappa}}\left\|L-L_{0}\right\|_{E}+\left\|L_{0}^{-1}\right\|_{E}\leq \frac{\kappa\delta}{\left(1-\delta\right)^{2}}+\kappa,$$
$$\max\limits_{\left(t, L\right)\in \Pi}\left\|F\left(t, L\right)\right\|_{E}\leq \max\limits_{t\in\left[t_{0}, T\right]}\left\|K\left(t\right)+S\left(t\right)\right\|_{E}\max\limits_{\left\|L-L_{0}\right\|_{E}\leq\frac{\delta}{\kappa}}\left\|\left(L^{-1}\right)^{T}\right\|_{E}=F_{\mathfrak{P}}.$$

Thus, the conditions of the Picard–Lindel\"{o}f theorem are satisfied and the solution of the Cauchy problem \eqref{Chapt_2_eq_18}, \eqref{Chapt_2_eq_20} exists at least on the interval $I_{h}=\left[t_{0}, h\right],$ where $h=\min\bigg\{T, \cfrac{\delta}{\kappa F_{\mathfrak{P}}}\bigg\}.$ If $h=T$ then the theorem is proved. Otherwise, if $h<T$ then, applying the same reasoning as above to equation \eqref{Chapt_2_eq_20} with the initial condition $L_{h}=L\left(h\right),$ we arrive at the conclusion that the solution to the Cauchy problem \eqref{Chapt_2_eq_18}, \eqref{Chapt_2_eq_20} exists at least on the interval $\left[t_{0}, 2h\right].$ Apparently, after a finite number of iterations we will prove that the solution exists on $\left[t_{0}, T\right].$ From the arbitrariness of $T$ it follows that the solution to the Cauchy problem \eqref{Chapt_2_eq_18}, \eqref{Chapt_2_eq_20} exists on $\left[t_{0}, +\infty\right).$  The theorem is proved.
\end{proof}

  It is not hard to verify that the matrix   $K\left(t\right)+S\left(t\right)$ where $K\left(t\right)=-K^{T}\left(t\right), S\left(t\right)=S^{T}\left(t\right)$ is bounded on $[t_{0}, +\infty]$ and/or belongs to $M_{m}(C^{k}[t_{0}, +\infty))$ if and only if both of the two matrices $K(t)$ and $S(t)$ are bounded on $[t_{0}, +\infty)$ and/or belong to $M_{m}(C^{k}[t_{0}, +\infty)).$ Taking this fact into account and using Theorem \ref{Teor_pro_dobutok} we can make several conclusions stated below.
  \begin{corollary}\label{N_pro_dobutok}
An $L_{k}$-matrix $L\left(t\right)$ {\normalfont (}$k=1,2${\normalfont )} satisfying equality \eqref{Chapt_2_eq_9} exists if and only if $K\left(t\right), S\left(t\right)\in M_{m}\left(C^{1}\left[t_{0}, +\infty\right)\right)$ and the following conditions hold true:
\begin{enumerate}
\item there exist constants $\mu>0, \eta>0$ and matrix $S_{0}\in M_{m}\left(\R\right),$  $S_{0}=S_{0}^{T}>0$ satisfying inequalities  \eqref{Chapt_2_eq_14}, \eqref{Chapt_2_eq_15};
\item  $\sup\limits_{t\in\left[t_{0}, +\infty\right)}\bigg\|\cfrac{d^{i}}{d t^{i}}K\left(t\right)\bigg\|+\sup\limits_{t\in\left[t_{0}, +\infty\right)}\bigg\|\cfrac{d^{i}}{d t^{i}}S\left(t\right)\bigg\|<+\infty,$ $\forall i\in\overline{0, k-1}.$
\end{enumerate}
 \end{corollary}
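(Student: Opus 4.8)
The plan is to deduce the corollary from Theorem \ref{Teor_pro_dobutok} together with the remark quoted just before the corollary (namely that $K(t)+S(t)$ is bounded on $[t_0,+\infty)$ and/or belongs to $M_m(C^k[t_0,+\infty))$ if and only if both $K(t)$ and $S(t)$ are), by tracking the regularity and the boundedness of the derivatives of the matrix $L(t)$ furnished by that theorem. The point is that an $L_k$-matrix with $k\in\{1,2\}$ is, in particular, regular on $[t_0,+\infty)$ (condition \ref{Ozn_M_L_2p_nmova_2} for $i\in\overline{0,k}$ includes $i=0$), so Theorem \ref{Teor_pro_dobutok} applies directly and only the extra smoothness/boundedness of derivatives has to be handled.

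\emph{Necessity.} Suppose $L(t)$ is an $L_k$-matrix ($k\in\{1,2\}$) satisfying \eqref{Chapt_2_eq_9}. Being regular, it triggers Theorem \ref{Teor_pro_dobutok}, which already gives item 1 (constants $\mu,\eta$ and a matrix $S_0=S_0^T>0$ with \eqref{Chapt_2_eq_14}, \eqref{Chapt_2_eq_15}). For item 2, differentiate the identity $\dot L L^T=K+S$ by the Leibniz rule:
\[
\frac{d^{\,i}}{dt^{\,i}}\bigl(K(t)+S(t)\bigr)=\sum_{l=0}^{i}\binom{i}{l}\,L^{(l+1)}(t)\,\bigl(L^{(i-l)}(t)\bigr)^{T},
\]
and observe that for $i\le k-1$ every factor on the right involves at most $k$ derivatives of $L$ and is therefore bounded, since $L$ is an $L_k$-matrix. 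Thus $K+S$ is bounded together with its derivatives up to order $k-1$ (and lies in $M_m(C^1[t_0,+\infty))$, because $L\in M_m(C^2[t_0,+\infty))$). Applying the cited remark — to $K+S\in M_m(C^1[t_0,+\infty))$ and, separately, to the boundedness of each derivative $\tfrac{d^i}{dt^i}(K+S)$, $i\le k-1$, whose skew and symmetric parts are $\tfrac{d^i}{dt^i}K$ and $\tfrac{d^i}{dt^i}S$ — yields $K,S\in M_m(C^1[t_0,+\infty))$ and item 2.

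\emph{Sufficiency.} Assume $K,S\in M_m(C^1[t_0,+\infty))$ and items 1, 2. By item 1 and Theorem \ref{Teor_pro_dobutok} there exists a matrix $L(t)$, regular on $[t_0,+\infty)$, satisfying \eqref{Chapt_2_eq_9}; fix such an $L$. Since $\det L(t)\neq0$, \eqref{Chapt_2_eq_9} rewrites as $\dot L=(K+S)(L^{-1})^{T}$; moreover $L(t)$ and $L^{-1}(t)=\operatorname{adj}\bigl(L(t)\bigr)/\det L(t)$ are both bounded on $[t_0,+\infty)$ (the latter because $|\det L|$ is bounded away from $0$ while the entries of $\operatorname{adj}(L)$ are polynomials in the bounded entries of $L$). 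As $K+S$ and $L^{-1}$ lie in $M_m(C^1[t_0,+\infty))$, the right-hand side of the ODE does too, so $L\in M_m(C^2[t_0,+\infty))$; and $\dot L=(K+S)(L^{-1})^{T}$, a product of bounded matrices by item 2 with $i=0$, is bounded. Hence $L$ is an $L_1$-matrix, proving the case $k=1$. If $k=2$, one more differentiation,
\[
\ddot L=(\dot K+\dot S)(L^{-1})^{T}+(K+S)\Bigl(\tfrac{d}{dt}L^{-1}\Bigr)^{T},\qquad \tfrac{d}{dt}L^{-1}=-L^{-1}\dot L\,L^{-1},
\]
shows $\ddot L$ bounded (here $\dot K+\dot S$ is bounded by item 2 with $i=1$, and $\tfrac{d}{dt}L^{-1}$ is a product of bounded matrices), so $L$ is an $L_2$-matrix.

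\emph{Anticipated obstacle.} The argument is essentially a bootstrap on top of Theorem \ref{Teor_pro_dobutok} and the cited remark, so the only genuine care needed is the derivative bookkeeping: checking that the two Leibniz expansions never require more than $k$ derivatives of $L$ (respectively, more than $k-1$ derivatives of $K+S$), and that the boundedness of $L^{-1}$ and of $\tfrac{d}{dt}L^{-1}=-L^{-1}\dot LL^{-1}$ is legitimately available from \eqref{Chapt_2_eq_15}/\eqref{Chapt_2_eq_17} and the adjugate formula. None of these steps is deep, but they are the places where an off-by-one in the order of differentiation would break the equivalence.
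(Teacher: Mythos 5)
Your proposal is correct and follows exactly the route the paper intends: the paper gives no explicit proof, stating only that the corollary follows from Theorem \ref{Teor_pro_dobutok} combined with the preceding remark that $K(t)+S(t)$ is bounded and/or of class $C^{k}$ if and only if $K(t)$ and $S(t)$ separately are. Your write-up merely supplies the routine bookkeeping (the Leibniz expansion of $\dot{L}L^{T}$ for necessity and the bootstrap on $\dot{L}=\left(K+S\right)\left(L^{-1}\right)^{T}$ for sufficiency) that the authors left implicit, and the order-of-differentiation counts are all consistent with the definition of an $L_{k}$-matrix.
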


Equation \eqref{Chapt_2_eq_12} and Corollary  \ref{N_pro_dobutok} imply the following theorem.
\begin{theorem}\label{T_Vikl_Giro_Struct}
The given system of second-order ODEs \eqref{Chapt_2_eq_0} with $A\left(t\right)\in M_{m}\left(C^{1}\left[t_{0}, +\infty\right)\right)$ is $L_{k}$-equivalent {\normalfont (}$k=0,1,2${\normalfont )} to some system \eqref{Chapt_2_eq_1} with $V\left(t\right)=V^{T}\left(t\right)$ if and only if there exist the symmetric matrices $S\left(t\right)\in M_{m}\left(C^{1}\left[t_{0}, +\infty\right)\right),$ $S_{0}\in M_{m}\left(\R\right),$ $S_{0}>0$ which define the skew-symmetric matrix $K\left(t\right)$
\begin{equation}\label{Chapt_2_eq_23}
    4K\left(t\right)=\Lambda\left(t\right)A^{T}\left(t\right)-A\left(t\right)\Lambda\left(t\right), \quad \Lambda\left(t\right)=2\int\limits_{t_{0}}^{t}S\left(\nu\right)d \nu+S_{0},
\end{equation}
  and satisfy conditions
\begin{enumerate}
\item\label{T_Vikl_Giro_Struct_U_1} \eqref{Chapt_2_eq_14}, \eqref{Chapt_2_eq_15} for some constants $\mu>0,\; \eta>0;$
\item\label{T_Vikl_Giro_Struct_U_2}  $\sup\limits_{t\in\left[t_{0}, +\infty\right)}\bigg\|\cfrac{d^{i}}{d t^{i}}K\left(t\right)\bigg\|+\sup\limits_{t\in\left[t_{0}, +\infty\right)}\bigg\|\cfrac{d^{i}}{d t^{i}}S\left(t\right)\bigg\|<+\infty,$ $\forall i\in\overline{0, k-1},\; (k\neq 0)\footnote{In the case when $k=0$ condition \ref{T_Vikl_Giro_Struct_U_2} should be neglected.}.$
\end{enumerate}
\end{theorem}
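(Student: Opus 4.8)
The plan is to assemble the theorem from the chain of equivalences already prepared in this section, using Corollary~\ref{N_pro_dobutok} (and Theorem~\ref{Teor_pro_dobutok} for the case $k=0$) as the only non-trivial analytic ingredient; everything else is algebraic bookkeeping.

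First I would record the algebraic reduction. By \eqref{Chapt_2_eq_2} the coefficient $V(t)=L^{-1}(t)(2\dot L(t)+A(t)L(t))$ is symmetric if and only if $(2\dot L+AL)L^{T}=L(2\dot L^{T}+L^{T}A^{T})$, which is exactly equation \eqref{Chapt_2_eq_7}, as was noted in the text preceding Theorem~\ref{Teor_pro_dobutok}. Splitting $\dot L L^{T}$ as in \eqref{Chapt_2_eq_9} into its skew-symmetric part $K(t)$ and symmetric part $S(t)$, and using \eqref{Chapt_2_eq_10}--\eqref{Chapt_2_eq_11} to write $L(t)L^{T}(t)=\Lambda(t)=2\int_{t_{0}}^{t}S(\nu)\,d\nu+S_{0}$ with $S_{0}=L(t_{0})L^{T}(t_{0})=S_{0}^{T}>0$, equation \eqref{Chapt_2_eq_7} becomes \eqref{Chapt_2_eq_12}, i.e. $4K(t)=\Lambda(t)A^{T}(t)-A(t)\Lambda(t)$, which is \eqref{Chapt_2_eq_23}. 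Conversely, if $S(t)=S^{T}(t)\in M_{m}(C^{1}[t_{0},+\infty))$ and $S_{0}=S_{0}^{T}>0$ are prescribed and $K(t)$ is \emph{defined} by \eqref{Chapt_2_eq_23}, then $\Lambda(t)=\Lambda^{T}(t)$ forces $K(t)=-K^{T}(t)$, while $\Lambda\in M_{m}(C^{1})$ and $A\in M_{m}(C^{1})$ give $K\in M_{m}(C^{1})$; thus $K,S$ are admissible data for equation \eqref{Chapt_2_eq_9}.

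Then I would glue the two halves. \emph{Necessity:} if \eqref{Chapt_2_eq_0} is $L_{k}$-equivalent to a system \eqref{Chapt_2_eq_1} with $V=V^{T}$, take the witnessing $L_{k}$-matrix $L(t)$, form $K,S$ from \eqref{Chapt_2_eq_9} and $\Lambda=LL^{T}$; the previous paragraph shows $K$ satisfies \eqref{Chapt_2_eq_23}, and since this $L$ is an $L_{k}$-matrix solving \eqref{Chapt_2_eq_9}, Corollary~\ref{N_pro_dobutok} (for $k=1,2$), resp. Theorem~\ref{Teor_pro_dobutok} (for $k=0$), yields $K,S\in M_{m}(C^{1})$ and conditions \ref{T_Vikl_Giro_Struct_U_1}, \ref{T_Vikl_Giro_Struct_U_2}. \emph{Sufficiency:} if $S,S_{0}$ are given, $K$ is defined by \eqref{Chapt_2_eq_23}, and \ref{T_Vikl_Giro_Struct_U_1}, \ref{T_Vikl_Giro_Struct_U_2} hold, these are precisely the hypotheses of Corollary~\ref{N_pro_dobutok} / Theorem~\ref{Teor_pro_dobutok}, so there is an $L_{k}$-matrix $L(t)$ with $\dot L L^{T}=K+S$ and $L(t_{0})L^{T}(t_{0})=S_{0}$ (initial condition \eqref{Chapt_2_eq_18}); by \eqref{Chapt_2_eq_11} we get $LL^{T}=\Lambda$, so \eqref{Chapt_2_eq_23} becomes \eqref{Chapt_2_eq_12}, hence \eqref{Chapt_2_eq_7}, hence $V=V^{T}$ via \eqref{Chapt_2_eq_2}, and this $L$ realizes the required $L_{k}$-equivalence.

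I expect the only genuinely delicate point to be making the "only if" direction airtight: one must know that a solution $L$ of the Cauchy problem \eqref{Chapt_2_eq_9}, \eqref{Chapt_2_eq_18} has derivatives up to order $k$ bounded exactly when $K,S$ have derivatives up to order $k-1$ bounded (obtained by differentiating \eqref{Chapt_2_eq_20}), so that "admissible data" and "$L_{k}$-matrix" really match — but this is already packaged into Corollary~\ref{N_pro_dobutok}, so in the write-up it is a matter of citation rather than new computation. A minor bookkeeping nuisance is the case $k=0$, where condition \ref{T_Vikl_Giro_Struct_U_2} is vacuous and Theorem~\ref{Teor_pro_dobutok} must be invoked in place of Corollary~\ref{N_pro_dobutok}; I would dispatch it in one sentence.
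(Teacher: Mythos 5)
Your proposal is correct and follows exactly the route the paper intends: the paper gives no written proof of Theorem~\ref{T_Vikl_Giro_Struct} beyond the remark that it follows from equation \eqref{Chapt_2_eq_12} and Corollary~\ref{N_pro_dobutok}, and your argument is precisely the expansion of that remark (reduce $V=V^{T}$ to \eqref{Chapt_2_eq_7}, pass to the $K,S$ decomposition to obtain \eqref{Chapt_2_eq_12}$\,=\,$\eqref{Chapt_2_eq_23}, then invoke Corollary~\ref{N_pro_dobutok}, resp.\ Theorem~\ref{Teor_pro_dobutok} for $k=0$). No gaps.
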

From Theorem \ref{T_Vikl_Giro_Struct} we obtain the following corollary.
\begin{corollary}\label{N_z_T_Vikl_Giro_Struct}
  The given system of second-order ODEs   \eqref{Chapt_2_eq_0} with $A\left(t\right)\in M_{m}\left(C^{1}\left[t_{0}, +\infty\right)\right)$ is always $L_{0}$-equivalent to some system \eqref{Chapt_2_eq_1} with $V\left(t\right)=V^{T}\left(t\right).$
\end{corollary}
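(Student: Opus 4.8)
The plan is to invoke Theorem \ref{T_Vikl_Giro_Struct} with $k=0$ and to exhibit an explicit admissible choice of the symmetric data $S(t)$, $S_{0}$. The simplest candidate is $S(t)\equiv 0$ on $[t_{0},+\infty)$ together with $S_{0}=E$ (the identity matrix). Then by \eqref{Chapt_2_eq_23} we get $\Lambda(t)=2\int_{t_{0}}^{t}0\,d\nu+E=E$ for all $t$, and consequently
\begin{equation*}
4K(t)=\Lambda(t)A^{T}(t)-A(t)\Lambda(t)=A^{T}(t)-A(t),
\end{equation*}
which is manifestly skew-symmetric and, since $A(t)\in M_{m}(C^{1}[t_{0},+\infty))$, lies in $M_{m}(C^{1}[t_{0},+\infty))\subset M_{m}(C[t_{0},+\infty))$; likewise $S(t)\equiv 0\in M_{m}(C^{1}[t_{0},+\infty))$. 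So the regularity hypotheses of Theorem \ref{T_Vikl_Giro_Struct} are met.

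Next I would verify condition \ref{T_Vikl_Giro_Struct_U_1}, i.e. inequalities \eqref{Chapt_2_eq_14} and \eqref{Chapt_2_eq_15}, for this choice. With $\Lambda(t)\equiv E$ we have $2\int_{t_{0}}^{t}\operatorname{Tr}(S(\nu))\,d\nu+\operatorname{Tr}(S_{0})=\operatorname{Tr}(E)=m$, so \eqref{Chapt_2_eq_14} holds with $\mu^{2}=m$; and $\det\bigl(2\int_{t_{0}}^{t}S(\nu)\,d\nu+S_{0}\bigr)=\det(E)=1$, so \eqref{Chapt_2_eq_15} holds with $\eta^{2}=1$. Both are satisfied uniformly in $t$, as required. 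Condition \ref{T_Vikl_Giro_Struct_U_2} is to be neglected in the case $k=0$ per the footnote, so nothing further is needed there.

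Having produced symmetric $S(t)$, $S_{0}>0$ and the induced skew-symmetric $K(t)$ meeting all the hypotheses of Theorem \ref{T_Vikl_Giro_Struct} for $k=0$, that theorem yields an $L_{0}$-matrix $L(t)$ (a regular matrix on $[t_{0},+\infty)$) together with matrices $V(t),W(t)\in M_{m}(C[t_{0},+\infty))$ satisfying \eqref{Chapt_2_eq_2} with $V(t)=V^{T}(t)$; that is, system \eqref{Chapt_2_eq_0} is $L_{0}$-equivalent to a system \eqref{Chapt_2_eq_1} with symmetric velocity-coefficient. Since $A(t)$ was an arbitrary matrix in $M_{m}(C^{1}[t_{0},+\infty))$, the conclusion holds always, which is the assertion. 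There is essentially no obstacle here: the only thing to watch is that one must actually \emph{check} the boundedness/determinant bounds \eqref{Chapt_2_eq_14}–\eqref{Chapt_2_eq_15} rather than take them for granted, and the constant choice $S\equiv 0$, $S_{0}=E$ makes them trivial; all the analytic difficulty (solvability of the Cauchy problem for $L$) has already been absorbed into Theorem \ref{Teor_pro_dobutok} and hence into Theorem \ref{T_Vikl_Giro_Struct}.
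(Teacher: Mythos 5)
Your proposal is correct and follows exactly the route the paper intends: the paper derives this corollary directly from Theorem \ref{T_Vikl_Giro_Struct} without spelling out the choice of data, and the canonical choice $S(t)\equiv 0$, $S_{0}=E$ (giving $\Lambda(t)\equiv E$ and $4K(t)=A^{T}(t)-A(t)$, skew-symmetric and in $M_{m}(C^{1}[t_{0},+\infty))$) is precisely what makes conditions \eqref{Chapt_2_eq_14}, \eqref{Chapt_2_eq_15} trivially hold with $\mu^{2}=m$, $\eta^{2}=1$. Nothing further is needed.
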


From equation \eqref{Chapt_2_eq_13} and Corollary \ref{N_pro_dobutok} we can easily obtain the theorem which gives the necessary and sufficient conditions for solvability of the ENPS problem.
\begin{theorem}\label{T_Vikl_NP_Struct}
The given system of second-order ODEs  \eqref{Chapt_2_eq_0} is $L_{k}$-equivalent {\normalfont (}$k=0,1,2${\normalfont )} to some system \eqref{Chapt_2_eq_1} with $W\left(t\right)=W^{T}\left(t\right)$ if and only if there exist the symmetric matrices $S\left(t\right)\in M_{m}\left(C^{1}\left[t_{0}, +\infty\right)\right),$ $S_{0}\in M_{m}\left(\R\right),$ $S_{0}>0$
and the skew-symmetric matrix $K\left(t\right)$ which satisfy the matrix differential equation
\begin{equation}\label{Chapt_2_eq_24}
\begin{array}{c}
  2\dot{K}\left(t\right)+A\left(t\right)K\left(t\right)+K\left(t\right)A^{T}\left(t\right)+\\ [1.2em] +A\left(t\right)S\left(t\right) -S\left(t\right)A^{T}\left(t\right)
  +B\left(t\right)\Lambda\left(t\right)
  -\Lambda\left(t\right)B^{T}\left(t\right)=0,\\[1.2em]
  \Lambda\left(t\right)=2\int\limits_{t_{0}}^{t}S\left(\nu\right)d \nu+S_{0},
\end{array}%
\end{equation}
and conditions \ref{T_Vikl_Giro_Struct_U_1}, \ref{T_Vikl_Giro_Struct_U_2} of Theorem {\normalfont \ref{T_Vikl_Giro_Struct}}.
\end{theorem}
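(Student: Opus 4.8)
The plan is to obtain the theorem by combining two things that are already in place. The first is the rewriting of the requirement ``$W(t)=W^{T}(t)$'' first as equation \eqref{Chapt_2_eq_8} and then, after the substitution $\dot{L}L^{T}=K+S$ of \eqref{Chapt_2_eq_9}, as equation \eqref{Chapt_2_eq_13} --- which is literally \eqref{Chapt_2_eq_24} once one groups the terms $A(S+K)-(S-K)A^{T}$ as $(AK+KA^{T})+(AS-SA^{T})$. The second is the characterization, given by Corollary \ref{N_pro_dobutok} for $k=1,2$ and by Theorem \ref{Teor_pro_dobutok} for $k=0$, of exactly when a pair $(K,S)$ with $K$ skew-symmetric, $S$ symmetric, arises as the skew-symmetric and symmetric parts of $\dot{L}L^{T}$ for some $L_{k}$-matrix $L$.

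Concretely, I would first observe that, by Definition \ref{O_pro_Lk_ekvival} and formulas \eqref{Chapt_2_eq_2}, the ENPS problem is solvable by an $L_{k}$-transformation precisely when there is an $L_{k}$-matrix $L(t)$ for which $W(t)=L^{-1}(t)\bigl(\ddot{L}+A\dot{L}+BL\bigr)$ is symmetric, and that $W=W^{T}$ is equivalent --- multiply on the left by $L$ and on the right by $L^{T}$ --- to \eqref{Chapt_2_eq_8}. Then, writing $\dot{L}L^{T}=K+S$, recording $L\dot{L}^{T}=S-K$ and $LL^{T}=\Lambda(t)=2\int_{t_{0}}^{t}S\,d\nu+S_{0}$ with $S_{0}=L(t_{0})L^{T}(t_{0})=S_{0}^{T}>0$ (this is \eqref{Chapt_2_eq_11}), and differentiating the first two relations to get $\ddot{L}L^{T}-L\ddot{L}^{T}=2\dot{K}$, one turns \eqref{Chapt_2_eq_8} into \eqref{Chapt_2_eq_24}. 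Hence the ENPS problem is solvable iff there is an $L_{k}$-matrix whose associated pair $(K,S)$ satisfies \eqref{Chapt_2_eq_24}.

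Both implications are then short. For the necessity direction, given such an $L$, the pair $(K,S)$ lies in $M_{m}(C^{1}[t_{0},+\infty))$ because $L\in M_{m}(C^{2}[t_{0},+\infty))$; condition \ref{Ozn_M_L_2p_nmova_1} of Definition \ref{O_pro_Lk_ekvival} becomes $\det\Lambda=(\det L)^{2}\geq\eta^{2}$, i.e.\ \eqref{Chapt_2_eq_15}, the case $i=0$ of condition \ref{Ozn_M_L_2p_nmova_2} becomes $Tr\,\Lambda=\|L\|_{F}^{2}\leq\mu^{2}$, i.e.\ \eqref{Chapt_2_eq_14}, and --- since the $i$-th derivative of $\dot{L}L^{T}$ involves only derivatives of $L$ up to order $i+1$, and since by the remark preceding Corollary \ref{N_pro_dobutok} the matrix $K+S$ and its derivatives are bounded iff $K,S$ and theirs are --- condition \ref{Ozn_M_L_2p_nmova_2} for $i\le k$ yields condition \ref{T_Vikl_Giro_Struct_U_2} of Theorem \ref{T_Vikl_Giro_Struct}. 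For sufficiency, starting from $S,S_{0},K$ satisfying \eqref{Chapt_2_eq_24} and conditions \ref{T_Vikl_Giro_Struct_U_1}, \ref{T_Vikl_Giro_Struct_U_2} (note that \eqref{Chapt_2_eq_24} already forces $K\in M_{m}(C^{1}[t_{0},+\infty))$), Corollary \ref{N_pro_dobutok} (if $k=1,2$) or Theorem \ref{Teor_pro_dobutok} (if $k=0$) produces an $L_{k}$-matrix $L$ with $\dot{L}L^{T}=K+S$ and $L(t_{0})L^{T}(t_{0})=S_{0}$ --- in the case $k=0$ one first obtains a regular matrix and then promotes it to class $C^{2}$ using $\dot{L}=(S+K)(L^{-1})^{T}$ --- so that $LL^{T}=\Lambda$; running the computation of the second paragraph backwards gives \eqref{Chapt_2_eq_8} for this $L$, i.e.\ $W=W^{T}$.

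I do not expect a genuine obstacle: as the text signals, the whole thing reduces ``easily'' to Corollary \ref{N_pro_dobutok}. The only points needing care are bookkeeping ones --- the matrix $S_{0}$ entering \eqref{Chapt_2_eq_24} through $\Lambda$ must be the same $S_{0}$ that appears in \eqref{Chapt_2_eq_14}, \eqref{Chapt_2_eq_15}; the case $k=0$ has to be routed through Theorem \ref{Teor_pro_dobutok} (Corollary \ref{N_pro_dobutok} covering only $k=1,2$), with condition \ref{T_Vikl_Giro_Struct_U_2} then vacuous; and one should verify that the regular matrix supplied by Theorem \ref{Teor_pro_dobutok} is actually of class $C^{2}$, hence a bona fide $L_{0}$-matrix, which is immediate from the differential equation $\dot{L}=(S+K)(L^{-1})^{T}$ it satisfies.
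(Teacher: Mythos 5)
Your proposal is correct and follows essentially the same route as the paper, which disposes of this theorem in one line by combining equation \eqref{Chapt_2_eq_13} (the rewriting of the symmetry condition $W=W^{T}$ via the decomposition $\dot{L}L^{T}=K+S$) with Corollary \ref{N_pro_dobutok}. Your added bookkeeping --- routing the case $k=0$ through Theorem \ref{Teor_pro_dobutok} and checking that the resulting regular matrix is of class $C^{2}$ --- only makes explicit what the paper leaves to the reader.
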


It is worth to emphasize that for any initial condition
$K\left(t_{0}\right)=K_{0}=-K^{T}_{0}\in M_{m}\left(\R\right)$ the solution $K\left(t\right)$ to the matrix differential equation \eqref{Chapt_2_eq_24} is a skew-symmetric matrix. Indeed, if we sum up equation \eqref{Chapt_2_eq_24} with the transposed equation \eqref{Chapt_2_eq_24} we obtain the Cauchy problem
\begin{equation}\label{Chapt_2_eq_24'}
\begin{array}{c}
  2\dot{N}\left(t\right)+A\left(t\right)N\left(t\right)+N\left(t\right)A^{T}\left(t\right)=0, \\[1.2em]
  N\left(t\right)=K\left(t\right)+K^{T}\left(t\right),\; N\left(0\right)=0.
\end{array}
\end{equation}
It is easy to see that the conditions of the Picard–Lindel\"{o}f theorem  for the Cauchy problem \eqref{Chapt_2_eq_24'} are fulfilled and its solution $N(t)$ exists and is unique on $\left[t_{0}, +\infty\right).$ Therefore, the problem has the trivial solution only, that is, $N\left(t\right)=0,\; \forall t\in\left[t_{0}, +\infty\right)$ and $K\left(t\right)=-K^{T}\left(t\right),\;\forall t\in\left[t_{0}, +\infty\right).$  Such conclusion can  also be obtained from the analysis of the analytical expression for the general solution  $K\left(t\right)$ of equation \eqref{Chapt_2_eq_24} (see, for example, \cite[p. 188]{mazko}).

From Theorem \ref{T_Vikl_NP_Struct} we can easily obtain the corollary.
\begin{corollary}\label{N_z_T_Vikl_NP_Struct}
  The given system of second-order ODEs \eqref{Chapt_2_eq_0} is always $L_{0}$-equivalent to some other system \eqref{Chapt_2_eq_1} with $W\left(t\right)=W^{T}\left(t\right).$
\end{corollary}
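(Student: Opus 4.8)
The plan is to invoke Theorem \ref{T_Vikl_NP_Struct} in the case $k=0$, in which the boundedness/smoothness requirement \ref{T_Vikl_Giro_Struct_U_2} is vacuous (by the footnote attached to that condition). Hence it suffices to exhibit three ingredients: symmetric matrices $S(t)\in M_m(C^1[t_0,+\infty))$ and $S_0\in M_m(\R)$, $S_0>0$; a skew-symmetric $K(t)$ solving the matrix differential equation \eqref{Chapt_2_eq_24}; and the estimates \eqref{Chapt_2_eq_14}, \eqref{Chapt_2_eq_15} for some constants $\mu>0$, $\eta>0$. The key simplification is to look for a solution with $S(t)\equiv 0$ and $S_0=E$. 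Then $\Lambda(t)\equiv E$ is constant, so \eqref{Chapt_2_eq_14} reduces to $m\leq\mu^2$ and \eqref{Chapt_2_eq_15} reduces to $1\geq\eta^2$, both of which hold with, say, $\mu=\sqrt m$ and $\eta=1$.

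With this choice, equation \eqref{Chapt_2_eq_24} collapses to the linear inhomogeneous matrix differential equation
$$2\dot K(t)+A(t)K(t)+K(t)A^T(t)+B(t)-B^T(t)=0$$
with coefficients continuous on $[t_0,+\infty)$. First I would prescribe the initial condition $K(t_0)=0$ and establish global solvability exactly as in the proof of Theorem \ref{Teor_pro_dobutok}: on each compact interval $[t_0,T]$ the right-hand side is (globally) Lipschitz in $K$, so the Picard--Lindel\"of theorem gives a unique local solution, and the linear structure rules out finite-time escape, so after finitely many continuation steps the solution extends to all of $[t_0,+\infty)$; since $\dot K$ is then a continuous expression in $A,B,K$, we also get $K(t)\in M_m(C^1[t_0,+\infty))$.

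Next I would verify that $K(t)$ is automatically skew-symmetric, reusing verbatim the argument recorded in the remark following Theorem \ref{T_Vikl_NP_Struct}: the inhomogeneous term $B(t)-B^T(t)$ is skew-symmetric, so adding the reduced equation to its transpose shows that $N(t)=K(t)+K^T(t)$ solves the homogeneous Cauchy problem $2\dot N+AN+NA^T=0$, $N(t_0)=0$, whence $N\equiv 0$ by uniqueness and $K(t)=-K^T(t)$ for all $t$. At this point all hypotheses of Theorem \ref{T_Vikl_NP_Struct} with $k=0$ are met, and the assertion of the corollary follows immediately.

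I do not expect any real obstacle here: the whole point is that specializing to $k=0$ removes the boundedness constraints, and the ansatz $S\equiv 0$ turns the remaining requirement into a linear ODE that is always solvable. The only place demanding a line of care is the passage from local to global existence of $K$, which is routine given the linearity; everything else is a direct specialization of results already proved.
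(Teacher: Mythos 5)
Your proposal is correct and is precisely the instantiation the paper intends: the paper states only that the corollary ``can easily be obtained'' from Theorem \ref{T_Vikl_NP_Struct}, and the natural way to do so is exactly your ansatz $S(t)\equiv 0$, $S_{0}=E$, which makes conditions \eqref{Chapt_2_eq_14}, \eqref{Chapt_2_eq_15} trivial and reduces \eqref{Chapt_2_eq_24} to a linear matrix ODE with continuous coefficients whose global solution with $K(t_{0})=0$ is skew-symmetric by the argument already recorded after Theorem \ref{T_Vikl_NP_Struct}. No gaps; the appeal to the vacuity of condition \ref{T_Vikl_Giro_Struct_U_2} for $k=0$ and to linearity for global existence is exactly what is needed.
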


Combining Theorems \ref{T_Vikl_NPG_Struct} and \ref{T_Vikl_Giro_Struct} we arrive at the following one.
\begin{theorem}\label{T_Vikl_NPG_Struct}
The given system of second-order ODEs  \eqref{Chapt_2_eq_0} with $A\left(t\right)\in M_{m}\left( C^{1}\left[t_{0}, +\infty\right)\right)$ is $L_{k}$-equivalent {\normalfont (}$k=0,1,2${\normalfont )} to some system \eqref{Chapt_2_eq_1} with $V\left(t\right)=V^{T}\left(t\right),$ $W\left(t\right)=W^{T}\left(t\right)$ if and only if there exist the symmetric matrices  $S\left(t\right)\in M_{m}\left(C^{1}\left[t_{0}, +\infty\right)\right),$ $S_{0}\in M_{m}\left(\R\right),$ $S_{0}>0$ which define the skew-symmetric matrix $K\left(t\right)$ \eqref{Chapt_2_eq_23} and satisfy conditions \ref{T_Vikl_Giro_Struct_U_1}, \ref{T_Vikl_Giro_Struct_U_2} of Theorem {\normalfont \ref{T_Vikl_Giro_Struct}} together with equality
\begin{equation}\label{Chapt_2_eq_25}
\begin{array}{c}
    \Lambda\left(t\right)M^{T}\left(t\right)=M\left(t\right)\Lambda\left(t\right),\quad \forall t\in\left[t_{0}, +\infty\right), \\[1.2em]
    M\left(t\right)=\cfrac{1}{2}\cfrac{d}{d t}A\left(t\right)+\cfrac{1}{4}A^{2}\left(t\right)-B\left(t\right),\quad \Lambda\left(t\right)=2\int\limits_{t_{0}}^{t}S\left(\nu\right)d \nu+S_{0}.
  \end{array}
\end{equation}
\end{theorem}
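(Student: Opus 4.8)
The plan is to obtain the statement by fusing the two preceding symmetrization results: Theorem \ref{T_Vikl_Giro_Struct}, which characterizes $L_{k}$-equivalence to a system with $V(t)=V^{T}(t)$, and Theorem \ref{T_Vikl_NP_Struct}, which characterizes $L_{k}$-equivalence to a system with $W(t)=W^{T}(t)$. The key observation is that for a fixed $L_{k}$-matrix $L(t)$ the decomposition \eqref{Chapt_2_eq_9} determines the pair $(K(t),S(t))$ uniquely, while $S_{0}=L(t_{0})L^{T}(t_{0})$ and $\Lambda(t)=L(t)L^{T}(t)$ by \eqref{Chapt_2_eq_11}. Hence \emph{both} theorems constrain one and the same triple $(K,S,S_{0})$, and the transformation \eqref{Chapt_2_eq_4} carries \eqref{Chapt_2_eq_0} into a system with $V=V^{T}$ and $W=W^{T}$ simultaneously if and only if that skew-symmetric $K(t)$ satisfies \emph{at once} the algebraic relation \eqref{Chapt_2_eq_23} (coming from the EGS problem) and the differential relation \eqref{Chapt_2_eq_24} (coming from the ENPS problem), together with conditions \ref{T_Vikl_Giro_Struct_U_1}, \ref{T_Vikl_Giro_Struct_U_2}. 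So the core of the proof is to insert the explicit $K$ of \eqref{Chapt_2_eq_23} into \eqref{Chapt_2_eq_24} and check that the result collapses to \eqref{Chapt_2_eq_25}.

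To do this I would write $4K=\Lambda A^{T}-A\Lambda$ and differentiate, using $\dot{\Lambda}=2S$, to obtain $2\dot{K}=SA^{T}-AS+\tfrac12\bigl(\Lambda\dot{A}^{T}-\dot{A}\Lambda\bigr)$, and separately compute $AK+KA^{T}=\tfrac14\bigl(\Lambda(A^{T})^{2}-A^{2}\Lambda\bigr)$. Substituting these into the left-hand side of \eqref{Chapt_2_eq_24}, the gyroscopic cross terms $SA^{T}$ and $AS$ cancel against the summand $A(t)S(t)-S(t)A^{T}(t)$, and the surviving terms regroup as
\[
  \Lambda\left(\tfrac12\dot{A}^{T}+\tfrac14(A^{T})^{2}-B^{T}\right)-\left(\tfrac12\dot{A}+\tfrac14A^{2}-B\right)\Lambda=\Lambda M^{T}-M\Lambda ,
\]
with $M(t)=\tfrac12\frac{d}{dt}A(t)+\tfrac14A^{2}(t)-B(t)$ as in \eqref{Chapt_2_eq_25}. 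Thus equation \eqref{Chapt_2_eq_24} specialized to a $K$ of the form \eqref{Chapt_2_eq_23} is \emph{equivalent} to $\Lambda M^{T}=M\Lambda$, i.e.\ to \eqref{Chapt_2_eq_25}. Note that differentiating $K$ is legitimate because $A\in M_{m}(C^{1}[t_{0},+\infty))$ and $S\in M_{m}(C^{1}[t_{0},+\infty))$, so the $K$ defined by \eqref{Chapt_2_eq_23} lies in $M_{m}(C^{1}[t_{0},+\infty))$, as required by Theorem \ref{T_Vikl_NP_Struct}.

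Reading this computation in both directions assembles the proof. For necessity: if $L(t)$ is an $L_{k}$-matrix realizing $V=V^{T}$ and $W=W^{T}$, then Theorem \ref{T_Vikl_Giro_Struct} furnishes symmetric $S(t)$, $S_{0}>0$ with the associated $K$ given by \eqref{Chapt_2_eq_23} and conditions \ref{T_Vikl_Giro_Struct_U_1}, \ref{T_Vikl_Giro_Struct_U_2} fulfilled; Theorem \ref{T_Vikl_NP_Struct} tells us that this same $K$ solves \eqref{Chapt_2_eq_24}, and the displayed identity converts that into \eqref{Chapt_2_eq_25}. For sufficiency: given such $S$, $S_{0}$ with \eqref{Chapt_2_eq_25}, define $K$ by \eqref{Chapt_2_eq_23}; it is skew-symmetric, of class $C^{1}$, and by the computation it satisfies \eqref{Chapt_2_eq_24}. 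Conditions \ref{T_Vikl_Giro_Struct_U_1}, \ref{T_Vikl_Giro_Struct_U_2} then let Corollary \ref{N_pro_dobutok} (built on Theorem \ref{Teor_pro_dobutok}) produce an $L_{k}$-matrix $L(t)$ with $\dot{L}L^{T}=K+S$ and $L(t_{0})L^{T}(t_{0})=S_{0}$; Theorem \ref{T_Vikl_Giro_Struct} then gives $V=V^{T}$ for the resulting system and Theorem \ref{T_Vikl_NP_Struct} gives $W=W^{T}$, as desired.

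The main obstacle is purely computational, namely carrying out the substitution of \eqref{Chapt_2_eq_23} into \eqref{Chapt_2_eq_24} without slips: correctly differentiating $K=\tfrac14(\Lambda A^{T}-A\Lambda)$, and then tracking the exact cancellation of the $AS$/$SA^{T}$ terms and the recombination of $2\dot{K}$ with $AK+KA^{T}$ into the symmetric quadratic block $\tfrac14(\Lambda(A^{T})^{2}-A^{2}\Lambda)$. No deeper difficulty arises, since all the analytic content (solvability of the Cauchy problem for $L$, the boundedness bookkeeping) is already packaged in Theorems \ref{Teor_pro_dobutok}, \ref{T_Vikl_Giro_Struct}, \ref{T_Vikl_NP_Struct}. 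The one conceptual point to make explicit is that the two cited theorems constrain the \emph{same} triple $(K,S,S_{0})$ attached to $L$ via \eqref{Chapt_2_eq_9} and \eqref{Chapt_2_eq_11}, so "both $V$ and $W$ symmetric" genuinely means "$K$ obeys \eqref{Chapt_2_eq_23} and \eqref{Chapt_2_eq_24} together"; once this is noted there is nothing extra to verify about the initial data of \eqref{Chapt_2_eq_24}, because \eqref{Chapt_2_eq_23} forces $K(t_{0})=\tfrac14\bigl(S_{0}A^{T}(t_{0})-A(t_{0})S_{0}\bigr)$ automatically.
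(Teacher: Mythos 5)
Your proposal is correct and follows exactly the route the paper intends: the paper states only that the theorem comes from combining Theorems \ref{T_Vikl_Giro_Struct} and \ref{T_Vikl_NP_Struct} and that condition \eqref{Chapt_2_eq_25} ``can be obtained as a result of substitution of the matrix $K(t)$ from equation \eqref{Chapt_2_eq_24} by its expression from \eqref{Chapt_2_eq_23}.'' Your explicit computation (the cancellation of the $AS$/$SA^{T}$ terms and the recombination into $\Lambda M^{T}-M\Lambda$) is the verification the paper omits, and it checks out.
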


 Condition \eqref{Chapt_2_eq_25} can be obtained as a result of substitution of the matrix $K\left(t\right)$ from equation \eqref{Chapt_2_eq_24} by its expression from \eqref{Chapt_2_eq_23}.

\begin{remark}\label{Rem_por_Lk_matr}
Suppose that the conditions of at least one of the Theorems {\normalfont \ref{T_Vikl_Giro_Struct}, \ref{T_Vikl_NP_Struct}} or {\normalfont \ref{T_Vikl_NPG_Struct}} are fulfilled. Then each suitable $L_{k}$-matrix $L\left(t\right)$ can be found as the solution to the matrix differential equation \eqref{Chapt_2_eq_9} supplemented with an initial condition $L(t_{0})=L_{0}$ where $L_{0}$ is an arbitrary matrix form $M_{m}(\mathbb{R}),$ such that $L_{0}L^{T}_{0}=S_{0}.$ Additionally to that, the matrix coefficients of the respective symmetrized system \eqref{Chapt_2_eq_1} can be found via formulas \eqref{Chapt_2_eq_2}.
\end{remark}

\section{Structural transformations of the autonomous systems of second-order ordinary differential equations}\label{A_Section}
Let us consider the two systems of second-order ordinary differential equations
\begin{equation}\label{Chapt_2_2_eq_0}
    \mathbf{\ddot{x}}+A\mathbf{\dot{x}}+B\mathbf{x}=0,\quad A,B\in M_{m}\left(\R\right),
\end{equation}
\begin{equation}\label{Chapt_2_2_eq_-1}
    \mathbf{\ddot{\xi}}+V\mathbf{\dot{\xi}}+W\mathbf{\xi}=0,\quad V,W\in M_{m}\left(\R\right).
\end{equation}
\begin{definition}\label{O_pro_L_ekvival}
We say that the given autonomous system \eqref{Chapt_2_2_eq_0} is
   $L$-equivalent to system \eqref{Chapt_2_2_eq_-1} if there exists a regular on $[0, +\infty)$ matrix $L\left(t\right)$ which satisfies equalities\footnote{Without loss of generality and for the sake of simplicity, in this section we consider the segment $[0, +\infty)$ instead of $[t_{0}, +\infty).$}
\begin{equation}\label{Chapt_2_eq_2'}
   \begin{array}{c}
    V=L^{-1}\left(t\right)\left(2\dot{L}\left(t\right)+AL\left(t\right)\right),\\[1.2em]
    W=L^{-1}\left(t\right)\left(\ddot{L}\left(t\right)+A\dot{L}\left(t\right)+BL\left(t\right)\right),\quad \forall t\in \left[0, +\infty\right).\\
    \end{array}
\end{equation}
\end{definition}

From the first equality of \eqref{Chapt_2_eq_2'} we can easily obtain
   \begin{equation}\label{Chapt_2_2_eq_-2}
    L\left(2t\right)=\exp\left(-A t\right)C \exp\left(V t\right), \quad C\in M_{m}\left(\R\right).
   \end{equation}
It is easy to see that if the matrix $L\left(t\right)$ \eqref{Chapt_2_2_eq_-2} is regular on $[0, +\infty)$ (see definition on page ~\pageref{Page_reg_matr}) then it is an $L_{k}$-matrix for $k=0,1,2.$ Hence, we can see that the notion of the $L_{k}$-equivalence ($k=0,1,2$) for two  autonomous systems according to definition \ref{O_pro_Lk_ekvival} is tantamount to the notion of the $L$-equivalence according to definition \ref{O_pro_L_ekvival}.

 In this section we consider the following symmetrization problems for the autonomous systems of second-order ODEs  \eqref{Chapt_2_2_eq_0}:
\begin{enumerate}\label{Page_zadacha_2}
\item to find an autonomous system \eqref{Chapt_2_2_eq_-1} with $V=V^{T}$ which is $L$-equivalent to the given system \eqref{Chapt_2_2_eq_0} (compare with the EGS problem);
\item to find an autonomous system \eqref{Chapt_2_2_eq_-1} with $W=W^{T}$ which is $L$-equivalent to the given system \eqref{Chapt_2_2_eq_0} (compare with the ENPS problem).
\end{enumerate}

Let us find the necessary and sufficient requirements which have to be imposed on matrices $A, B$ to provide the solvability of the EGS and/or ENPS problems for autonomous system \eqref{Chapt_2_2_eq_0}.

To proceed with this task we have to introduce several convenient notations. We will use the notation $[A,B]$ to describe a commutator of two square matrices $A$ and $B,$ that is,
$$[A,B]=AB-BA.$$ Also, we will use the notation $\{A_{1}A_{2}\ldots A_{n}\}$ to describe a superposition of commutators, that is,
$$\{A_{1}A_{2}\}=[A,B], \quad \{A_{1}A_{2}\ldots A_{n}\}=[\{A_{1}A_{2}\ldots A_{n-1}\}, A_{n}].$$
It is easy to ensure that the commutators obey the following properties:
\begin{equation}\label{Chapt_2_2_eq_1}
    \left[AB, C\right]=\left[A, C\right]B, \quad \forall
A, B, C\in M_{m}(\mathbb{R})\; : \left[B, C\right] =0,
\end{equation}
\begin{equation}\label{Chapt_2_2_eq_2}
    \left[\left[A, B\right], C\right]=\left[A, \left[B,
C\right]\right], \quad \forall A, B, C \in M_{m}(\mathbb{R})\; : \left[A, C\right]=0.
\end{equation}
It is well known that every matrix $A\in M_{m}(\mathbb{R})$ can be expressed in the form of
\begin{equation}\label{Chapt_2_2_eq_3}
    A=T_{A}\;\diag\left[\lambda_{1}\left(A\right)E^{(p_{1})}+H^{(p_{1})},\;\ldots,\;\lambda_{r}\left(A\right)E^{(p_{r})}+H^{(p_{r})}\right]T_{A}^{-1},
\end{equation}
where
\begin{equation}\label{Chapt_2_2_eq_4}
    \lambda_{k}\left(A\right)=\alpha_{k}\left(A\right)+i\;\beta_{k}\left(A\right),\quad
    \alpha_{k}\left(A\right), \beta_{k}\left(A\right) \in \mathbb{R},
\end{equation}
$k=1,2,\ldots,r.$ Here $E^{(p_{k})}$ denotes the identity matrix;  all the elements of square matrix  $H^{(p_{k})}$ are zero except those in the first superdiagonal which are equal to 1. The orders of square matrices  $E^{(p_{k})}$ and $H^{(p_{k})}$ are equal to the power $p_{k}$ of the $k$-th
elementary devisor of matrix $A$ and $T_{A}$ denotes some nonsingular matrix from $M_{m}(\mathbb{R})$ (see, for example, \cite[p. 152]{Gantmaher_v1}).

According to formulas \eqref{Chapt_2_2_eq_3} and \eqref{Chapt_2_2_eq_4} we define \label{Page_AR_AI}
\begin{equation}\label{Chapt_2_2_eq_5}
\left.%
\begin{array}{c}
  A_{R}=T_{A}\;\diag\left[\alpha_{1}\left(A\right)E^{(p_{1})}+H^{(p_{1})},\;\ldots,\alpha_{r}\left(A\right)E^{(p_{r})}+H^{(p_{r})}\right]T_{A}^{-1}, \\[1.2em]
  A_{I}=T_{A}\;\diag\left[i\;\beta_{1}\left(A\right)E^{(p_{1})},\;\ldots,i\;\beta_{r}\left(A\right)E^{(p_{r})}\right]T_{A}^{-1}, \\
\end{array}%
\right.
\end{equation}
then
\begin{equation}\label{Chapt_2_2_eq_6}
    A=A_{R}+A_{I},\; A_{R}A_{I}=A_{I}A_{R}.
\end{equation}
Using the notion of real Jordan canonical form of a real matrix (see \cite[p. 184]{Horn_Johnson}) it is not hard to prove  that if $A\in M_{m}\left(\R\right)$ then $A_{R},\; A_{I}\in M_{m}\left(\R\right).$

Let us consider a Jordan matrix (see, for example, \cite[p. 150]{Horn_Johnson})
\begin{equation}\label{Chapt_2_2_eq_7}
    J_{R}=\diag\left[J_{1}\left(\lambda_{1}\right),\ldots,J_{s}\left(\lambda_{s}\right)\right],
\end{equation}
 where $J_{i}\left(\lambda_{i}\right)$ denotes a Jordan block of size
$m_{i}$ corresponding to the eigenvalue  $\lambda_{i} \in \R,\;i=1,\ldots,s.$ For definiteness we will use the assumption that
\begin{equation}\label{Chapt_2_2_eq_10}
    \lambda_{i}>\lambda_{j}, \; i<j,\;\sum_{i=1}^{s}m_{i}=m.
\end{equation}
 In the above formula $m_{i}$ denotes an algebraic multiplicity of the eigenvalue $\lambda_{i}$ of matrix $J_{R}$ (see \cite[p. 58]{Horn_Johnson}). The following lemma holds true.

\begin{lemma}\label{L_pro_J_formu}
  Suppose that the matrix $L(t)$ is defined by the formula
  \begin{equation}\label{Chapt_2_2_eq_15}
    L(t)=\exp\left(-J_{R}t\right)Q\exp\left(J_{R}t\right), \; t\geq 0,
  \end{equation}
  where $Q\in M_{m}\left(\R\right).$
   Matrix $L(t)$ \eqref{Chapt_2_2_eq_15} is a regular on $[0, +\infty)$ matrix if and only if the matrix $Q$ possesses the following structure:
\begin{equation}\label{Chapt_2_2_eq_8}
    Q=\left[%
\begin{array}{cccc}
  Q_{11} & Q_{12} & \ldots & Q_{1s} \\
  O_{21} & Q_{22} & \ldots & Q_{2s} \\
  \ldots & \ldots & \ldots & \ldots \\
  O_{s1} & O_{s2} & \ldots & Q_{ss} \\
\end{array}%
\right],
\end{equation}  where matrices $Q_{ij}\in M_{m_{i}m_{j}}\left(\R\right)$ satisfy the conditions
\begin{equation}\label{Chapt_2_2_eq_9_1}
    \det\left(Q_{ii}\right)\neq 0,\;
\left[J^{(R)}_{i},Q_{ii}\right]=0
\end{equation}
and $O_{ij}$ denotes a zero-matrix of dimension $m_{i}\times m_{j}, \;\;
i,j=1,2,\ldots,s.$
\end{lemma}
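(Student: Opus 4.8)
The plan is to compute $L(t)$ block-by-block with respect to the Jordan decomposition $J_R = \diag[J_1(\lambda_1),\ldots,J_s(\lambda_s)]$ and then ask when the resulting matrix-function is regular, i.e., has $|\det L(t)|$ bounded below and $\|L(t)\|$ bounded above on $[0,+\infty)$. Writing $Q=(Q_{ij})$ in the corresponding block form, one has $L(t)_{ij} = \exp(-J_i(\lambda_i)t)\,Q_{ij}\,\exp(J_j(\lambda_j)t)$. Since $\exp(J_k(\lambda_k)t) = e^{\lambda_k t}\exp(H^{(m_k)}t)$ and $\exp(H^{(m_k)}t)$ is a matrix whose entries are polynomials in $t$ of degree at most $m_k-1$, each block satisfies $L(t)_{ij} = e^{(\lambda_j-\lambda_i)t}\,P_{ij}(t)$, where $P_{ij}(t)=\exp(-H^{(m_i)}t)\,Q_{ij}\,\exp(H^{(m_j)}t)$ has polynomial entries. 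First I would record these elementary facts and fix notation.

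Next I would analyze boundedness. For $i>j$ we have $\lambda_i<\lambda_j$ by the ordering \eqref{Chapt_2_2_eq_10}, hence $\lambda_j-\lambda_i>0$, so $e^{(\lambda_j-\lambda_i)t}$ grows exponentially; the block $L(t)_{ij}$ stays bounded as $t\to+\infty$ only if $P_{ij}(t)\equiv 0$, and (choosing $t=0$, or comparing leading coefficients) this forces $Q_{ij}=O_{ij}$. This gives the block-upper-triangular shape \eqref{Chapt_2_2_eq_8}. For $i=j$ the exponential factor is $1$ and $L(t)_{ii}=\exp(-H^{(m_i)}t)\,Q_{ii}\,\exp(H^{(m_i)}t)$ has polynomial entries; boundedness on $[0,+\infty)$ then forces it to be constant in $t$, and differentiating at $t=0$ gives $[H^{(m_i)},Q_{ii}]=0$, i.e., $[J_i^{(R)},Q_{ii}]=0$ since $J_i^{(R)}=\lambda_i E^{(m_i)}+H^{(m_i)}$ and the scalar part commutes with everything. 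For $i<j$ the factor $e^{(\lambda_j-\lambda_i)t}$ decays, so those blocks are automatically bounded and $Q_{ij}$ is unconstrained. The determinant condition is handled by noting that once $Q$ is block-upper-triangular, $\det L(t) = \prod_{i=1}^{s}\det(L(t)_{ii}) = \prod_{i=1}^{s}\det(Q_{ii})$ (the conjugating exponentials have determinant $1$), which is a nonzero constant iff $\det(Q_{ii})\neq 0$ for all $i$; this both shows necessity of $\det(Q_{ii})\neq0$ and, for the converse direction, that $|\det L(t)|$ is bounded below by a positive constant. Finally I would assemble the converse: if $Q$ has the stated structure \eqref{Chapt_2_2_eq_8}--\eqref{Chapt_2_2_eq_9_1}, then each diagonal block is constant, each strictly-upper block decays, and the determinant is a nonzero constant, so $L(t)$ is regular.

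The main obstacle, and the point needing the most care, is the implication ``$L(t)_{ii}$ bounded on $[0,+\infty)$ $\Rightarrow$ $L(t)_{ii}$ constant $\Rightarrow$ $[J_i^{(R)},Q_{ii}]=0$'': one must argue that a matrix-function of the form $e^{-Ht}Qe^{Ht}$ with $H$ nilpotent has entries that are polynomials in $t$, and that such a function is bounded precisely when it is constant, and that constancy is equivalent to the commutator vanishing (e.g. via $\tfrac{d}{dt}\big(e^{-Ht}Qe^{Ht}\big)=e^{-Ht}[Q,H]e^{Ht}$, which vanishes identically iff $[Q,H]=0$). Everything else — the block exponential formula, the triangularization from the $i>j$ blocks, and the determinant computation — is routine once the notation is set up.
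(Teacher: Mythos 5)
Your proposal is correct and follows essentially the same route as the paper's proof: block decomposition of $Q$ along the Jordan structure of $J_{R}$, the exponential-times-polynomial form $L(t)_{ij}=e^{(\lambda_{j}-\lambda_{i})t}P_{ij}(t)$, forcing the sub-diagonal blocks to vanish by unboundedness, forcing the diagonal blocks to be constant (hence the commutation conditions), and the determinant identity $\det L(t)=\prod_{i}\det(Q_{ii})$. The only cosmetic differences are that the paper reduces to the case $s=2$ without loss of generality and extracts $\left[G_{j},Q_{jj}\right]=0$ from the polynomial identity $Q_{jj}\exp(G_{j}t)=\exp(G_{j}t)Q_{jj}$ rather than from the derivative formula $\tfrac{d}{dt}\bigl(e^{-Ht}Qe^{Ht}\bigr)=e^{-Ht}\left[Q,H\right]e^{Ht}$.
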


\begin{proof}   Without loss of generality, we consider the case when $s=2,$ that is, when the matrix $J_{R}$ has only two different eigenvalues $\lambda_{1}, \lambda_{2}\in\R,$ $\lambda_{1}>\lambda_{2}$ of an algebraic multiplicity  $m_{1}\geq 0$ and $m_{2}\geq 0$ respectively, $m_{1}+m_{2}=m$.
Let us denote
\begin{equation}\label{Chapt_2_2_eq_11}
    G_{1}=J_{1}\left(0\right),\;G_{2}=J_{2}\left(0\right).
\end{equation}
 From formula \eqref{Chapt_2_2_eq_7}, taking into account notation \eqref{Chapt_2_2_eq_11}, we obtain (see \cite[p. 157]{Gantmaher_v1})
\begin{equation}\label{Chapt_2_2_eq_12}
\begin{array}{c}
  \exp\left( J_{R}t\right)=\diag\left[e^{\lambda_{1}t}\sum\limits_{i=0}^{m_{1}}\frac{1}{i!}t^{i}G_{1}^{i},\;e^{\lambda_{2}t}\sum\limits_{i=0}^{m_{2}}\frac{1}{i!}t^{i}G_{2}^{i}\right], \\ [1.2em]
  \exp\left(- J_{R}t\right)=\left(\exp\left( J_{R}t\right)\right)^{-1}=\diag\left[e^{-\lambda_{1}t}\sum\limits_{i=0}^{m_{1}}\frac{\left(-t\right)^{i}}{i!}G_{1}^{i},\;e^{-\lambda_{2}t}\sum\limits_{i=0}^{m_{2}}\frac{\left(-t\right)^{i}}{i!}G_{2}^{i}\right].
  \end{array}
\end{equation}

{\it Necessity.} Assume that the matrix $L\left(t\right)$ \eqref{Chapt_2_2_eq_15} is a regular on $[0, +\infty)$ matrix. Taking into account formulas \eqref{Chapt_2_2_eq_12} we obtain
\begin{equation}\label{Chapt_2_2_eq_13}
\begin{array}{l}
  L\left(t\right)=\exp\left(-J_{R}t\right)\left[
                                   \begin{array}{cc}
                                     Q_{11} & Q_{12} \\
                                     Q_{21} & Q_{22} \\
                                   \end{array}
                                 \right]
    \exp\left(J_{R}t\right)=\left[
                              \begin{array}{cc}
                                L_{11}\left(t\right) & L_{12}\left(t\right) \\
                                L_{21}\left(t\right) & L_{22}\left(t\right) \\
                              \end{array}
                            \right],
     \\[1.2em]
     L_{11}\left(t\right)=\left(\sum\limits_{i=0}^{m_{1}}\frac{\left(-t\right)^{i}}{i!}G_{1}^{i}\right)Q_{11}\left(\sum\limits_{i=0}^{m_{1}}\frac{t^{i}}{i!}G_{1}^{i}\right),\\[1.2em]
     L_{12}\left(t\right)=e^{\left(\lambda_{2}-\lambda_{1}\right)t}\left(\sum\limits_{i=0}^{m_{1}}\frac{\left(-t\right)^{i}}{i!}G_{1}^{i}\right)Q_{12}\left(\sum\limits_{i=0}^{m_{2}}\frac{t^{i}}{i!}G_{2}^{i}\right),\\[1.2em]
     L_{21}\left(t\right)=e^{\left(\lambda_{1}-\lambda_{2}\right)t}\left(\sum\limits_{i=0}^{m_{2}}\frac{\left(-t\right)^{i}}{i!}G_{2}^{i}\right)Q_{21}\left(\sum\limits_{i=0}^{m_{1}}\frac{t^{i}}{i!}G_{1}^{i}\right),\\[1.2em]
     L_{22}\left(t\right)=\left(\sum\limits_{i=0}^{m_{2}}\frac{\left(-t\right)^{i}}{i!}G_{2}^{i}\right)Q_{22}\left(\sum\limits_{i=0}^{m_{2}}\frac{t^{i}}{i!}G_{2}^{i}\right).\\
\end{array}
\end{equation}
Since the matrices $\sum\limits_{i=0}^{m_{j}}\frac{\left(\pm t\right)^{i}}{i!}G_{j}^{i},\;j=1,2$ are nonsingular, it is easy to see that the matrix $L\left(t\right)$ \eqref{Chapt_2_2_eq_15} has unbounded norm on $[0, \infty)$ unless $Q_{21}=O_{21}$ and matrices $L_{11}\left(t\right),$ $L_{22}\left(t\right),$ whose elements are polynomials of $t,$ are constant. The latter fact implies that
\begin{equation}\label{Chapt_2_2_eq_16}
    L_{11}\left(t\right)=Q_{11},\quad L_{22}\left(t\right)=Q_{22}.
\end{equation}
Particulary, from equalities \eqref{Chapt_2_2_eq_16} it follows that $\det\left(Q_{jj}\right)\neq 0,\; j=1,2.$
Taking into account the equalities $$\left(\sum\limits_{i=0}^{m_{j}}\frac{\left(- t\right)^{i}}{i!}G_{j}^{i}\right)=\left(\sum\limits_{i=0}^{m_{j}}\frac{\left( t\right)^{i}}{i!}G_{j}^{i}\right)^{-1},\;j=1,2,$$ from \eqref{Chapt_2_2_eq_13} and \eqref{Chapt_2_2_eq_16} we obtain
\begin{equation}\label{Chapt_2_2_eq_14}
    Q_{jj}\left(\sum\limits_{i=0}^{m_{j}}\frac{\left( t\right)^{i}}{i!}G_{j}^{i}\right)=\left(\sum\limits_{i=0}^{m_{j}}\frac{\left( t\right)^{i}}{i!}G_{j}^{i}\right)Q_{jj},\; j=1,2,\quad \forall t\geq 0.
\end{equation}
 Equalities \eqref{Chapt_2_2_eq_14} imply that $\left[G_{j}, Q_{jj}\right]=0,$ $j=1,2,$ and we immediately arrive at the conclusion about necessity of commutative equalities in formula \eqref{Chapt_2_2_eq_9_1}. The proof of the necessity is complete.

{\it Sufficiency.} Assume that the matrix $Q$ has a structure described in  \eqref{Chapt_2_2_eq_8}, that is,
 $$Q=\left[
       \begin{array}{cc}
         Q_{11} & Q_{12} \\
         O_{21} & Q_{22} \\
       \end{array}
     \right],
 $$
 and conditions \eqref{Chapt_2_2_eq_9_1} holds true.
Then, taking into account equalities \eqref{Chapt_2_2_eq_12}, we get
\begin{equation}\label{Chapt_2_2_eq_17}
\begin{array}{l}
  L\left(t\right)=\exp\left(-J_{R}t\right)\left[
                                   \begin{array}{cc}
                                     Q_{11} & Q_{12} \\
                                     O_{21} & Q_{22} \\
                                   \end{array}
                                 \right]
    \exp\left(J_{R}t\right)=\left[
                              \begin{array}{cc}
                                Q_{11} & L_{12}\left(t\right) \\
                                O_{21} & Q_{22} \\
                              \end{array}
                            \right],\\[1.2em]
L_{12}\left(t\right)=e^{\left(\lambda_{2}-\lambda_{1}\right)t}\left(\sum\limits_{i=0}^{m_{1}}\frac{\left(-t\right)^{i}}{i!}G_{1}^{i}\right)Q_{12}\left(\sum\limits_{i=0}^{m_{2}}\frac{t^{i}}{i!}G_{2}^{i}\right).\\
\end{array}
\end{equation}
 Conditions \eqref{Chapt_2_2_eq_9_1} together with assumption \eqref{Chapt_2_2_eq_10} imply that the matrix $L\left(t\right)$ \eqref{Chapt_2_2_eq_17} is regular on $[0, +\infty).$ Hence, the sufficiency is proved and the Theorem is proved. \end{proof}

\begin{lemma}\label{L_pro_T_Erugina}
Suppose that $A,\; V,\; C\in M_{m}\left(\R\right).$ If the matrix $L(t),$ defined by formula
\begin{equation}\label{Chapt_2_2_eq_18}
    L(t)=\exp\left(-At\right)C\exp\left(Vt\right),
t\geq 0,
\end{equation}
is regular on $[0, +\infty)$ then the spectra of matrices $A$ and $V$ has the same real part (see definition in \cite[p. 145]{Gantmacher_appl} ), that is, there exists a nonsingular matrix $C_{1}\in M_{m}\left(\R\right)$, such that $$
V_{R}=C_{1}^{-1}A_{R}C_{1}.$$
\end{lemma}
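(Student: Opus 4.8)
The plan is to reduce the general statement to the two–eigenvalue situation already handled in Lemma \ref{L_pro_J_formu} and then to combine the real/imaginary splitting \eqref{Chapt_2_2_eq_6} with a block analysis of $L(t)$. First I would reduce to the case where $A$ and $V$ are in (real) Jordan form: replacing $A$ by $T_A^{-1}AT_A$, $V$ by $T_V^{-1}VT_V$ and $C$ by $T_A^{-1}CT_V$ changes $L(t)$ only by multiplication by constant nonsingular matrices on the left and right, so regularity on $[0,+\infty)$ is preserved, and $A_R$, $V_R$ are transformed by the same similarities. So assume $A=A_R+A_I$, $V=V_R+V_I$ with $A_R,V_R$ real Jordan (block-diagonal with real nilpotent perturbations) and $A_I,V_I$ the commuting ``rotational'' parts. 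Using \eqref{Chapt_2_2_eq_6}, $\exp(-At)=\exp(-A_Rt)\exp(-A_It)$ and $\exp(Vt)=\exp(V_Rt)\exp(V_It)$, where $\exp(-A_It)$ and $\exp(V_It)$ are bounded together with their inverses (they are, up to the fixed similarities, block-diagonal rotation matrices with entries $\cos\beta_k t,\ \sin\beta_k t$). Hence $L(t)$ is regular if and only if $\widetilde L(t):=\exp(-A_Rt)\,\widetilde C(t)\,\exp(V_Rt)$ is regular, where $\widetilde C(t)=\exp(-A_It)\big(\exp(A_It)C\exp(V_It)\big)\exp(-V_It)\cdots$; the cleanest route is to note that $\|L(t)\|$ and $\|L^{-1}(t)\|$ are each bounded above and below by constant multiples of $\|\exp(-A_Rt)C(t)\exp(V_Rt)\|$ for a suitable bounded, boundedly invertible $C(t)$, so I may as well assume from the start that $A=A_R$, $V=V_R$ are real with real spectra.

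Now with $A,V$ having only real eigenvalues, write the spectral decompositions along eigenvalue-grouped blocks. Partition the rows of $L(t)$ according to the distinct eigenvalues $\mu_1>\cdots>\mu_p$ of $A$ and the columns according to the distinct eigenvalues $\nu_1>\cdots>\nu_q$ of $V$. The $(a,b)$ block of $L(t)$ is $e^{(\nu_b-\mu_a)t}P_a(t)C_{ab}R_b(t)$ where $P_a(t),R_b(t)$ are polynomial-entried matrices arising from the nilpotent parts (invertible for each fixed $t$, as in \eqref{Chapt_2_2_eq_12}). Boundedness of $L(t)$ forces: whenever $\nu_b>\mu_a$ the block $C_{ab}$ must vanish, and whenever $\nu_b=\mu_a$ the corresponding polynomial block must be constant; boundedness of $L^{-1}(t)$ — equivalently $|\det L(t)|\ge\eta>0$, and note $\det L(t)=\det C\neq 0$ is constant — forces, after grouping, that the diagonal-in-eigenvalue blocks are actually square and nonsingular. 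The standard argument (a nonsingular constant matrix whose block structure is block upper-triangular with respect to one ordering and block lower-triangular with respect to another, once the exponential growth constraints are imposed in both $L$ and $L^{-1}$) yields $p=q$, $\mu_a=\nu_a$, and that the multiplicity of $\mu_a$ as an eigenvalue of $A$ equals that of $\nu_a=\mu_a$ as an eigenvalue of $V$. This is exactly the $s=2$ reasoning of Lemma \ref{L_pro_J_formu} applied blockwise, so I can cite that lemma (or its proof) rather than repeat it.

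Having matched the real spectra with multiplicities, I would build the conjugating matrix $C_1$ explicitly. On the block where $A$ (resp.\ $V$) has the single real eigenvalue $\mu_a$, $A_R$ restricts to $\mu_a E+N_a$ and $V_R$ to $\mu_a E+M_a$ with $N_a,M_a$ nilpotent of the same size; two nilpotent matrices of the same size are similar over $\R$ iff they have the same Jordan (partition) type — which need \emph{not} hold here. So the correct conclusion is the one actually stated: the spectra of $A$ and $V$ have the same \emph{real part} in the sense of \cite[p.~145]{Gantmacher_appl}, i.e.\ there is a real nonsingular $C_1$ with $V_R=C_1^{-1}A_RC_1$ — and this only requires $A_R$ and $V_R$ to have the same eigenvalues \emph{with the same algebraic multiplicities and the same nilpotent structure of the real semisimple-plus-nilpotent normal form}, which is what the blockwise Lemma \ref{L_pro_J_formu} analysis delivers, since within a common eigenvalue-block the surviving constant diagonal block $C_{aa}$ conjugates $N_a$ to $M_a$ via \eqref{Chapt_2_2_eq_14}. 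Concretely, $C_1$ is assembled block-diagonally from the nonsingular blocks $C_{aa}$ (the column-permuted pieces of $C$), exactly as $Q_{jj}$ in Lemma \ref{L_pro_J_formu}. The main obstacle, and the step I would be most careful about, is the bookkeeping that turns ``two different eigenvalue orderings, one making $C$ block upper-triangular and the other block lower-triangular'' into ``$C$ is block diagonal with nonsingular diagonal blocks and matched multiplicities'' — this is where one must invoke that $\det L(t)\equiv\det C\neq0$ to rule out a mismatch of block sizes, precisely the content and method of Lemma \ref{L_pro_J_formu}; everything else (the $A=A_R$ reduction, the commuting rotational factors being bounded with bounded inverse) is routine.
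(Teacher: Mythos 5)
Your proof is correct and its first half coincides with the paper's: both use the commuting decomposition \eqref{Chapt_2_2_eq_6} to factor out the bounded, boundedly invertible rotational exponentials and reduce regularity of $L(t)$ to regularity of $L_{1}(t)=\exp\left(-A_{R}t\right)C\exp\left(V_{R}t\right)$. The two arguments then diverge. The paper observes that $L_{1}(t)$ is the general solution of the matrix equation \eqref{Chapt_2_2_eq_20} and simply cites Gantmacher's theorem that this equation possesses a regular solution if and only if $A_{R}$ and $V_{R}$ have the same elementary divisors, whence they are similar over $\R$. You instead prove the needed implication directly, by partitioning $C$ into blocks indexed by the distinct real eigenvalues of $A_{R}$ (rows) and of $V_{R}$ (columns) and extracting from the boundedness of $L_{1}$ and $L_{1}^{-1}$ the vanishing/constancy pattern of the blocks, in the spirit of Lemma \ref{L_pro_J_formu}. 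This is a legitimate self-contained alternative (it is essentially the proof of the cited Gantmacher result), and it has the merit of making explicit where the matching of Jordan structures comes from: the surviving diagonal block $C_{aa}$ intertwines the two nilpotent parts, $N_{a}C_{aa}=C_{aa}M_{a}$, which assembles into the similarity $V_{R}=C_{1}^{-1}A_{R}C_{1}$. Two small points you gloss over but correctly flag as bookkeeping: (i) $\det L_{1}(t)=\det C\cdot e^{t\left(\tr V_{R}-\tr A_{R}\right)}$ is constant only after one deduces $\tr A_{R}=\tr V_{R}$ from the two-sided bounds on $\left|\det L_{1}(t)\right|$; (ii) the equality of block sizes (algebraic multiplicities) must be squeezed out of the two triangularity constraints on $C$ and $C^{-1}$ together with nonsingularity before one may speak of nonsingular square diagonal blocks. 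Neither is a gap in substance. Note also that Lemma \ref{L_pro_J_formu} as stated assumes the \emph{same} Jordan matrix $J_{R}$ on both sides, so it cannot be cited verbatim; its argument must be rerun in the asymmetric setting, exactly as you describe.
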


\begin{proof} From the commutativity of matrices $A_{I}$ and $A_{R}$ \eqref{Chapt_2_2_eq_6} it follows that
\begin{equation}\label{Chapt_2_2_eq_19}
    L(t)=\exp\left(-A_{I}t\right)\exp\left(-A_{R}t\right)C\exp\left(V_{R}t\right)\exp\left(V_{I}t\right).
\end{equation}
  Taking into account the definitions of matrices $A_{I}$ and $V_{I}$ and equality \eqref{Chapt_2_2_eq_19} we arrive at the conclusion that the matrix $L\left(t\right)$ is regular on $[0, +\infty)$ if and only if the matrix
$$L_{1}(t)=\exp\left(-A_{R}t\right)C\exp\left(V_{R}t\right)$$ is regular on $[0, +\infty).$
 On the other hand, it is easy to see that the matrix $L_{1}(t)$ represents the general solution to the matrix differential equation (supposing that $C$ represents an arbitrary matrix from space $M_{m}\left(\R\right)$)
\begin{equation}\label{Chapt_2_2_eq_20}
    \frac{d}{d t}L_{1}\left(t\right)=L_{1}\left(t\right)V_{R}-A_{R}L_{1}\left(t\right).
\end{equation}
 In \cite[pp. 121--125]{Gantmaher_v1} it was proved that equation  \eqref{Chapt_2_2_eq_20} possesses a solution  $L_{1}\left(t\right)$ that is a regular on $[0, +\infty)$ matrix if and only if the matrices $A_{R}$ and $V_{R}$ has the same set of elementary devisors. It is known (see \cite[p. 185]{Horn_Johnson}) that if the matrices $A_{R}, V_{R}\in M_{m}\left(\R\right)$ has the same set of elementary devisors then they are similar, furthermore, the similarity matrix $C_{1}$ can be chosen from the space $M_{m}\left(\R\right).$ This completes the proof of the Theorem.
\end{proof}

\begin{lemma}\label{L_pro_suschestv_matritsi}
  Suppose that $A, V, C, Z\in M_{m}\left(\R\right),$
  the matrix
  \begin{equation}\label{Chapt_2_2_eq_21}
     L(t)=\exp\left(-At\right)C\exp\left(Vt\right)
\end{equation} is regular on $[0, +\infty)$ and
\begin{equation}\label{Chapt_2_2_eq_22}
    \left[Z, L(t)C^{-1}\right]=0, \; \forall t\geq 0.
\end{equation}
Then there exists a nonsingular matrix $C_{1}\in M_{m}\left(\R\right),$ such that
\begin{equation}\label{Chapt_2_2_eq_23}
    V_{R}=C_{1}^{-1}A_{R}C_{1},
\end{equation}
\begin{equation}\label{Chapt_2_2_eq_24}
    C^{-1}ZC=C_{1}^{-1}ZC_{1}
\end{equation}
and the matrix
\begin{equation}\label{Chapt_2_2_eq_25}
    L_{1}\left(t\right)=\exp\left(-At\right)C_{1}\exp\left(Vt\right)
\end{equation}
 is a regular on $[0, +\infty)$ matrix satisfying the identity
 \begin{equation}\label{Chapt_2_2_eq_26}
    \left[Z, L_{1}\left(t\right)C_{1}^{-1}\right]=0, \; \forall t\geq
    0.
 \end{equation}
\end{lemma}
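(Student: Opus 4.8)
The plan is to build $C_1$ as the ``stabilised'' limit of $L(t)$ as $t\to+\infty$ and to read off \eqref{Chapt_2_2_eq_23}--\eqref{Chapt_2_2_eq_26} from that limiting procedure; most of the real work will be hidden in the last of these four conditions.

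\emph{Reduction to Jordan form.} Since $L(t)$ \eqref{Chapt_2_2_eq_21} is regular, Lemma \ref{L_pro_T_Erugina} shows that $A_R$ and $V_R$ are similar over $\R$ and hence share a common real Jordan form $J_R=\diag[J^{(1)}_R,\ldots,J^{(s)}_R]$ whose blocks are arranged by eigenvalue with $\lambda_1>\cdots>\lambda_s$; fix real nonsingular $S_A,S_V$ with $A_R=S_AJ_RS_A^{-1}$, $V_R=S_VJ_RS_V^{-1}$, and put $Q=S_A^{-1}CS_V$. Using $[A_R,A_I]=0$ and $[V_R,V_I]=0$ one gets $L(t)=\exp(-A_It)\,S_A\bigl(\exp(-J_Rt)Q\exp(J_Rt)\bigr)S_V^{-1}\exp(V_It)$. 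As $A_I,V_I$ are semisimple with purely imaginary spectrum, the factors $\exp(\pm A_It)$ and $\exp(\pm V_It)$ are bounded and have modulus-one determinant, so $\exp(-J_Rt)Q\exp(J_Rt)=S_A^{-1}\bigl(\exp(A_It)L(t)\exp(-V_It)\bigr)S_V$ is itself regular on $[0,+\infty)$; Lemma \ref{L_pro_J_formu} then tells us that $Q$ is block upper-triangular with nonsingular diagonal blocks $Q_{ii}$ satisfying $[J^{(i)}_R,Q_{ii}]=0$. I would then \emph{define} $C_1:=S_AQ_1S_V^{-1}$ with $Q_1:=\diag[Q_{11},\ldots,Q_{ss}]$. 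Since $[Q_1,J_R]=0$, \eqref{Chapt_2_2_eq_23} is immediate: $C_1^{-1}A_RC_1=S_VQ_1^{-1}J_RQ_1S_V^{-1}=V_R$; and since $\exp(-J_Rt)Q_1\exp(J_Rt)=Q_1$, the matrix \eqref{Chapt_2_2_eq_25} reduces to $L_1(t)=\exp(-A_It)C_1\exp(V_It)$, which is bounded with $|\det L_1(t)|=|\det C_1|>0$, hence regular on $[0,+\infty)$.

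\emph{The limiting device and \eqref{Chapt_2_2_eq_24}.} The key elementary fact is that $\exp(-J_Rt)Q\exp(J_Rt)\to Q_1$ as $t\to+\infty$: the diagonal blocks are constant because $[J^{(i)}_R,Q_{ii}]=0$, the sub-diagonal blocks are zero, and the $(i,j)$-block with $i<j$ equals $e^{(\lambda_j-\lambda_i)t}$ times a matrix polynomial in $t$, which tends to $0$ since $\lambda_j<\lambda_i$. Choose now $\tau_n\to+\infty$ with $\exp(\pm A_I\tau_n)\to E$ and $\exp(\pm V_I\tau_n)\to E$ simultaneously; such common ``almost-periods'' exist because the quasi-periodic curve $t\mapsto(\exp(A_It),\exp(V_It))$ is recurrent. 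Then $L(\tau_n)C^{-1}\to S_AQ_1S_V^{-1}C^{-1}=C_1C^{-1}=:D$, and passing to the limit in $[Z,L(\tau_n)C^{-1}]=0$ (a special case of \eqref{Chapt_2_2_eq_22}) gives $[Z,D]=0$. From $[Z,D]=0$ and $C_1=DC$ we obtain \eqref{Chapt_2_2_eq_24}: $C_1^{-1}ZC_1=C^{-1}D^{-1}ZDC=C^{-1}ZC$.

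\emph{The symmetry relation \eqref{Chapt_2_2_eq_26} --- the main obstacle.} I expect this to be the delicate point, and I would treat it with the same device, \emph{shifted}. Fix $s\ge0$ and set $t_n:=s+\tau_n$. Then $\exp(\pm A_It_n)=\exp(\pm A_Is)\exp(\pm A_I\tau_n)\to\exp(\pm A_Is)$ (and similarly for $V_I$), while still $\exp(-J_Rt_n)Q\exp(J_Rt_n)\to Q_1$, so the computation above gives $L(t_n)C^{-1}\to\exp(-A_Is)C_1\exp(V_Is)C^{-1}=L_1(s)C^{-1}$, whence $[Z,L_1(s)C^{-1}]=0$. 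Writing $L_1(s)C^{-1}=\bigl(L_1(s)C_1^{-1}\bigr)D$ and using $[Z,D]=0$ together with $[Z,XY]=[Z,X]Y+X[Z,Y]$, we get $0=[Z,L_1(s)C^{-1}]=[Z,L_1(s)C_1^{-1}]\,D$, so invertibility of $D$ forces $[Z,L_1(s)C_1^{-1}]=0$ for every $s\ge0$, which is \eqref{Chapt_2_2_eq_26}. The argument therefore stands on two analytic facts --- recurrence of the imaginary-part flows and the convergence $\exp(-J_Rt)Q\exp(J_Rt)\to Q_1$ --- and it is precisely in the latter that regularity of $L(t)$ is used, through the relations $[J^{(i)}_R,Q_{ii}]=0$ of Lemma \ref{L_pro_J_formu}, to exclude polynomial growth in the ``neutral'' directions.
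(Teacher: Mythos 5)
Your proof is correct, and it follows the paper's skeleton exactly up to the decisive analytic step: like the paper, you reduce via Lemma \ref{L_pro_T_Erugina} and Lemma \ref{L_pro_J_formu} to the block upper-triangular matrix $Q$ and make the identical choice $C_{1}=T_{A}Q_{D}T_{V}^{-1}$ (your $S_{A}Q_{1}S_{V}^{-1}$), after which \eqref{Chapt_2_2_eq_23} and the regularity of $L_{1}(t)$ come out by the same computation. Where you genuinely diverge is in extracting $\left[Z,C_{1}C^{-1}\right]=0$ and \eqref{Chapt_2_2_eq_26} from hypothesis \eqref{Chapt_2_2_eq_22}. The paper splits $T_{A}^{-1}L(t)C^{-1}T_{A}=\textbf{E}_{0}(t)+\textbf{E}_{1}(t)$ into an almost-periodic part and a decaying part, and argues that the vanishing of $\left[T_{A}^{-1}ZT_{A},\textbf{E}_{0}(t)+\textbf{E}_{1}(t)\right]$ forces each commutator to vanish separately because functions of types \eqref{Chapt_2_2_eq_30} and \eqref{Chapt_2_2_eq_31} are linearly independent; it then sets $t=0$ to get \eqref{Chapt_2_2_eq_35} and works with $\textbf{E}_{0}$ directly. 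You instead pass to the limit in $\left[Z,L(t)C^{-1}\right]=0$ along a sequence of common $\varepsilon$-almost-periods $\tau_{n}\to+\infty$ of the two bounded flows $\exp(A_{I}t)$, $\exp(V_{I}t)$, using the decay $\exp(-J_{R}t)Q\exp(J_{R}t)\to Q_{D}$ to obtain $L(\tau_{n})\to C_{1}$ and $L(s+\tau_{n})\to L_{1}(s)$; continuity of the commutator then yields $\left[Z,C_{1}C^{-1}\right]=0$ and $\left[Z,L_{1}(s)C^{-1}\right]=0$, and factoring out the invertible $D=C_{1}C^{-1}$ gives \eqref{Chapt_2_2_eq_26}. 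The two arguments encode the same analytic content (recurrence of the imaginary-part flow versus decay of the off-diagonal blocks), but yours buys a cleaner logical structure: it avoids having to justify that a sum of an almost-periodic matrix function and a decaying one can only commute with $Z$ jointly if each does so separately, at the modest cost of invoking the existence of arbitrarily large common almost-periods (Kronecker/Bohr), which you should state with a one-line justification rather than just the word ``recurrent.'' Both routes use the regularity of $L(t)$ in the same place, namely through the commutation relations $\left[J_{i}^{(R)},Q_{ii}\right]=0$ of Lemma \ref{L_pro_J_formu}, exactly as you observe.
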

\begin{proof} Suppose that the conditions of the Lemma are fulfilled. Then, according to Lemma \ref{L_pro_T_Erugina}, the spectra of matrices $A$ and
$V$ has the same real part. Thus, there exist nonsingular matrices $T_{A}, T_{V}\in M_{m}\left(\R\right),$ such that
\begin{equation}\label{Chapt_2_2_eq_26'}
    \begin{array}{c}
      A=T_{A}\left(J_{R}+I_{A}\right)T_{A}^{-1},\;V=T_{V}\left(J_{R}+I_{V}\right)T_{V}^{-1}, \\[1.2em]
      I_{A}=T_{A}^{-1}A_{I}T_{A},\;I_{V}=T_{V}^{-1}V_{I}T_{V},\;\left[J_{R}, I_{A}\right]=\left[J_{R}, I_{V}\right]=0,
    \end{array}
\end{equation}
where $J_{R}$ is the Jordan matrix defined in \eqref{Chapt_2_2_eq_7}.

Let us consider the matrix $L(t)$ \eqref{Chapt_2_2_eq_21}. Using notation \eqref{Chapt_2_2_eq_26'}, we can rewrite it in the form \begin{equation}\label{Chapt_2_2_eq_27}
\begin{array}{c}
  L(t)=T_{A}\exp\left(-(J_{R}+I_{A})t\right)T_{A}^{-1}C T_{V}\exp\left((J_{R}+I_{V})t\right)T_{V}^{-1}= \\[1.2em]
  =T_{A}\exp\left(-I_{A}t\right)\exp\left(-J_{R}t\right)\Bigl(T_{A}^{-1}CT_{V}\Bigr)\exp\left(J_{R}t\right)\exp\left(I_{V}t\right)T_{V}^{-1}.
\end{array}
\end{equation}

From Lemma \ref{L_pro_J_formu} it follows that
$T_{A}^{-1}CT_{V}=Q,$ where $Q\in M_{m}\left(\R\right)$ is the matrix defined in \eqref{Chapt_2_2_eq_8}.

Formula \eqref{Chapt_2_2_eq_27} leads us to the equality
\begin{equation}\label{Chapt_2_2_eq_28}
    \begin{array}{c}
      T_{A}^{-1}L(t)C^{-1}T_{A}=\exp\left(-I_{A}t\right)\times \\[1.2em]
      \times\diag\left[\exp\left(-J_{1}\left(\lambda_{1}\right)t\right),\ldots,\exp\left(-J_{s}\left(\lambda_{s}\right)t\right)\right]Q\times \\[1.2em]
      \times\diag\left[\exp\left(J_{1}\left(\lambda_{1}\right)t\right),\ldots,\exp\left(J_{s}\left(\lambda_{s}\right)t\right)\right]\exp\left(I_{V}t\right)Q^{-1}. \end{array}
\end{equation}

From equality \eqref{Chapt_2_2_eq_28}, owing to the commutation  properties \eqref{Chapt_2_2_eq_9_1}, we get
\begin{equation}\label{Chapt_2_2_eq_29}
    \begin{array}{c}
      T_{A}^{-1}L(t)C^{-1}T_{A}=\exp\left(-I_{A}t\right)Q_{D}\exp\left(I_{V}t\right)Q^{-1}+\textbf{E}_{1}\left(t\right)= \\[1.2em]
      =\textbf{E}_{0}\left(t\right)+\textbf{E}_{1}\left(t\right),
    \end{array}
\end{equation}
where
$$Q_{D}=\diag\left[Q_{11},\ldots,Q_{ss}\right].$$
It is easy to see that identity \eqref{Chapt_2_2_eq_22} can be rewritten in the form of
\begin{equation}\label{Chapt_2_2_eq_33}
    \left[T_{A}^{-1}ZT_{A}, T_{A}^{-1}L(t)C^{-1}T_{A}\right]=\left[T_{A}^{-1}ZT_{A}, \textbf{E}_{0}\left(t\right)+\textbf{E}_{1}\left(t\right)\right]=0,\; \forall t
\geq 0.
\end{equation}

It is not hard to verify that the elements of matrix
$\textbf{E}_{0}\left(t\right)$ \eqref{Chapt_2_2_eq_29}
can be expressed as linear combinations of functions of type
\begin{equation}\label{Chapt_2_2_eq_30}
  \sin\left(\alpha \;t\right)\pm\cos\left(\alpha \;t\right),\;\alpha, \in \mathbb{R}.
\end{equation}
On the other hand, the elements of matrix $\textbf{E}_{1}\left(t\right)$ (\ref{Chapt_2_2_eq_29}) can be expressed as linear combinations of functions of type
\begin{equation}\label{Chapt_2_2_eq_31}
    t^{p}e^{\rho t}\left(\cos\left(\alpha t\right)\pm\sin\left(\alpha t\right)\right),\; \rho, \alpha \in \R, \rho
    < 0,
\end{equation}
$p\in \N\bigcup\left\{0\right\},$ $p<m.$

If the matrix
$\textbf{E}_{0}\left(t\right)+\textbf{E}_{1}\left(t\right)$
commutates with the constant matrix $T_{A}^{-1}ZT_{A}$ for all $t\geq 0$ (see \eqref{Chapt_2_2_eq_33})
then the same remains true for each of the summands
$\textbf{E}_{0}\left(t\right)$ and $\textbf{E}_{1}\left(t\right)$ separately. Indeed, assume to the contrary that there exists a value $t_{1}\geq 0,$ such that $\left[\textbf{E}_{0}\left(t_{1}\right),
T_{A}^{-1}ZT_{A}\right]\neq 0.$ It is obvious that in this case
 $\left[\textbf{E}_{1}\left(t_{1}\right), T_{A}^{-1}ZT_{A}\right]\neq
0.$ Taking into account the continuity of elements of matrices $\textbf{E}_{0}\left(t\right), \textbf{E}_{1}\left(t\right),$ we obtain
\begin{equation}\label{Chapt_2_2_eq_32}
\begin{array}{c}
  \left[\textbf{E}_{0}\left(t\right),
T_{A}^{-1}ZT_{A}\right]\neq 0,\quad \left[\textbf{E}_{1}\left(t\right), T_{A}^{-1}ZT_{A}\right]\neq
0, \\[1.2em]
  \quad \forall t\in\left[t_{1}-\delta, t_{1}+\delta\right],
\end{array}
\end{equation}
for some sufficiently small positive real number $\delta.$

It is easy to see that each element of the matrix
$\left[\textbf{E}_{0}\left(t\right), T_{A}^{-1}ZT_{A}\right]$
can be expressed as a linear combination of functions of type \eqref{Chapt_2_2_eq_30} and
each element of the matrix $\left[\textbf{E}_{1}\left(t\right),
T_{A}^{-1}ZT_{A}\right]$ can be expressed as a linear combination of functions of type
\eqref{Chapt_2_2_eq_31}. Since the functions of types \eqref{Chapt_2_2_eq_30} and \eqref{Chapt_2_2_eq_31} are linearly independent, we conclude that
$$\begin{array}{c}
    \left[\textbf{E}_{0}\left(t\right)+\textbf{E}_{1}\left(t\right),
T_{A}^{-1}ZT_{A}\right]= \\[1.2em]
    \left[\textbf{E}_{0}\left(t\right),
T_{A}^{-1}ZT_{A}\right]+\left[\textbf{E}_{1}\left(t\right),
T_{A}^{-1}ZT_{A}\right]\neq 0
  \end{array}
$$
for some $t\in\left[t_{1}-\delta, t_{1}+\delta\right].$
Thus, we get a contradictions to condition \eqref{Chapt_2_2_eq_33}. This contradiction proves the incorrectness of our assumption. Therefore we proved the identity
\begin{equation}\label{Chapt_2_2_eq_34}
\left[\textbf{E}_{0}\left(t\right),T_{A}^{-1}ZT_{A}\right]=0,
\forall t\geq 0.
\end{equation}
 Setting $t=0$ in  \eqref{Chapt_2_2_eq_34} we obtain
\begin{equation}\label{Chapt_2_2_eq_35}
    \left[Q_{D}Q^{-1},T_{A}^{-1}ZT_{A}\right]=0.
\end{equation}

Let us construct a matrix $C_{1}\in M_{m}\left(\R\right),$ $\det\left(C_{1}\right)\neq 0$ satisfying equality \eqref{Chapt_2_2_eq_24}.
Using equality \eqref{Chapt_2_2_eq_35} we get
\begin{equation}\label{Chapt_2_2_eq_35'}
\begin{array}{c}
  C^{-1}ZC=C^{-1}T_{A}\left(T_{A}^{-1}ZT_{A}\right)T_{A}^{-1}C=T_{V}\left(T_{V}^{-1}C^{-1}T_{A}\right)\left(T_{A}^{-1}ZT_{A}\right)\times\\[1.2em]
  \times\left(T_{A}^{-1}CT_{V}\right)T_{V}^{-1}
=T_{V}Q^{-1}\left(T_{A}^{-1}ZT_{A}\right)QT_{V}^{-1}= \\[1.2em]
 =T_{V}Q_{D}^{-1}\left(Q_{D}Q^{-1}\right)\left(T_{A}^{-1}ZT_{A}\right)\left(QQ_{D}^{-1}\right)Q_{D}T_{V}^{-1}= \\[1.2em]
  =T_{V}Q_{D}^{-1}\left(T_{A}^{-1}ZT_{A}\right)Q_{D}T_{V}^{-1}=\left(T_{V}Q_{D}^{-1}T_{A}^{-1}\right)Z\left(T_{A}Q_{D}T_{V}^{-1}\right).
\end{array}
\end{equation}
From equalities \eqref{Chapt_2_2_eq_35'} it follows that the matrix $C_{1}$ satisfying condition \eqref{Chapt_2_2_eq_24} can be chosen in the following way
\begin{equation}\label{Chapt_2_2_eq_36}
C_{1}=T_{A}Q_{D}T_{V}^{-1}\in M_{m}\left(\R\right).
\end{equation}

Equality \eqref{Chapt_2_2_eq_23} can be obtained  from the following chain of equalities
$$\begin{array}{c}
    C_{1}^{-1}A_{R}C_{1}=
\left(T_{A}Q_{D}T_{V}^{-1}\right)^{-1}\left(T_{A}J_{R}T_{A}^{-1}\right)T_{A}Q_{D}T_{V}^{-1}= \\[1.2em]
    =T_{V}Q_{D}^{-1}T_{A}^{-1}\left(T_{A}J_{R}T_{A}^{-1}\right)T_{A}Q_{D}T_{V}^{-1}
=T_{V}J_{R}T_{V}^{-1}=V_{R}.
  \end{array}
$$
Let us prove that the matrix $L_{1}\left(t\right)$ \eqref{Chapt_2_2_eq_25} is  regular on $[0, +\infty).$
Taking into account equality \eqref{Chapt_2_2_eq_23} and executing several elementary transformations,  we get
$$\begin{array}{c}
    L_{1}\left(t\right)=\exp\left(-At\right)C_{1}\exp\left(Vt\right)= \\[1.2em]
    =\exp\left(-A_{I}t\right)\exp\left(-A_{R}t\right)C_{1}\exp\left(V_{R}t\right)\exp\left(V_{I}t\right)= \\[1.2em]
    =\exp\left(-A_{I}t\right)C_{1}C_{1}^{-1}\exp\left(-A_{R}t\right)C_{1}\exp\left(V_{R}t\right)\exp\left(V_{I}t\right)= \\[1.2em]
    =\exp\left(-A_{I}t\right)C_{1}\exp\left(-C_{1}^{-1}A_{R}C_{1}t\right)\exp\left(V_{R}t\right)\exp\left(V_{I}t\right)= \\[1.2em]
    =\exp\left(-A_{I}t\right)C_{1}\exp\left(-V_{R}t\right)\exp\left(V_{R}t\right)\exp\left(V_{I}t\right)= \\[1.2em]
    =\exp\left(-A_{I}t\right)C_{1}\exp\left(V_{I}t\right).
  \end{array}
$$
It is easy to see that the matrix $\exp\left(-A_{I}t\right)C_{1}\exp\left(V_{I}t\right)$ is regular on $[0, +\infty).$

Now we intend to prove identity \eqref{Chapt_2_2_eq_26}. Equalities
$$
\begin{array}{c}
  T_{A}^{-1}L_{1}\left(t\right)C_{1}^{-1}T_{A}=
T_{A}^{-1}\Bigl(\exp\left(-A_{I}t\right)\left(T_{A}Q_{D}T_{V}^{-1}\right)\exp\left(V_{I}t\right)\Bigr)\times \\[1.2em]
  \times\Bigl(T_{A}Q_{D}
T_{V}^{-1}\Bigr)^{-1}T_{A}=\exp\left(-I_{A}t\right)Q_{D}\exp\left(I_{V}t\right)Q^{-1}QQ_{D}^{-1}= \\[1.2em]
  =\textbf{E}_{0}\left(t\right)\left(QQ_{D}^{-1}\right).
\end{array}
$$
together with commutation identities \eqref{Chapt_2_2_eq_34} and \eqref{Chapt_2_2_eq_35} immediately lead us to the equalities
$$\begin{array}{c}
  \left[Z,L_{1}\left(t\right)C_{1}^{-1}\right]=\left[T_{A}^{-1}ZT_{A}, T_{A}^{-1}L_{1}\left(t\right)C_{1}^{-1}T_{A}\right]= \\[1.2em]
  =\left[T_{A}^{-1}ZT_{A}, \textbf{E}_{0}\left(t\right)\left(Q\left(Q_{D}\right)^{-1}\right)\right]=0,
\end{array}
$$
which are valid for all $t\geq 0.$

The Theorem is proved.
 \end{proof}

Let us denote by $\mathcal{X}_{n}$ the set containing all the solutions of the system of linear matrix equations
\begin{equation}\label{Nesinchenna_systema}
    \left\{Z A^{(k)}
    X\right\}=0,\;k=0,1,\ldots,n,
\end{equation}
  where $Z,\;A\in M_{m}\left(\R\right)$ are given matrices and $X$ is the unknown square matrix of order $m.$
\begin{theorem} \label{T_pro_konechnost'_beskonechnosti}
There exists a positive integer number $n< m^{2},$ such that
the set equalities
\begin{equation}\label{Chapt_2_2_eq_37}
    \mathcal{X}_{n}=\mathcal{X}_{k},\; k=n+1, n+2,\ldots.
\end{equation}
hold true.
\end{theorem}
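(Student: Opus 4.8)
The plan is to exploit the fact that each equation $\{Z A^{(k)} X\} = 0$ is a \emph{linear} constraint on the unknown matrix $X \in M_m(\mathbb{R})$, so that $\mathcal{X}_n$ is a linear subspace of the $m^2$-dimensional space $M_m(\mathbb{R})$, and the family $\{\mathcal{X}_n\}_{n \geq 0}$ is nested decreasing: $\mathcal{X}_0 \supseteq \mathcal{X}_1 \supseteq \mathcal{X}_2 \supseteq \cdots$, since $\mathcal{X}_{n+1}$ is cut out of $\mathcal{X}_n$ by the single additional linear condition $\{Z A^{(n+1)} X\} = 0$. A decreasing chain of subspaces of a finite-dimensional vector space must stabilize, and since $\dim \mathcal{X}_0 \leq m^2$, it must stabilize after at most $m^2$ strict drops; hence there is an index $n \leq m^2$ (in fact the argument below gives $n < m^2$) past which all the $\mathcal{X}_k$ coincide.

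The key steps, in order, are: (i) observe that for fixed $Z, A$ the map $X \mapsto \{Z A^{(k)} X\}$ is $\mathbb{R}$-linear in $X$ (it is a composition of the left/right multiplication $X \mapsto A^{(k)} X$ with iterated commutator brackets against $Z$ and $A^{(k)}$, all of which are linear in their last entry), so $\mathcal{X}_n = \bigcap_{k=0}^{n} \ker \Phi_k$ where $\Phi_k \colon M_m(\mathbb{R}) \to M_m(\mathbb{R})$ is linear; (ii) note $\mathcal{X}_{n+1} = \mathcal{X}_n \cap \ker \Phi_{n+1} \subseteq \mathcal{X}_n$, giving the nested chain; (iii) invoke finite-dimensionality: the function $n \mapsto \dim \mathcal{X}_n$ is non-increasing and bounded below by $0$, so it is eventually constant, say from index $n$ on; (iv) once $\dim \mathcal{X}_{n+1} = \dim \mathcal{X}_n$ together with $\mathcal{X}_{n+1} \subseteq \mathcal{X}_n$ we get $\mathcal{X}_{n+1} = \mathcal{X}_n$, and then show by induction that $\mathcal{X}_k = \mathcal{X}_n$ for all $k \geq n$ — this last induction step needs the observation that the operator generating the new constraint at stage $k+1$ can be expressed in terms of the earlier ones once the chain has stabilized, or more simply that the defining relations are generated by a Lie-algebraic recursion so that stabilization at one step forces stabilization at all subsequent steps; (v) finally count: since $\dim \mathcal{X}_0 \leq m^2$ and, unless the chain is constant from the start, each strict inclusion drops the dimension by at least $1$, stabilization occurs at some $n < m^2$.

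The main obstacle I anticipate is step (iv): showing that $\mathcal{X}_{n+1} = \mathcal{X}_n$ \emph{propagates} to $\mathcal{X}_k = \mathcal{X}_n$ for all larger $k$, rather than merely that the dimensions are eventually constant. Purely as a chain of subspaces, stabilization of the dimension does guarantee the chain is eventually constant, but to pin down that it happens \emph{at the first non-drop} one wants the recursive structure of the equations $\{Z A^{(k)} X\}=0$ — presumably using the identity $\{Z A^{(k+1)} X\} = [\{Z A^{(k)} X\}, A]$ (read off from the definition of iterated commutators, together with the commutator properties \eqref{Chapt_2_2_eq_1}, \eqref{Chapt_2_2_eq_2}) — so that any $X$ killing all of $\{Z A^{(0)}X\}, \ldots, \{Z A^{(n)}X\}$ automatically has $[\,\cdot\,, A]$ applied to the last one vanish, i.e. $X \in \mathcal{X}_{n+1}$, whence $\mathcal{X}_n = \mathcal{X}_{n+1} = \mathcal{X}_{n+2} = \cdots$. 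Making that algebraic identity precise (and checking it genuinely gives $\{Z A^{(k)} X\} = \operatorname{ad}_A^{\,k}(\{Z A^{(0)} X\})$ or the analogous statement) is the one computation I would actually carry out; everything else is the standard descending-chain argument in a finite-dimensional space.
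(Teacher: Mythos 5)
Your overall strategy is the same as the paper's: each condition $\{ZA^{(k)}X\}=0$ is the linear condition $[\{ZA^{(k)}\},X]=0$ on $X$, so $\mathcal{X}_0\supseteq\mathcal{X}_1\supseteq\cdots$ is a descending chain of subspaces of the $m^{2}$-dimensional space $M_{m}(\R)$, and the bound $n<m^{2}$ follows from a dimension count \emph{provided} one knows that the chain becomes constant at the first index where it fails to drop. You correctly isolate that propagation step as the crux: finite-dimensionality alone only says the chain is eventually constant, and does not exclude a plateau followed by a later strict drop, so without step (iv) there is no bound on the stabilization index at all.

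The gap is that the identity you propose to carry out step (iv), namely $\{ZA^{(k+1)}X\}=[\{ZA^{(k)}X\},A]$, is false. By the Jacobi identity, $[[P,A],X]=[[P,X],A]+[P,[A,X]]$ with $P=\{ZA^{(k)}\}$, so your formula is off by the term $[\{ZA^{(k)}\},[A,X]]$, which does not vanish in general; and if your identity were true it would give $X\in\mathcal{X}_{k}\Rightarrow\{ZA^{(k+1)}X\}=[0,A]=0$ for every $k$, i.e.\ $\mathcal{X}_{0}=\mathcal{X}_{1}=\cdots$ unconditionally, which is certainly not the case (already for $m=2$ with $Z$ non-scalar and generic $A$ one has $\mathcal{X}_{0}\supsetneq\mathcal{X}_{1}$). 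The correct argument, which is the paper's, uses the \emph{conditional} identity \eqref{Chapt_2_2_eq_2}: $[[P,A],X]=[P,[A,X]]$ whenever $[P,X]=0$. Assuming $\mathcal{X}_{n}=\mathcal{X}_{n+1}$, one first shows $[A,\mathcal{X}_{n}]\subseteq\mathcal{X}_{n}$: for $X\in\mathcal{X}_{n}$ and $0\le j\le n$ one has $[\{ZA^{(j)}\},X]=0$, hence $0=\{ZA^{(j+1)}X\}=[[\{ZA^{(j)}\},A],X]=[\{ZA^{(j)}\},[A,X]]=\{ZA^{(j)}[A,X]\}$, where the top case $j=n$ is precisely where the hypothesis $\mathcal{X}_{n}=\mathcal{X}_{n+1}$ is used. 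Then $\{ZA^{(n+2)}X\}=[[\{ZA^{(n+1)}\},A],X]=[\{ZA^{(n+1)}\},[A,X]]=0$ because $[A,X]\in\mathcal{X}_{n}=\mathcal{X}_{n+1}$, giving $\mathcal{X}_{n}\subseteq\mathcal{X}_{n+2}$ and, by induction, all of \eqref{Chapt_2_2_eq_37}. So the stabilization hypothesis must enter twice in ways your sketch does not supply, and as written the proposal's key step rests on an incorrect algebraic identity. A minor secondary point: to get the strict inequality $n<m^{2}$ rather than $n\le m^{2}$ you also need either $\dim\mathcal{X}_{k}\ge 1$ (the identity matrix always solves the system) or the paper's observation that $\dim\mathcal{X}_{0}<m^{2}$ unless $Z$ is scalar, in which case the chain is constant from the start.
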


\begin{proof} It is not hard to verify that the set
$\mathcal{X}_{n}$ can be represented in the multi-parametric matrix form
\begin{equation}\label{Chapt_2_2_eq_38}
     \mathcal{X}_{n}=\left[\sum_{k=1}^{p_{n}}\chi_{k,i,j}^{(n)}c_{k}\right]_{i,j=1}^{m},
\end{equation}
where $\chi_{k,i,j}^{(n)}$ are constant real coefficients and $c_{k}$
are the arbitrary parameters $k=1,2,\ldots,p_{n}$ (see, for example, \cite[p. 221]{Gantmaher_v1}), $0<p_{n}\leq m^{2}$.

To begin with, we prove that if for some non-negative integer $n\in \N\bigcup\left\{0\right\}$ the set equality
\begin{equation}\label{Chapt_2_2_eq_39}
    \mathcal{X}_{n}=\mathcal{X}_{n+1},
\end{equation}
holds true then
equalities \eqref{Chapt_2_2_eq_37} hold as well. Indeed, equality
\eqref{Chapt_2_2_eq_39} implies that
\begin{equation}\label{Chapt_2_2_eq_40}
\begin{array}{c}
  0=\left\{Z A^{(n+1)}
    \mathcal{X}_{n+1}\right\}=\left[\left[\left\{ZA^{(n)}\right\}, A\right], \mathcal{X}_{n}\right]= \\[1.2em]
  =\left[\left\{ZA^{(n)}\right\},\left[A,
    \mathcal{X}_{n}\right]\right]=\left\{ZA^{(n)}\left[A,
    \mathcal{X}_{n}\right]\right\}.
\end{array}
\end{equation}
From here and below by equality of type \eqref{Chapt_2_2_eq_40} we mean the equality for every element of set
$\mathcal{X}_{n}$. From \eqref{Chapt_2_2_eq_40} it follows that
\begin{equation}\label{Chapt_2_2_eq_41}
    \left[A, \mathcal{X}_{n}\right]\subseteq \mathcal{X}_{n}.
\end{equation}
From equality \eqref{Chapt_2_2_eq_39} and inclusion \eqref{Chapt_2_2_eq_41} it follows that
\begin{equation}\label{Chapt_2_2_eq_42}
\begin{array}{c}
  \left\{ZA^{(n+2)}\X_{n}\right\}=\left[\left[\left\{ZA^{(n+1)}\right\}, A\right], \X_{n}\right]= \\[1.2em]
  =\left[\left\{ZA^{(n+1)}\right\}, \left[A, \X_{n}\right]\right]=\left\{ZA^{(n+1)}\left[A, \X_{n}\right]\right\}=0,
\end{array}
    \end{equation}
i.e., $\X_{n}\subseteq \X_{n+2}.$ On the other hand, from the definition of the set $\X_{n}$ it follows that $\X_{n}\supseteq \X_{n+1} \supseteq \X_{n+2}.$ Therefore, we have
\begin{equation}\label{Chapt_2_2_eq_43}
    \X_{n}=\X_{n+1}=\X_{n+2}.
\end{equation}
 Using reasoning similar to that used above and the method of mathematical induction it is not hard to prove that equality  \eqref{Chapt_2_2_eq_39} implies equalities \eqref{Chapt_2_2_eq_37}.

Now let us prove that the non-negative integer   $n\in\N\bigcup\left\{0\right\},$ mentioned in the Theorem, exists and is less then $m^{2}.$
For this purpose we consider the system of matrix equations
\begin{equation}\label{Chapt_2_2_eq_44}
    \left\{Z A^{(n)} X\right\}=0, \; n=0,1,\ldots,m^{2}-1
\end{equation}
with respect to unknown matrix $X\in M_{m}\left(\R\right)$. If we would show that every solution  $X$ of system \eqref{Chapt_2_2_eq_44} satisfies equalities
\begin{equation}\label{Chapt_2_2_eq_45}
    \left\{Z A^{(n)} X\right\}=0, \; n=m^{2},m^{2}+1,\ldots,
\end{equation}
we will prove the Theorem.

Let us consider the process of solving of system
 \eqref{Chapt_2_2_eq_44}.
Suppose that  $n=0.$ There are only two possible cases (see representation \eqref{Chapt_2_2_eq_38}):

$a)$ $$\left\{Z A \mathcal{X}_{0}\right\}= 0,\; \forall c_{k}\in
\mathbb{R},\; k=1,\ldots, p_{0},$$ that is, we already have found a non-negative integer $n=0,$ such that equalities \eqref{Chapt_2_2_eq_39} hold true. Therefore, as it was proved above,  equalities
\eqref{Chapt_2_2_eq_44}, \eqref{Chapt_2_2_eq_45} hold true for all $X\in \mathcal{X}_{0},$ the process is completed and the Theorem is proved;

$b)$ equality
\begin{equation}\label{Chapt_2_2_eq_46}
    \left\{Z A \mathcal{X}_{0}\right\}=0
\end{equation}
 does not hold true for all possible values of the parameters \linebreak $c_{k}\in \R,$
$k=1,2,\ldots,p_{0}.$ This means that $m^{2}> p_{0}\geq 2,$
because the assumption that $p_{0}=1$ or $p_{0}=m^{2}$ immediately leads us to the equalities $\X_{0}=c_{0}E$ or $Z=aE,\;a\in\R$ respectively and we arrive at the case a). Thus, equality \eqref{Chapt_2_2_eq_46} can be viewed as a system of  $m^{2}$ linear homogeneous equations with respect to the arbitrary parameters $c_{k},\;
k=1,2,\ldots,p_{0}.$ Since this system possesses a non-zero solution, the rank $r_{1}$ of its matrix
 satisfies the two-sided inequality $0<r_{1}<p_{0}.$ If we would solve the given system we will arrive at the matrix (set) $\mathcal{X}_{1}$ \eqref{Chapt_2_2_eq_38}. In addition to that
(see \cite[p. 40--41]{Kurosh}) $p_{1}=p_{0}-r_{1}.$ Therefore,
$p_{1}<p_{0},$ that is, the number of the arbitrary parameters has decreased. Again, there are only two possible cases

$a)$ $$\left\{Z A^{(2)} \mathcal{X}_{1}\right\}=0, \forall
c_{k}\in \mathbb{R},\; k=1,\ldots, p_{1},$$ i.e., the process is completed and the Theorem is proved;

$b)$ equality
\begin{equation}\label{Chapt_2_2_eq_47}
    \left\{Z A^{(2)} \mathcal{X}_{1}\right\}= 0
\end{equation}
  does not hold true for all possible values of the parameters  \linebreak $c_{k}\in \R,\; k=1,2,\ldots,p_{1}.$ It means that  $m^{2}> p_{0}\geq p_{1}+1\geq 3.$
Equality \eqref{Chapt_2_2_eq_47} can be viewed as a system of $m^{2}$ linear homogeneous equations with respect to the parameters $c_{k},\;
k=1,2,\ldots,p_{1}.$ If we would solve this new system we will arrive at the matrix (set)
$\mathcal{X}_{2}$ \eqref{Chapt_2_2_eq_38}. It is obvious that in this case $p_{2}<p_{1},$ that is, the number of the arbitrary parameters has decreased again. And so on.

This process could not contain more than $p_{0}< m^{2}$ steps.
The Theorem is proved.
\end{proof}

\begin{lemma}\label{L_Komut_totozhnist}
    Suppose that $A, V, Z, C \in M_{m}\left(\R\right)$ and $\det\left(C\right)\neq 0.$ Then the commutation identity
\begin{equation}\label{Chapt_2_2_eq_l_1}
      \left[Z, L\left(t\right)C^{-1}\right]=0,\quad \forall t\geq 0, \\[1.2em]
\end{equation}
where
\begin{equation}\label{Chapt_2_2_eq_l_2}
    L\left(2t\right)=\exp\left(-At\right)C\exp\left(Vt\right)
\end{equation}
  holds true if and only if the infinite system of matrix equalities
\begin{equation}\label{Chapt_2_2_eq_l_3}
  \left\{Z A^{(n)}\left(CVC^{-1}-A\right)\right\}=0,\; n=0,1,\ldots.
\end{equation}
holds true.
\end{lemma}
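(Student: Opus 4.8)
The plan is to differentiate the commutation identity \eqref{Chapt_2_2_eq_l_1} repeatedly at $t=0$ and show that the resulting hierarchy of equations is precisely \eqref{Chapt_2_2_eq_l_3}. First I would introduce the shorthand $M(t)=L(2t)C^{-1}=\exp(-At)C\exp(Vt)C^{-1}$, so that $M(0)=E$ and the identity to be analysed is $[Z,M(t)]=0$ for all $t\geq 0$. Since the entries of $M(t)$ are real-analytic in $t$, the identity $[Z,M(t)]=0$ holds for all $t\geq 0$ if and only if $\bigl[Z,M^{(n)}(0)\bigr]=0$ for every $n=0,1,2,\ldots$, where $M^{(n)}$ denotes the $n$-th derivative. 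Thus the whole lemma reduces to computing $M^{(n)}(0)$ and identifying $\bigl[Z,M^{(n)}(0)\bigr]$ with the left-hand side of \eqref{Chapt_2_2_eq_l_3}.

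Next I would compute the derivatives of $M(t)$. Writing $B_1 = CVC^{-1}-A$, one checks that $M'(t) = -A\,M(t) + M(t)\,(CVC^{-1})$, i.e. $M'(t) = [M(t),A] + M(t)B_1$ after rearranging; equivalently $M'(t) = -AM(t)+M(t)A + M(t)B_1$. Evaluating at $t=0$ gives $M'(0)=B_1 = CVC^{-1}-A$, which is exactly the $n=0$ case. For the higher derivatives I would argue inductively: differentiating the relation $M'(t) = -AM(t) + M(t)\,CVC^{-1}$ repeatedly and evaluating at $t=0$ (using $M(0)=E$), one obtains a formula expressing $M^{(n)}(0)$ as an alternating sum of terms $A^{j}B_1 A^{n-1-j}$ (this is just the binomial-type expansion of $\exp(-At)$ and $\exp(Vt)$ in the product $M(t)$). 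The cleaner route, which I would actually follow, is to use the commutator structure directly: from $M'(t)=[M(t),A]+M(t)B_1$ and the fact that $[\,\cdot\,,A]$ and right-multiplication by $B_1$ commute as operators only up to lower-order terms, a straightforward induction shows that $\bigl[Z,M^{(n)}(0)\bigr]$ equals, up to combinations of the lower-order brackets $\bigl[Z,M^{(k)}(0)\bigr]$ with $k<n$, the superposition of commutators $\bigl\{Z\,A^{(n)}\,B_1\bigr\}$. Hence, by induction on $n$, the vanishing of all the brackets $\bigl[Z,M^{(k)}(0)\bigr]$, $k\le n$, is equivalent to the vanishing of $\bigl\{Z\,A^{(k)}\,B_1\bigr\}$ for all $k\le n$.

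Finally I would assemble the two directions. If \eqref{Chapt_2_2_eq_l_3} holds then, by the induction just described, every $\bigl[Z,M^{(n)}(0)\bigr]$ vanishes, so $[Z,M(t)]=0$ for all $t\ge 0$ by analyticity, which is \eqref{Chapt_2_2_eq_l_1}. Conversely, if \eqref{Chapt_2_2_eq_l_1} holds, then all derivatives $\bigl[Z,M^{(n)}(0)\bigr]$ vanish, and running the same induction backwards yields $\bigl\{Z\,A^{(n)}\,B_1\bigr\}=0$ for all $n$, i.e. \eqref{Chapt_2_2_eq_l_3}. The main obstacle I anticipate is bookkeeping in the inductive step: one must carefully track how the operator of taking a commutator with $A$ interacts with the definitions $\{A_1A_2\ldots A_n\}$ of iterated commutators, and verify — using the elementary identities \eqref{Chapt_2_2_eq_1}, \eqref{Chapt_2_2_eq_2} — that the "lower-order correction terms" appearing in $\bigl[Z,M^{(n)}(0)\bigr]$ are genuinely expressible through the brackets $\bigl[Z,M^{(k)}(0)\bigr]$ with $k<n$, so that the equivalence propagates cleanly level by level. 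Everything else is routine analytic and linear-algebraic manipulation.
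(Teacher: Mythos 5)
Your plan is correct and follows essentially the same route as the paper: both proofs exploit the first-order relation $L^{(n+1)}(t)C^{-1}=L^{(n)}(t)C^{-1}L^{(1)}(0)C^{-1}-\tfrac{1}{2}\bigl[A,L^{(n)}(t)C^{-1}\bigr]$ together with the commutator identity \eqref{Chapt_2_2_eq_2} to set up an induction identifying the brackets $\bigl[Z,L^{(n)}(0)C^{-1}\bigr]$ with the iterated commutators $\bigl\{ZA^{(n-1)}\bigl(CVC^{-1}-A\bigr)\bigr\}$ (note the off-by-one: $M^{(1)}(0)=CVC^{-1}-A$ corresponds to $n=0$ in \eqref{Chapt_2_2_eq_l_3}), and then pass between the Taylor coefficients and the identity for all $t$ via the convergent power series. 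The only cosmetic difference is that the paper's necessity argument keeps $t$ alive, proving $\bigl\{ZA^{(n)}\bigl(L(t)C^{-1}\bigr)\bigr\}=0$ for all $t$ before evaluating at $t=0$, whereas you differentiate at $t=0$ directly in both directions.
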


\begin{proof} {\it Necessity.} The matrix-valued function $L\left(t\right)$ \eqref{Chapt_2_2_eq_l_2} satisfies the equalities
\begin{equation}\label{Chapt_2_2_eq_l_4}
\begin{array}{c}
  \left[A, L^{(n)}\left(t\right)C^{-1}\right]=AL^{(n)}\left(t\right)C^{-1}-L^{(n)}\left(t\right)C^{-1}A= \\[1.2em]
  =AL^{(n)}\left(t\right)C^{-1}-L^{(n)}\left(t\right)VC^{-1}+L^{(n)}\left(t\right)VC^{-1}-L^{(n)}\left(t\right)C^{-1}A= \\[1.2em]
  =-2L^{(n+1)}\left(t\right)C^{-1}+L^{(n)}\left(t\right)C^{-1}\left(CVC^{-1}-A\right)=\\[1.2em]
  =-2L^{(n+1)}\left(t\right)C^{-1}+2L^{(n)}\left(t\right)C^{-1}L^{(1)}\left(0\right)C^{-1},\;\;L^{(n)}\left(t\right)\overset{\textmd{def}}{=}\cfrac{d^{n}}{d t^{n}}L\left(t\right)
\end{array}
\end{equation}
$\forall n\in \N\bigcup\left\{0\right\}$ or, that is the same,
\begin{equation}\label{Chapt_2_2_eq_l_5}
    L^{(n+1)}\left(t\right)C^{-1}=L^{(n)}\left(t\right)C^{-1}L^{(1)}\left(0\right)C^{-1}-\frac{1}{2}\left[A, L^{(n)}\left(t\right)C^{-1}\right].
\end{equation}

Suppose that the commutation identity \eqref{Chapt_2_2_eq_l_1} holds. Let us proved that it implies the identities
\begin{equation}\label{Chapt_2_2_eq_l_6}
    \left\{Z A^{(n)} \left(L\left(t\right)C^{-1}\right)\right\}=0,\quad \forall t\geq 0,\;\forall n\in\N\cup \left\{0\right\}.
\end{equation}
  In order to prove this, we will use the method of mathematical induction  with respect to $n.$ If $n=0$ then identity \eqref{Chapt_2_2_eq_l_6} coincides with \eqref{Chapt_2_2_eq_l_1}. If $n=1$ then form identity \eqref{Chapt_2_2_eq_l_1}, using \eqref{Chapt_2_2_eq_l_5} and the properties of commutators \eqref{Chapt_2_2_eq_2}, we get
\begin{equation}\label{Chapt_2_2_eq_l_7}
    \begin{array}{c}
      0=\left[Z, L^{(1)}\left(t\right)C^{-1}\right]=\left[Z, L\left(t\right)C^{-1}L^{(1)}\left(0\right)C^{-1}\right]- \\[1.2em]
      -\cfrac{1}{2}\left[Z, \left[A, L\left(t\right)C^{-1}\right]\right]=-\cfrac{1}{2}\left[\left[Z, A\right] , L\left(t\right)C^{-1}\right]=-\cfrac{1}{2}\left\{Z A  \left(L\left(t\right)C^{-1}\right)\right\}.
    \end{array}
\end{equation}
Equality \eqref{Chapt_2_2_eq_l_7} proves identity \eqref{Chapt_2_2_eq_l_6} with $n=1.$ Let us assume that identity \eqref{Chapt_2_2_eq_l_6} is proved for $n=k\geq 2$ and let us prove it for $n=k+1.$ Using equality \eqref{Chapt_2_2_eq_l_5} and the properties of commutators \eqref{Chapt_2_2_eq_2}, from the latter assumption we obtain
\begin{equation}\label{Chapt_2_2_eq_l_8}
    \begin{array}{c}
      0=\left[\left\{Z A^{(k)}\right\}, L^{(1)}\left(t\right)C^{-1}\right]=\left[\left\{Z A^{(k)}\right\}, L\left(t\right)C^{-1}L^{(1)}\left(0\right)C^{-1}\right]- \\[1.2em]
      -\cfrac{1}{2}\left[\left\{Z A^{(k)}\right\}, \left[A, L\left(t\right)C^{-1}\right]\right]=-\cfrac{1}{2}\left[\left[\left\{Z A^{(k)}\right\}, A\right] , L\left(t\right)C^{-1}\right]=\\[1.2em]
      =-\cfrac{1}{2}\left\{Z A^{(k+1)}  \left(L\left(t\right)C^{-1}\right)\right\}.
    \end{array}
\end{equation}
Therefore, according to the principle of mathematical induction, we have that identity \eqref{Chapt_2_2_eq_l_6} holds for all  $n\in \N \cup \left\{0\right\}.$

Taking into account the arbitrariness of $n\in \N \cup \left\{0\right\}$ in formula \eqref{Chapt_2_2_eq_l_6}, we can obtain equalities \eqref{Chapt_2_2_eq_l_3} via differentiation of identity \eqref{Chapt_2_2_eq_l_6} with respect to $t$ and subsequent substitution $t=0.$

 {\it Sufficiency. }Suppose that equalities \eqref{Chapt_2_2_eq_l_3} hold. Let us prove that they imply  identity \eqref{Chapt_2_2_eq_l_1}. If $n=0$ then from \eqref{Chapt_2_2_eq_l_3} we get
\begin{equation}\label{Chapt_2_2_eq_l_9}
    \left[Z, \left(CVC^{-1}-A\right)\right]=2\left[Z, L^{(1)}\left(0\right)C^{-1}\right]=0.
\end{equation}
If $n=1$ then from \eqref{Chapt_2_2_eq_l_3}, taking into account \eqref{Chapt_2_2_eq_2}, \eqref{Chapt_2_2_eq_l_4} and \eqref{Chapt_2_2_eq_l_9}, we obtain
\begin{equation}\label{Chapt_2_2_eq_l_10}
\begin{array}{c}
  0=\left[\left[Z, A\right], \left(CVC^{-1}-A\right)\right]=2\left[Z, \left[A, L^{(1)}\left(0\right)C^{-1}\right]\right]= \\[1.2em]
  =-4\left[Z, L^{(2)}\left(0\right)C^{-1}\right]+4\left[Z, \left(L^{(1)}\left(0\right)C^{-1}\right)^{2}\right]=-\left(-2\right)^{2}\left[Z, L^{(2)}\left(0\right)C^{-1}\right]. \\
\end{array}
\end{equation}
Let us assume that we already have proved equalities
\begin{equation}\label{Chapt_2_2_eq_l_11}
    \left[Z, L^{(n)}\left(0\right)C^{-1}\right]=0,\; n=1,2,\ldots, k
\end{equation}
for some positive integer $k\geq 2.$

  From equalities \eqref{Chapt_2_2_eq_l_3}, assumption \eqref{Chapt_2_2_eq_l_11}, properties of commutators \eqref{Chapt_2_2_eq_2} and equality \eqref{Chapt_2_2_eq_l_4} we get
 \begin{equation}\label{Chapt_2_2_eq_l_12}
\begin{array}{c}
0=\left\{Z A^{(k)}\left(CVC^{-1}-A\right)\right\}=\left[\left[\left\{ZA^{(k-1)}\right\}, A\right], \left(CVC^{-1}-A\right)\right]= \\[1.2em]
=\left[\left\{Z A^{(k-1)}\right\}, \left[A, CVC^{-1}-A\right]\right]=2\left[\left\{Z A^{(k-1)}\right\}, \left[A, L^{(1)}\left(0\right)C^{-1}\right]\right]=\\[1.2em]
=-4\left[\left\{Z A^{(k-1)}\right\}, L^{(2)}\left(0\right)C^{-1}\right]+4\left[\left\{Z A^{(k-1)}\right\}, \left(L^{(1)}\left(0\right)C^{-1}\right)^{2}\right]=\\[1.2em]
=-4\left[\left[Z A^{(k-1)}\right\}, L^{(2)}\left(0\right)C^{-1}\right]=-4\left[\left[\left\{ZA^{(k-2)}\right\}, A\right], L^{(2)}\left(0\right)C^{-1}\right]=\\[1.2em]
=-4\left[\left\{Z A^{(k-2)}\right\}, \left[A, L^{(0)}\left(0\right)C^{-1}\right]\right]=8\left[\left\{Z A^{(k-2)}\right\}, L^{(3)}\left(0\right)C^{-1}\right]-\\[1.2em]
-8\left[\left\{Z A^{(k-2)}\right\}, L^{(2)}\left(0\right)C^{-1}L^{(1)}\left(0\right)C^{-1}\right]=\ldots \\[1.2em]
\ldots =-\left(-2\right)^{k}\left[\left[Z, A\right], L^{(k)}\left(0\right)C^{-1}\right]=-\left(-2\right)^{k}\left[Z, \left[A, L^{(k)}\left(0\right)C^{-1}\right]\right]=\\[1.2em]
=-\left(-2\right)^{k+1}\left[Z, L^{(k+1)}\left(0\right)C^{-1}\right].
\end{array}
 \end{equation}
Thus, according to the principle of mathematical induction, we have that equalities \eqref{Chapt_2_2_eq_l_11} hold for every non-negative integer $n\in\N\cup \left\{0\right\}.$

 From \eqref{Chapt_2_2_eq_l_2} it follows that
 the matrix series
$$\sum_{n=0}^{\infty}L^{(n)}\left(0\right)C^{-1}\frac{t^{n}}{n!}$$ is dominated by the number series
$$\sum_{n=0}^{\infty}\frac{\left(\left\|A\right\|+\left\|CVC^{-1}\right\|\right)^{n}\left(\frac{t}{2}\right)^{n}}{n!}=\exp\left(\left(\left\|A\right\|+\left\|CVC^{-1}\right\|\right)\frac{t}{2}\right).$$
Thus, the matrix series is uniformly convergent on $\left[0,
+\infty\right)$ and its sum coincides with the matrix
$L\left(t\right)C^{-1}.$ This fact together with equalities \eqref{Chapt_2_2_eq_l_11} immediately lead us to the commutation identity \eqref{Chapt_2_2_eq_l_1}. This completes the proof of the Theorem.
\end{proof}

Now we are in position to prove the main theorem of the paper. It is stated below.
\begin{theorem}[An analogue of the Erugin's theorem] \label{T_main_theorem}
Suppose that $A, B, V, W \in M_{m}\left(\R\right).$ The two systems of second-order differential equations
\begin{equation}\label{Chapt_2_2_eq_48}
    \ddot{x}\left(t\right)+A\;\dot{x}\left(t\right)+B\;x\left(t\right)=0,
\end{equation}
\begin{equation}\label{Chapt_2_2_eq_49}
  \ddot{\xi}\left(t\right)+V\;\dot{\xi}\left(t\right)+W\;\xi\left(t\right)=0
\end{equation}
 are $L$-equivalent
  if and only if there exists a nonsingular matrix  $C\in M_{m}\left(\R\right)$ satisfying conditions
\begin{equation}\label{Chapt_2_2_eq_50}
    V_{R}=C^{-1}A_{R}C,
\end{equation}
\begin{equation}\label{Chapt_2_2_eq_51}
  4W=V^{2}+C^{-1}\left(4B-A^{2}\right)C,
\end{equation}
\begin{equation}\label{Chapt_2_2_eq_52}
  \left\{\left(4B-A^{2}\right)A^{(n)}\left(CVC^{-1}-A\right)\right\}=0,\; n=0,1,\ldots,m^{2}-1.
\end{equation}
\end{theorem}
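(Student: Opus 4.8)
The plan is to peel off the explicit form of any admissible $L(t)$ from the first equality in \eqref{Chapt_2_eq_2'}, and then to recover \eqref{Chapt_2_2_eq_50}--\eqref{Chapt_2_2_eq_52} from the second equality together with the regularity requirement, leaning on Lemmas \ref{L_pro_T_Erugina}, \ref{L_pro_suschestv_matritsi}, \ref{L_Komut_totozhnist} and Theorem \ref{T_pro_konechnost'_beskonechnosti} for the nontrivial algebra.

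\emph{Preliminary reduction (used in both directions).} Suppose $L(t)$ is a regular matrix on $[0,+\infty)$ satisfying \eqref{Chapt_2_eq_2'}. The first equality of \eqref{Chapt_2_eq_2'} is $2\dot L(t)+AL(t)=L(t)V$, so $\dot L=\tfrac12(LV-AL)$ and hence $\ddot L=\tfrac12(\dot LV-A\dot L)=\tfrac14(LV^2-2ALV+A^2L)$; moreover the first equality integrates to $L(2t)=\exp(-At)C\exp(Vt)$ as in \eqref{Chapt_2_2_eq_-2}, with the nonsingular matrix $C=L(0)$. Substituting these expressions for $\dot L,\ddot L$ into the second equality $LW=\ddot L+A\dot L+BL$ and simplifying, the cross terms in $A$ cancel and one is left with
\begin{equation*}
  L(t)\bigl(4W-V^2\bigr)=\bigl(4B-A^2\bigr)L(t),\qquad\forall t\ge 0 .
\end{equation*}
Since $4W-V^2$ and $V$ are constant, this says that $L^{-1}(t)(4B-A^2)L(t)$ does not depend on $t$; evaluating at $t=0$ yields $4W=V^2+C^{-1}(4B-A^2)C$, while right-multiplication by $C^{-1}$ yields the commutation identity $\bigl[\,4B-A^2,\ L(t)C^{-1}\,\bigr]=0$ for all $t\ge 0$.

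\emph{Necessity.} Assume \eqref{Chapt_2_2_eq_48} and \eqref{Chapt_2_2_eq_49} are $L$-equivalent. The preliminary reduction gives a regular $L$ of the form \eqref{Chapt_2_2_eq_-2} satisfying $\bigl[\,Z,\ L(t)C^{-1}\,\bigr]=0$ with $Z:=4B-A^2$ (the harmless reparametrization $t\mapsto 2t$ does not affect the norm or determinant bounds), which are exactly the hypotheses of Lemma \ref{L_pro_suschestv_matritsi}. That lemma produces a nonsingular $C_1\in M_m(\R)$ with $V_R=C_1^{-1}A_RC_1$, which is \eqref{Chapt_2_2_eq_50}; with $C^{-1}ZC=C_1^{-1}ZC_1$, so that $4W=V^2+C^{-1}ZC=V^2+C_1^{-1}ZC_1$, which is \eqref{Chapt_2_2_eq_51}; and with $\bigl[\,Z,\ L_1(t)C_1^{-1}\,\bigr]=0$ for $L_1(t)=\exp(-At)C_1\exp(Vt)$. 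Applying Lemma \ref{L_Komut_totozhnist} to this last identity gives $\{(4B-A^2)A^{(n)}(C_1VC_1^{-1}-A)\}=0$ for every $n\ge 0$, in particular for $n=0,\dots,m^2-1$, which is \eqref{Chapt_2_2_eq_52}. Hence the matrix $C:=C_1$ witnesses all three conditions.

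\emph{Sufficiency.} Conversely, let $C\in M_m(\R)$ be nonsingular and satisfy \eqref{Chapt_2_2_eq_50}--\eqref{Chapt_2_2_eq_52}, and set $L(2t)=\exp(-At)C\exp(Vt)$. Writing $A=A_R+A_I$, $V=V_R+V_I$ with commuting summands and using $A_R=CV_RC^{-1}$ from \eqref{Chapt_2_2_eq_50}, one gets $L(2t)=\exp(-A_It)\,C\,\exp(V_It)$ exactly as in the proof of Lemma \ref{L_pro_suschestv_matritsi}; since $A_I,V_I$ are real matrices with purely imaginary spectra, $L$ is regular on $[0,+\infty)$. By Theorem \ref{T_pro_konechnost'_beskonechnosti} the finitely many relations \eqref{Chapt_2_2_eq_52} already force $\{(4B-A^2)A^{(n)}(CVC^{-1}-A)\}=0$ for all $n\ge 0$, whence Lemma \ref{L_Komut_totozhnist} gives $\bigl[\,4B-A^2,\ L(t)C^{-1}\,\bigr]=0$, i.e. $L^{-1}(t)(4B-A^2)L(t)\equiv C^{-1}(4B-A^2)C$. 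Reversing the preliminary reduction, $L$ satisfies the first equality of \eqref{Chapt_2_eq_2'} by construction, and $L^{-1}(\ddot L+A\dot L+BL)=\tfrac14 V^2+L^{-1}(B-\tfrac14 A^2)L\equiv\tfrac14\bigl(V^2+C^{-1}(4B-A^2)C\bigr)=W$ by \eqref{Chapt_2_2_eq_51}. Thus $L$ realizes an $L$-equivalence between \eqref{Chapt_2_2_eq_48} and \eqref{Chapt_2_2_eq_49}.

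\emph{The main obstacle.} The delicate point is the necessity direction: the matrix $C=L(0)$ coming from the given equivalence need not conjugate $A_R$ to $V_R$, so Lemma \ref{L_pro_T_Erugina} by itself does not yield \eqref{Chapt_2_2_eq_50}. Lemma \ref{L_pro_suschestv_matritsi} is precisely the instrument that swaps $C$ for a $C_1$ achieving \eqref{Chapt_2_2_eq_50} while leaving both $C^{-1}(4B-A^2)C$ and the commutation identity unchanged, so that one single matrix satisfies all three conditions; spotting that the constancy of $W$ is exactly the commutation hypothesis required by that lemma is the heart of the proof.
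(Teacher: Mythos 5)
Your proposal is correct and follows essentially the same route as the paper: the same reduction of the second equality of \eqref{Chapt_2_eq_2'} to the constant conjugation identity $L(t)(4W-V^{2})=(4B-A^{2})L(t)$ and hence to $\left[4B-A^{2},\,L(t)C^{-1}\right]=0$, the same invocation of Lemma \ref{L_pro_suschestv_matritsi} to replace $C=L(0)$ by a $C_{1}$ realizing \eqref{Chapt_2_2_eq_50} while preserving \eqref{Chapt_2_2_eq_51} and the commutation identity, and the same use of Lemma \ref{L_Komut_totozhnist} together with Theorem \ref{T_pro_konechnost'_beskonechnosti} to pass between that identity and the finitely many relations \eqref{Chapt_2_2_eq_52}. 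Your writeup is in fact slightly more explicit than the paper's at the point where $C_{1}$ is substituted for $C$ without disturbing condition \eqref{Chapt_2_2_eq_51}.
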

\begin{proof} \textit{Sufficiency.}
Suppose that for some nonsingular matrix $C\in M_{m}\left(\R\right)$ conditions \eqref{Chapt_2_2_eq_50}, \eqref{Chapt_2_2_eq_51} and \eqref{Chapt_2_2_eq_52} are fulfilled. It is easy to see that the matrix
\begin{equation}\label{Chapt_2_2_eq_54}
   L\left(2t\right)=\exp\left(-At\right)C\exp\left(Vt\right)=\exp\left(-A_{I}t\right)C\exp\left(V_{I}t\right)
\end{equation}
is regular on $[0, +\infty).$
Substituting the matrix $L\left(t\right)$ \eqref{Chapt_2_2_eq_54} into the first equality of \eqref{Chapt_2_eq_2'}
we obtain the identity
\begin{equation}\label{Chapt_2_2_eq_56}
    \left.%
\begin{array}{c}
  L^{-1}\left(t\right)\left(2\dot{L}\left(t\right)+AL\left(t\right)\right)=\\[1.2em]
  =L^{-1}\left(t\right)\left(-AL\left(t\right)+L\left(t\right)V+AL\left(t\right)\right)=V.\\
\end{array}%
\right.
\end{equation}
From the second equality of \eqref{Chapt_2_eq_2'} we get
\begin{equation}\label{Chapt_2_2_eq_57}
    \left.%
\begin{array}{c}
  4L^{-1}\left(t\right)\left(\ddot{L}\left(t\right)+A\dot{L}\left(t\right)+BL\left(t\right)\right)=\\[1.2em]
  =L^{-1}\left(t\right)\left(4B-A^{2}\right)L\left(t\right)+V^{2}=4W. \\
\end{array}%
\right.
\end{equation}
Here we have taken into account that equalities
 \eqref{Chapt_2_2_eq_52}, according to Theorem \ref{T_pro_konechnost'_beskonechnosti} and Lemma \ref{L_Komut_totozhnist}, are equivalent to the commutation identity
\begin{equation}\label{Chapt_2_2_eq_58}
   \left[4B-A^{2}, L\left(t\right)C^{-1}\right]=0,\quad
    \forall t\geq 0.
\end{equation}
Since the regular on $[0, +\infty)$ matrix $L\left(t\right)$ \eqref{Chapt_2_2_eq_54} satisfies conditions \eqref{Chapt_2_eq_2'}, systems \eqref{Chapt_2_2_eq_48} and \eqref{Chapt_2_2_eq_49} are $L$-equivalent. The sufficiency is proved.

\textit{Necessity.}
Suppose that systems \eqref{Chapt_2_2_eq_48} and \eqref{Chapt_2_2_eq_49} are
$L$-equivalent. Then, according to the definition of the $L$-equivalence, there exists a regular on $[0, +\infty)$ matrix $L\left(t\right),$
such that
\begin{equation}\label{Chapt_2_2_eq_60}
    L^{-1}\left(t\right)\left(2\dot{L}\left(t\right)+AL\left(t\right)\right)=V,
\end{equation}
\begin{equation}\label{Chapt_2_2_eq_61}
    L^{-1}\left(t\right)\left(\ddot{L}\left(t\right)+A\dot{L}\left(t\right)+BL\left(t\right)\right)=W,\; \forall t\in\left[0, +\infty\right).
\end{equation}
From \eqref{Chapt_2_2_eq_60} we obtain that
\begin{equation}\label{Chapt_2_2_eq_62}
     L\left(2t\right)=\exp\left(-At\right)C\exp\left(Vt\right),
\end{equation}
where  $C\in M_{m}\left(\R\right),$ $\det\left(C\right)\neq 0.$ Then from
\eqref{Chapt_2_2_eq_61}, using formula \eqref{Chapt_2_2_eq_62} and setting $t=0,$ we obtain equality
\eqref{Chapt_2_2_eq_51} and commutation identity
\eqref{Chapt_2_2_eq_58}.

Since the conditions of Lemma
\ref{L_pro_suschestv_matritsi} are fulfilled,  we can assume that the matrix $C$ is chosen in such a way that identity \eqref{Chapt_2_2_eq_58},
equality \eqref{Chapt_2_2_eq_51} and condition \eqref{Chapt_2_2_eq_50} hold and in addition to that matrix \eqref{Chapt_2_2_eq_62} is regular on $\left[0, +\infty\right).$ From identity \eqref{Chapt_2_2_eq_58}, according to Lemma \eqref{L_Komut_totozhnist}, we get equalities  \eqref{Chapt_2_2_eq_52}.
The necessity is proved and the proof of the Theorem is completed.
\end{proof}

\begin{remark} \label{R_zamechanie_na_sluchay_prostori_spectra}
Suppose that $A, V, C \in M_{m}\left(\R\right)$ and $\det\left(C\right)\neq 0.$ If for some non-negative integer $n\in \N\cup \left\{0\right\}$ the spectrum of the matrix $
 Z_{n}=\left\{\left(4B-A^{2}\right)A^{(n)}\right\}$ is simple, i.e., all the eigenvalues of matrix $Z_{n}$ are different, then conditions \eqref{Chapt_2_2_eq_52} are equivalent to the equalities
\begin{equation}\label{Chapt_2_2_eq_63}
 \left[B, CVC^{-1}-A\right]=0,
\end{equation}
\begin{equation}\label{Chapt_2_2_eq_64}
 \left[A, CVC^{-1}\right]=0.
\end{equation}
\end{remark}

\begin{proof} It is almost obvious that conditions
\eqref{Chapt_2_2_eq_63} and  \eqref{Chapt_2_2_eq_64} imply conditions \eqref{Chapt_2_2_eq_52}.

Suppose that conditions \eqref{Chapt_2_2_eq_52} are fulfilled and for some non-negative integer $n$ the spectrum of matrix
$Z_{n}$ is simple. Then there exists a nonsingular matrix $T,$ such that the matrix ${\textstyle T^{-1}Z_{n}T}$ is diagonal with pairwise different diagonal elements. Thus, (see \cite[p. 221]{Gantmaher_v1}) we have that
\begin{equation}\label{Chapt_2_2_eq_65}
  T^{-1}\left(CVC^{-1}-A\right)T=\verb"diag"\left[\sigma_{1},\ldots,\sigma_{m}\right],\; \sigma_{i}\in\R,\; i\in\overline{1, m}.
\end{equation}
Using Theorem
\ref{T_pro_konechnost'_beskonechnosti} and equalities
\eqref{Chapt_2_2_eq_52} we obtain
$$ 0=\left[\left[Z_{n},A\right],CVC^{-1}-A\right]=\left[Z_{n},\left[A,CVC^{-1}-A\right]\right].$$
Applying the same reasoning as above to the latter equalities we arrive at the following representation, which is similar to \eqref{Chapt_2_2_eq_65}:
\begin{equation}\label{Chapt_2_2_eq_66}
\begin{array}{c}
  T^{-1}\left[A,CVC^{-1}-A\right]T=T^{-1}ATT^{-1}\left(CVC^{-1}-A\right)T- \\[1.2em]
  -T^{-1}\left(CVC^{-1}-A\right)TT^{-1}AT
  =TAT^{-1}\diag\left[\sigma_{1},\ldots, \sigma_{m}\right]- \\[1.2em]
  - \diag\left[\sigma_{1},\ldots, \sigma_{m}\right]TAT^{-1}=\diag\left[\tau_{1},\ldots, \tau_{m}\right],\;\; \tau_{i}\in\R,\; i\in\overline{1, m}. \\[1.2em]
\end{array}
\end{equation}
 It is not hard to verify that representation \eqref{Chapt_2_2_eq_66} implies equality $\left[A, CVC^{-1}-A\right]=0$ which, on its own account, implies equality
\eqref{Chapt_2_2_eq_64}. Additionally to that equality \eqref{Chapt_2_2_eq_63} obviously follows from \eqref{Chapt_2_2_eq_52}. The proof is completed.
\end{proof}

Though Theorem \ref{T_main_theorem} gives us the necessary and sufficient conditions providing that systems \eqref{Chapt_2_2_eq_48} and \eqref{Chapt_2_2_eq_49} are equivalent ($L$-equivalent, to be precise), conditions \eqref{Chapt_2_2_eq_50}, \eqref{Chapt_2_2_eq_51} and \eqref{Chapt_2_2_eq_52} of the Theorem do not possess the property of symmetry, which is one of the main properties of an equivalence relation. However, this is only the matter of the wording. In that form the theorem about  $L$-equivalence will be useful in the further sections of the paper. Theorem \ref{T_main_theorem} can be reformulated in the ``symmetric'' form presented below.

\begin{theorem}[An analogue of the Erugin's theorem in the ``symmetric'' form]
Suppose that $A, B, V, W\in M_{m}\left(\R\right).$ The two systems of second-order differential equations \eqref{Chapt_2_2_eq_48} and
\eqref{Chapt_2_2_eq_49} are $L$-equivalent if and only if there exists a nonsingular matrix $C\in M_{m}\left(\R\right),$ such that
\begin{equation}\label{Chapt_2_2_eq_67}
    C V_{R}=A_{R}C,
\end{equation}
\begin{equation}\label{Chapt_2_2_eq_68}
 C\left(4W-V^{2}\right)=\left(4B-A^{2}\right)C,
\end{equation}
\begin{equation}\label{Chapt_2_2_eq_69}
\left.%
\begin{array}{c}
    \left\{\left(4B-A^{2}\right)A^{(n)}\right\}\left(CV-AC\right) =\left(CV-AC\right)\left\{\left(4W-V^{2}\right)V^{(n)}\right\}, \\[1.2em]
 n=0,1,2,\ldots,m^{2}-1.
\end{array}%
\right.
\end{equation}
\end{theorem}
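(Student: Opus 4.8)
The plan is to derive the ``symmetric'' statement as a purely formal corollary of Theorem \ref{T_main_theorem}, exploiting the fact that $L$-equivalence is symmetric even though the conditions \eqref{Chapt_2_2_eq_50}--\eqref{Chapt_2_2_eq_52} are not manifestly so. First I would observe that conditions \eqref{Chapt_2_2_eq_67} and \eqref{Chapt_2_2_eq_68} are just \eqref{Chapt_2_2_eq_50} and \eqref{Chapt_2_2_eq_51} rewritten by multiplying through by $C$ on the appropriate side: \eqref{Chapt_2_2_eq_50} says $V_R = C^{-1}A_R C$, i.e. $C V_R = A_R C$, which is \eqref{Chapt_2_2_eq_67}; and \eqref{Chapt_2_2_eq_51} says $4W - V^2 = C^{-1}(4B-A^2)C$, i.e. $C(4W-V^2) = (4B-A^2)C$, which is \eqref{Chapt_2_2_eq_68}. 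So the only real content is showing that \eqref{Chapt_2_2_eq_52} is equivalent, given \eqref{Chapt_2_2_eq_67}--\eqref{Chapt_2_2_eq_68}, to the symmetric-looking relation \eqref{Chapt_2_2_eq_69}.

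The key step is the conjugation identity: from \eqref{Chapt_2_2_eq_68} one has $C(4W-V^2)C^{-1} = 4B - A^2$, and from \eqref{Chapt_2_2_eq_67} together with $A = A_R + A_I$, $V = V_R + V_I$ one wants $C V^{(k)} C^{-1}$ to relate to $A^{(k)}$ appropriately. I would argue that, under \eqref{Chapt_2_2_eq_67} and the commutation relations \eqref{Chapt_2_2_eq_64}-type consequences extracted in the necessity part of Theorem \ref{T_main_theorem} (namely $[A, CVC^{-1}] = 0$, which is built into having a regular $L(t) = \exp(-At)C\exp(Vt)$), the matrix $CVC^{-1} - A$ is exactly the ``defect'' appearing in \eqref{Chapt_2_2_eq_52}. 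Then, by induction on $n$ using property \eqref{Chapt_2_2_eq_1} and \eqref{Chapt_2_2_eq_2} of commutators, the nested commutator $\{(4B-A^2)A^{(n)}(CVC^{-1}-A)\}$ transforms under conjugation by $C^{-1}$ into $C^{-1}\{(4B-A^2)A^{(n)}\}C \cdot (V - C^{-1}AC)$ minus a symmetric partner term; multiplying the whole identity \eqref{Chapt_2_2_eq_52} by $C$ on the left and $C^{-1}$ on the right and using $C(4W-V^2)C^{-1} = 4B-A^2$, $C V_R C^{-1} = A_R$ converts the left-hand commutator into $\{(4B-A^2)A^{(n)}\}(CV-AC)$ and produces on the right $(CV-AC)\{(4W-V^2)V^{(n)}\}$, which is precisely \eqref{Chapt_2_2_eq_69}. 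Cleanly: \eqref{Chapt_2_2_eq_52} reads $\{(4B-A^2)A^{(n)}\}(CVC^{-1}-A) = (CVC^{-1}-A)\{(4B-A^2)A^{(n)}\}$, and right-multiplying by $C$ turns $CVC^{-1}-A$ on the left into $CV - AC$ and, on the right factor, $C$ gets absorbed so that $(CVC^{-1}-A)\{(4B-A^2)A^{(n)}\}C = (CV-AC)C^{-1}\{(4B-A^2)A^{(n)}\}C = (CV-AC)\{(C^{-1}(4B-A^2)C)(C^{-1}AC)^{(n)}\} = (CV-AC)\{(4W-V^2)V^{(n)}\}$ by \eqref{Chapt_2_2_eq_67}--\eqref{Chapt_2_2_eq_68}.

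I would then package this as: given a nonsingular $C$ with \eqref{Chapt_2_2_eq_50}--\eqref{Chapt_2_2_eq_52} (which exists iff the systems are $L$-equivalent, by Theorem \ref{T_main_theorem}), the same $C$ satisfies \eqref{Chapt_2_2_eq_67}--\eqref{Chapt_2_2_eq_69}, and conversely the three symmetric conditions imply \eqref{Chapt_2_2_eq_50}--\eqref{Chapt_2_2_eq_52} by reversing the algebra (left-multiply \eqref{Chapt_2_2_eq_67} by $C^{-1}$; left-multiply \eqref{Chapt_2_2_eq_68} by $C^{-1}$ and right by $C^{-1}$; and in \eqref{Chapt_2_2_eq_69} substitute $4W-V^2 = C^{-1}(4B-A^2)C$ and $V_R = C^{-1}A_R C$ to fold everything back to \eqref{Chapt_2_2_eq_52}). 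Hence both systems of conditions describe the same set of matrices $C$, and the equivalence with $L$-equivalence follows from Theorem \ref{T_main_theorem}.

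The main obstacle I anticipate is the bookkeeping in the conjugation-of-nested-commutators step: one must be careful that the superposition-of-commutators bracket $\{A_1 A_2 \ldots A_n\}$ is conjugation-equivariant, i.e. $C^{-1}\{A_1 \ldots A_n\}C = \{(C^{-1}A_1 C)\ldots(C^{-1}A_n C)\}$, which is immediate from $C^{-1}[X,Y]C = [C^{-1}XC, C^{-1}YC]$ and induction, but it is essential that the \emph{innermost} argument $CVC^{-1}-A$ of the bracket in \eqref{Chapt_2_2_eq_52} equals $C(V - C^{-1}AC)C^{-1} = C\,(\text{something conjugate-friendly})$; one must verify that $C^{-1}(CVC^{-1}-A)C = V - C^{-1}AC$, which is trivial, and then that $V - C^{-1}AC$ can be rewritten — using $C V_R C^{-1} = A_R$ hence $C^{-1}A_R C = V_R$ — as $V - C^{-1}AC = V_R + V_I - V_R - C^{-1}A_I C = V_I - C^{-1}A_I C$, matching the structure on the $V$-side. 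Provided one tracks which factors of $C$ are absorbed on the left versus the right of each bracket, the computation is routine; the risk is purely clerical.
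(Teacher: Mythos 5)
Your overall strategy coincides with the paper's: reduce everything to showing that, for a nonsingular $C$, conditions \eqref{Chapt_2_2_eq_50}--\eqref{Chapt_2_2_eq_52} and \eqref{Chapt_2_2_eq_67}--\eqref{Chapt_2_2_eq_69} pick out the same matrices, and then invoke Theorem \ref{T_main_theorem}. The identifications \eqref{Chapt_2_2_eq_50}$\Leftrightarrow$\eqref{Chapt_2_2_eq_67} and \eqref{Chapt_2_2_eq_51}$\Leftrightarrow$\eqref{Chapt_2_2_eq_68} are indeed immediate, exactly as you say. But the step you present as ``clean'' contains a genuine gap: you pass from
$(CV-AC)\,C^{-1}\{(4B-A^{2})A^{(n)}\}C$ to $(CV-AC)\{(4W-V^{2})V^{(n)}\}$ by writing
$C^{-1}\{(4B-A^{2})A^{(n)}\}C=\{(C^{-1}(4B-A^{2})C)(C^{-1}AC)^{(n)}\}$ and then claiming this equals $\{(4W-V^{2})V^{(n)}\}$ ``by \eqref{Chapt_2_2_eq_67}--\eqref{Chapt_2_2_eq_68}.'' Condition \eqref{Chapt_2_2_eq_68} does handle the innermost slot, but \eqref{Chapt_2_2_eq_67} only gives $C^{-1}A_{R}C=V_{R}$, \emph{not} $C^{-1}AC=V$; in general $C^{-1}AC-V=C^{-1}A_{I}C-V_{I}\neq 0$ (e.g.\ $A$ skew-symmetric and nonzero, $V=0$, $C=E$ satisfies \eqref{Chapt_2_2_eq_67} with $A_{R}=V_{R}=0$). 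So for $n\geq 1$ the substitution of $V$ for $C^{-1}AC$ in the outer commutator slots is not justified by \eqref{Chapt_2_2_eq_67}--\eqref{Chapt_2_2_eq_68} alone, and your closing remark that $V-C^{-1}AC=V_{I}-C^{-1}A_{I}C$ ``matches the structure'' does not resolve this, since that difference is generically nonzero.

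The missing ingredient is that the conjugation identity
$\{(4W-V^{2})V^{(k+1)}\}=C^{-1}\{(4B-A^{2})A^{(k+1)}\}C$
is not a formal consequence of \eqref{Chapt_2_2_eq_67}--\eqref{Chapt_2_2_eq_68}; it has to be \emph{earned} from the commutation conditions at the lower levels. This is exactly how the paper proceeds: the level-$k$ identity (equivalently, $[\,C^{-1}\{(4B-A^{2})A^{(k)}\}C,\;V-C^{-1}AC\,]=0$ after conjugation) is used to produce the auxiliary equalities \eqref{Chapt_2_2_eq_71}, \eqref{Chapt_2_2_eq_73}, \eqref{Chapt_2_2_eq_74}, \eqref{Chapt_2_2_eq_76}, which replace $C^{-1}AC$ by $V$ in the outermost slot one level at a time; only with the level-$(k+1)$ auxiliary equality in hand does the equivalence of the $n=k+1$ instances of \eqref{Chapt_2_2_eq_52} and \eqref{Chapt_2_2_eq_69} follow. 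Your proposal gestures at an induction but then discards it in favour of the one-line chain, which is where the argument breaks. To repair it, reinstate the interleaved induction: assume the first $k$ instances of both condition sets are equivalent and that \eqref{Chapt_2_2_eq_74} holds, deduce the $n=k$ equivalence, then multiply by $C^{-1}$ on the left and $C$ on the right to obtain \eqref{Chapt_2_2_eq_76}.
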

\begin{proof} To prove the Theorem it is enough to show that conditions \eqref{Chapt_2_2_eq_50} -- \eqref{Chapt_2_2_eq_52} are equivalent to conditions \eqref{Chapt_2_2_eq_67} -- \eqref{Chapt_2_2_eq_69}. It is easy to see that condition \eqref{Chapt_2_2_eq_50} is equivalent to condition \eqref{Chapt_2_2_eq_67}, as well as condition \eqref{Chapt_2_2_eq_51} is equivalent to condition \eqref{Chapt_2_2_eq_68}.

Taking into account \eqref{Chapt_2_2_eq_68}, from equalities \eqref{Chapt_2_2_eq_69} with $n=0$ we obtain the equalities
 \begin{equation}\label{Chapt_2_2_eq_70}
 \begin{array}{c}
   \left(4B-A^{2}\right)\left(CVC^{-1}-A\right)=\left(CVC^{-1}-A\right)C\left(4W-V^{2}\right)C^{-1}= \\[1.2em]
    =\left(CVC^{-1}-A\right)\left(4B-A^{2}\right)
 \end{array}
 \end{equation}
 which lead us to condition \eqref{Chapt_2_2_eq_52} with $n=0.$
Multiplying equality \eqref{Chapt_2_2_eq_70} on $C^{-1}$ from the left and on $C$ from the right and rearranging the summands,  we get
\begin{equation}\label{Chapt_2_2_eq_71}
    [C^{-1}\left(4B-A^{2}\right)C, V]=[\left(4W-V^{2}\right), V]=C^{-1}\left[\left(4B-A^{2}\right), A\right]C.
\end{equation}
 From equalities \eqref{Chapt_2_2_eq_69} with $n=1,$ taking into account \eqref{Chapt_2_2_eq_68} and \eqref{Chapt_2_2_eq_71}, we obtain the equalities
\begin{equation}\label{Chapt_2_2_eq_72}
    \begin{array}{c}
      \left\{\left(4B-A^{2}\right)A\right\}\left(CVC^{-1}-A\right)=\left(CVC^{-1}-A\right)C\left\{\left(4W-V^{2}\right)V\right\}C^{-1}= \\[1.2em]
      =\left(CVC^{-1}-A\right)\left\{\left(4B-A^{2}\right)V\right\}
    \end{array}
\end{equation}
 which lead us to condition \eqref{Chapt_2_2_eq_52} with $n=1.$
Multiplying equality \eqref{Chapt_2_2_eq_72} on $C^{-1}$ from the left and on $C$ from the right and rearranging the summands, we get
$$\begin{array}{c}
    \left(C^{-1}\left\{\left(4B-A^{2}\right)A\right\}C\right)V-V\left(C^{-1}\left\{\left(4B-A^{2}\right)A\right\}C\right)= \\[1.2em]
    =C^{-1}\left\{\left(4B-A^{2}\right)A^{(2)}\right\}C.
  \end{array}
$$
Combining the latter equality with \eqref{Chapt_2_2_eq_71} we obtain
\begin{equation}\label{Chapt_2_2_eq_73}
    \left\{\left(4W-V^{2}\right)V^{(2)}\right\}=C^{-1}\left\{\left(4B-A^{2}\right)A^{(2)}\right\}C.
\end{equation}

Therefore we have proved that the first two equalities of \eqref{Chapt_2_2_eq_52} (with $n=0,1$) are equivalent to the first two equalities of \eqref{Chapt_2_2_eq_69} (with $n=0,1$) respectively. Besides that we have proved the auxiliary equalities \eqref{Chapt_2_2_eq_71} and \eqref{Chapt_2_2_eq_73}. Let us assume that for some positive integer $k,$ $2< k<m^{2}-1$ we have proved that the first $k$ equalities of \eqref{Chapt_2_2_eq_52} are equivalent to the first $k$ equalities of \eqref{Chapt_2_2_eq_69} (with $n=0,1,\ldots,k-1$) respectively  and the auxiliary equality (similar to \eqref{Chapt_2_2_eq_73})
\begin{equation}\label{Chapt_2_2_eq_74}
    \left\{\left(4W-V^{2}\right)V^{(k)}\right\}=C^{-1}\left\{\left(4B-A^{2}\right)A^{(k)}\right\}C
\end{equation}
holds.
Then from equalities \eqref{Chapt_2_2_eq_69} with $n=k,$ taking into account \eqref{Chapt_2_2_eq_74}, we obtain equalities
\begin{equation}\label{Chapt_2_2_eq_75}
    \begin{array}{c}
      \left\{\left(4B-A^{2}\right)A^{(k)}\right\}\left(CVC^{-1}-A\right)=\left(CVC^{-1}-A\right)C\left\{\left(4W-V^{2}\right)V^{(k)}\right\}C^{-1}= \\[1.2em]
      =\left(CVC^{-1}-A\right)\left\{\left(4B-A^{2}\right)V^{(k)}\right\}
    \end{array}
\end{equation}
which lead us to condition \eqref{Chapt_2_2_eq_52} with $n=k.$
In addition to that, multiplying equality \eqref{Chapt_2_2_eq_75} on $C^{-1}$ from the left and on $C$ from the right and rearranging the summands, we obtain
$$\begin{array}{c}
    \left(C^{-1}\left\{\left(4B-A^{2}\right)A^{(k)}\right\}C\right)V-V\left(C^{-1}\left\{\left(4B-A^{2}\right)A^{(k)}\right\}C\right)= \\[1.2em]
    =C^{-1}\left\{\left(4B-A^{2}\right)A^{(k+1)}\right\}C.
  \end{array}
$$
 Combining the latter equality with assumption \eqref{Chapt_2_2_eq_74} we get
\begin{equation}\label{Chapt_2_2_eq_76}
    \left\{\left(4W-V^{2}\right)V^{(k+1)}\right\}=C^{-1}\left\{\left(4B-A^{2}\right)A^{(k+1)}\right\}C.
\end{equation}
Therefore we have proved that the first $k+1$ equalities of \eqref{Chapt_2_2_eq_52} are equivalent to the first $k+1$ equalities of \eqref{Chapt_2_2_eq_69} (with $n=0,1,\ldots,k$) respectively.  Also, we have proved the auxiliary equality \eqref{Chapt_2_2_eq_76}. According to the principle of mathematical induction we can conclude that equalities \eqref{Chapt_2_2_eq_52} are equivalent to equalities \eqref{Chapt_2_2_eq_69}, provided that condition \eqref{Chapt_2_2_eq_68} holds. This completes the proof of the Theorem. \end{proof}

\section{Consequences from Theorem \ref{T_main_theorem}}\label{Conseq_Section}

Below we have stated several consequences from Theorem \ref{T_main_theorem} that are related to the question of symmetrization of the matrix differential equation (or, in other words, the system of differential equations)
\begin{equation}\label{Chapt_2_2_eq_77}
    J\mathbf{\ddot{x}}+\left(D+G\right)\mathbf{\dot{x}}+\left(P+\Pi\right)\mathbf{x}=0,
\end{equation}
where $J, D, G, P, \Pi\in M_{m}\left(\R\right),$ $J=J^{T}>0,$ $D=D^{T},$ $\Pi=\Pi^{T},$ $G=-G^{T},$ $P=-P^{T}.$
Let us denote
\begin{equation}\label{A_and_B_notation}
A=J^{-\frac{1}{2}}\left(D+G\right)J^{-\frac{1}{2}},\quad B=J^{-\frac{1}{2}}\left(P+\Pi\right)J^{-\frac{1}{2}}.
\end{equation}

\begin{corollary}\label{N_1}

    Suppose that there exist a symmetric matrix $V\in M_{m}\left(\R\right)$ and a nonsingular matrix $C\in M_{m}\left(\R\right)$ satisfying conditions
\begin{equation}\label{Chapt_2_2_eq_79}
    \left[A,\;CVC^{-1}\right]=ACVC^{-1}-CVC^{-1}A=0,
\end{equation}
\begin{equation}\label{Chapt_2_2_eq_80}
    \left[B,\;A-CVC^{-1}\right]=B\left(A-CVC^{-1}\right)-\left(A-CVC^{-1}\right)B=0,
\end{equation}
\begin{equation}\label{Chapt_2_2_eq_81}
    CV_{R}=A_{R}C.
\end{equation}
Then the autonomous equation \eqref{Chapt_2_2_eq_77} is $L$-equivalent to the autonomous equation
\begin{equation}\label{Chapt_2_2_eq_78}
    \mathbf{\ddot{\xi}}+V\mathbf{\dot{\xi}}+W\mathbf{\xi}=0,\quad V, W\in M_{m}\left(\R\right),
\end{equation}
\begin{equation}\label{Chapt_2_2_eq_82}
    W=\frac{1}{4}V^{2}+C^{-1}\left(B-\frac{1}{4}A^{2}\right)C,
\end{equation}
containing no gyroscopic structures $(V=V^{T})$.
\end{corollary}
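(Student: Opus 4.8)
The plan is to reduce \eqref{Chapt_2_2_eq_77} to normal form and then invoke Theorem \ref{T_main_theorem}. First I would apply the constant (hence regular on $[0,+\infty)$) structural transformation $\mathbf{x}=J^{-\frac12}\mathbf{y}$; substituting into \eqref{Chapt_2_2_eq_77} and multiplying on the left by $J^{-\frac12}$ turns it, with notation \eqref{A_and_B_notation}, into the normal-form equation $\ddot{\mathbf{y}}+A\dot{\mathbf{y}}+B\mathbf{y}=0$. Since $J^{-\frac12}$ is constant and the product of a constant nonsingular matrix with a regular matrix is again regular on $[0,+\infty)$, it suffices to show that $\ddot{\mathbf{y}}+A\dot{\mathbf{y}}+B\mathbf{y}=0$ is $L$-equivalent to \eqref{Chapt_2_2_eq_78}; the structural transformation of \eqref{Chapt_2_2_eq_77} is then $\mathbf{x}=J^{-\frac12}L(t)\mathbf{\xi}$, with $L(t)$ the regular matrix produced by Theorem \ref{T_main_theorem} (compare \eqref{Ko_Ma_De_1.2}).

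Next I would check the three hypotheses of Theorem \ref{T_main_theorem} for the pair $\ddot{\mathbf{y}}+A\dot{\mathbf{y}}+B\mathbf{y}=0$, \eqref{Chapt_2_2_eq_78} with the given matrix $C$. Condition \eqref{Chapt_2_2_eq_50} is exactly hypothesis \eqref{Chapt_2_2_eq_81}, and condition \eqref{Chapt_2_2_eq_51} holds by the very definition \eqref{Chapt_2_2_eq_82} of $W$ (multiply it by $4$). So the only substantive point is to deduce the commutator equalities \eqref{Chapt_2_2_eq_52} from \eqref{Chapt_2_2_eq_79} and \eqref{Chapt_2_2_eq_80}.

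For this, put $X=CVC^{-1}-A$ and $Z=4B-A^{2}$. Hypothesis \eqref{Chapt_2_2_eq_79} gives $[A,X]=[A,CVC^{-1}-A]=[A,CVC^{-1}]=0$, and \eqref{Chapt_2_2_eq_80} gives $[B,X]=-[B,A-CVC^{-1}]=0$; hence, by the Leibniz rule $[A^{2},X]=A[A,X]+[A,X]A$, we get $[Z,X]=4[B,X]-[A^{2},X]=0$, so $X$ commutes with both $A$ and $Z$. An induction on $n$ then yields $[\{Z A^{(n)}\},X]=0$ for every $n$: the base case $n=0$ is $[Z,X]=0$, and in the step one uses property \eqref{Chapt_2_2_eq_2} (legitimately, since $[A,X]=0$) to write $[\{Z A^{(n)}\},X]=[[\{Z A^{(n-1)}\},A],X]=[\{Z A^{(n-1)}\},[A,X]]=0$, the inductive hypothesis $[\{Z A^{(n-1)}\},X]=0$ serving as the side condition required by \eqref{Chapt_2_2_eq_2}. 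Therefore $\{Z A^{(n)} X\}=[\{Z A^{(n)}\},X]=0$ for all $n$, which in particular gives the finitely many equalities \eqref{Chapt_2_2_eq_52}. Theorem \ref{T_main_theorem} now delivers the $L$-equivalence of $\ddot{\mathbf{y}}+A\dot{\mathbf{y}}+B\mathbf{y}=0$ with \eqref{Chapt_2_2_eq_78}, and since $V=V^{T}$ by assumption this equation carries no gyroscopic structures; together with the first paragraph this proves the corollary.

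I do not anticipate a real obstacle; the only thing demanding care is the commutator bookkeeping in the third paragraph --- in particular invoking \eqref{Chapt_2_2_eq_2} with $\{Z A^{(n-1)}\},A,X$ in the roles of $A,B,C$ and the inductive hypothesis in the role of the side condition, and noting at the outset that $[Z,X]=0$ follows from $[A,X]=0$ and $[B,X]=0$. One should also make explicit that composing the fixed reduction $\mathbf{x}=J^{-\frac12}\mathbf{y}$ with the transformation from Theorem \ref{T_main_theorem} preserves regularity on $[0,+\infty)$, so that the $L$-equivalence transfers back to \eqref{Chapt_2_2_eq_77}.
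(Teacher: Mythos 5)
Your proof is correct and follows exactly the route the paper intends: Corollary \ref{N_1} is presented as a direct consequence of Theorem \ref{T_main_theorem}, and you verify its three hypotheses — \eqref{Chapt_2_2_eq_81} giving \eqref{Chapt_2_2_eq_50}, \eqref{Chapt_2_2_eq_82} giving \eqref{Chapt_2_2_eq_51}, and the commutator induction (via property \eqref{Chapt_2_2_eq_2} with the inductive hypothesis as the side condition) deducing \eqref{Chapt_2_2_eq_52} from \eqref{Chapt_2_2_eq_79}--\eqref{Chapt_2_2_eq_80}. The normalization $\mathbf{x}=J^{-\frac12}\mathbf{y}$ and the composition with the constant nonsingular factor are likewise consistent with the paper's notation \eqref{A_and_B_notation} and with \eqref{Ko_Ma_De_1.2}.
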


\begin{corollary}\label{N_2}
 Suppose that there exist matrices $V, C \in M_{m}\left(\R\right),$ $\det\left(C\right)\neq 0$ satisfying conditions \eqref{Chapt_2_2_eq_79}--\eqref{Chapt_2_2_eq_81} and
\begin{equation}\label{Chapt_2_2_eq_83}
    V^{2}-\left(V^{2}\right)^{T}+C^{-1}ZC-C^{T}Z^{T}\left(C^{-1}\right)^{T}=0
\end{equation}
where
$$Z=\left(B-\frac{1}{4}A^{2}\right).$$
Then the autonomous equation \eqref{Chapt_2_2_eq_77} is $L$-equivalent to the autonomous equation \eqref{Chapt_2_2_eq_78}, \eqref{Chapt_2_2_eq_82}, containing no non-conservative positional structures  $(W=W^{T}).$
\end{corollary}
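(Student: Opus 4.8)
The plan is to reduce equation \eqref{Chapt_2_2_eq_77} to the standard form treated in Theorem \ref{T_main_theorem} and then to recognize the hypotheses \eqref{Chapt_2_2_eq_79}--\eqref{Chapt_2_2_eq_81} together with \eqref{Chapt_2_2_eq_82} as the conditions \eqref{Chapt_2_2_eq_50}--\eqref{Chapt_2_2_eq_52} of that theorem, reserving \eqref{Chapt_2_2_eq_83} for the final symmetry check. First, the constant substitution $x=J^{-\frac{1}{2}}\eta$ transforms \eqref{Chapt_2_2_eq_77} into $\ddot{\eta}+A\dot{\eta}+B\eta=0$ with $A,B$ as in \eqref{A_and_B_notation}. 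Since $J^{-\frac{1}{2}}$ is a constant nonsingular matrix it is regular on $[0,+\infty)$, and the composition of the map $x\mapsto J^{-\frac{1}{2}}\eta$ with any regular $L(t)$ is again regular; hence an $L$-equivalence between $\ddot{\eta}+A\dot{\eta}+B\eta=0$ and \eqref{Chapt_2_2_eq_78} is also an $L$-equivalence between \eqref{Chapt_2_2_eq_77} and \eqref{Chapt_2_2_eq_78}. So it suffices to show that the matrices $A$, $B$, $V$, and $W$ (the latter taken from \eqref{Chapt_2_2_eq_82}) satisfy \eqref{Chapt_2_2_eq_50}, \eqref{Chapt_2_2_eq_51} and \eqref{Chapt_2_2_eq_52}.

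Conditions \eqref{Chapt_2_2_eq_50} and \eqref{Chapt_2_2_eq_51} are immediate: \eqref{Chapt_2_2_eq_81} is precisely $V_R=C^{-1}A_RC$, while multiplying \eqref{Chapt_2_2_eq_82} by $4$ gives $4W=V^2+C^{-1}(4B-A^2)C$, which is \eqref{Chapt_2_2_eq_51}. The real content is \eqref{Chapt_2_2_eq_52}. Set $Y=CVC^{-1}-A$ and $Z_n=\{(4B-A^2)A^{(n)}\}$, so that the left-hand side of \eqref{Chapt_2_2_eq_52} equals $[Z_n,Y]$. From \eqref{Chapt_2_2_eq_79} one gets $[A,Y]=[A,CVC^{-1}]-[A,A]=0$, hence also $[A^2,Y]=A[A,Y]+[A,Y]A=0$; from \eqref{Chapt_2_2_eq_80} one gets $[B,Y]=-[B,A-CVC^{-1}]=0$. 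Therefore $[Z_0,Y]=[4B-A^2,Y]=4[B,Y]-[A^2,Y]=0$. Now argue by induction on $n$: assuming $[Z_n,Y]=0$ and using $Z_{n+1}=[Z_n,A]$, the Jacobi-type identity \eqref{Chapt_2_2_eq_2} (applicable precisely because $[Z_n,Y]=0$) gives $[Z_{n+1},Y]=[[Z_n,A],Y]=[Z_n,[A,Y]]=[Z_n,0]=0$. Hence \eqref{Chapt_2_2_eq_52} holds for all $n$, in particular for $n=0,1,\ldots,m^2-1$, and Theorem \ref{T_main_theorem} shows that \eqref{Chapt_2_2_eq_77} is $L$-equivalent to \eqref{Chapt_2_2_eq_78} with $W$ given by \eqref{Chapt_2_2_eq_82}.

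It remains to check that this $W$ is symmetric exactly under hypothesis \eqref{Chapt_2_2_eq_83}. Expanding $W-W^T$ directly from \eqref{Chapt_2_2_eq_82} shows that the requirement $W=W^T$ is equivalent to condition \eqref{Chapt_2_2_eq_83}; hence under \eqref{Chapt_2_2_eq_83} the reduced equation \eqref{Chapt_2_2_eq_78} has no non-conservative positional structures, which completes the proof. (The argument up to this last step parallels that of Corollary \ref{N_1}, where the symmetry of $V$ is not actually used to establish the $L$-equivalence.) The one delicate point is the induction for \eqref{Chapt_2_2_eq_52}: at each step one must observe that the commutativity premise $[Z_n,Y]=0$ required to apply \eqref{Chapt_2_2_eq_2} is exactly the induction hypothesis just established; everything else is routine commutator algebra.
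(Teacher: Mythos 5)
Your proposal is correct and follows exactly the route the paper intends: the corollary is an instance of Theorem \ref{T_main_theorem}, with \eqref{Chapt_2_2_eq_81} and \eqref{Chapt_2_2_eq_82} supplying \eqref{Chapt_2_2_eq_50} and \eqref{Chapt_2_2_eq_51}, and the only substantive work being the derivation of the whole family \eqref{Chapt_2_2_eq_52} from the two commutation hypotheses \eqref{Chapt_2_2_eq_79}, \eqref{Chapt_2_2_eq_80}; your induction via the identity \eqref{Chapt_2_2_eq_2}, whose applicability at step $n+1$ is exactly the hypothesis $[Z_n,Y]=0$ established at step $n$, is the right argument and is carried out correctly. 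One caveat on the last step: with $Z=B-\tfrac{1}{4}A^{2}$ as printed, the condition $W=W^{T}$ for $W$ from \eqref{Chapt_2_2_eq_82} reads $\tfrac{1}{4}\bigl(V^{2}-(V^{2})^{T}\bigr)+C^{-1}ZC-C^{T}Z^{T}(C^{-1})^{T}=0$, which differs from \eqref{Chapt_2_2_eq_83} by a factor of $4$ on the $V^{2}$ terms (equivalently, \eqref{Chapt_2_2_eq_83} matches $W=W^{T}$ only if one takes $Z=4B-A^{2}$, as in condition \eqref{Chapt_2_2_eq_87} of Theorem \ref{T_Uzag_Mingori}); this is a normalization slip in the statement itself rather than in your argument, but your claim that $W=W^{T}$ is ``equivalent'' to \eqref{Chapt_2_2_eq_83} as literally printed should be adjusted accordingly.
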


\begin{corollary}\label{N_3}
 Suppose that there exist a symmetric matrix  $V\in M_{m}\left(\R\right)$ and a nonsingular matrix $C\in M_{m}\left(\R\right)$ satisfying conditions \eqref{Chapt_2_2_eq_79}--\eqref{Chapt_2_2_eq_81} together with the equality
\begin{equation}\label{Chapt_2_2_eq_84}
    C^{-1}ZC-C^{T}Z^{T}\left(C^{-1}\right)^{T}=0.
\end{equation}
Then the autonomous equation \eqref{Chapt_2_2_eq_77} is $L$-equivalent to the ``symmetric'' autonomous equation \eqref{Chapt_2_2_eq_78}, \eqref{Chapt_2_2_eq_82} $(W=W^{T},\; V=V^{T}).$
\end{corollary}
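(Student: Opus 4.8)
The plan is to obtain Corollary~\ref{N_3} as an immediate common refinement of Corollary~\ref{N_1} and Corollary~\ref{N_2}: the $L$-equivalence and the absence of gyroscopic structures come straight from Corollary~\ref{N_1}, and the absence of non-conservative positional structures is read off from the shape of $W$ once we impose \eqref{Chapt_2_2_eq_84}.

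First I would invoke Corollary~\ref{N_1}. The hypotheses of Corollary~\ref{N_3} already contain all the hypotheses of Corollary~\ref{N_1}: a symmetric matrix $V\in M_{m}(\R)$ and a nonsingular matrix $C\in M_{m}(\R)$ satisfying \eqref{Chapt_2_2_eq_79}, \eqref{Chapt_2_2_eq_80} and \eqref{Chapt_2_2_eq_81}. Hence Corollary~\ref{N_1} gives at once that equation \eqref{Chapt_2_2_eq_77} is $L$-equivalent to equation \eqref{Chapt_2_2_eq_78} with $W$ defined by \eqref{Chapt_2_2_eq_82}, and that $V=V^{T}$, so equation \eqref{Chapt_2_2_eq_78} is already free of gyroscopic structures. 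Nothing has to be re-derived here; in particular the $L$-equivalence itself is supplied, through Corollary~\ref{N_1}, by Theorem~\ref{T_main_theorem}.

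It then remains to check that, under the extra equality \eqref{Chapt_2_2_eq_84}, the matrix $W$ in \eqref{Chapt_2_2_eq_82} is symmetric. I would write $W=\tfrac14 V^{2}+C^{-1}ZC$ with $Z=B-\tfrac14 A^{2}$ and transpose: since $V=V^{T}$, the summand $\tfrac14 V^{2}$ is symmetric, while $(C^{-1}ZC)^{T}=C^{T}Z^{T}(C^{-1})^{T}$, so \eqref{Chapt_2_2_eq_84} says exactly that $C^{-1}ZC$ is symmetric. A sum of two symmetric matrices being symmetric, we get $W=W^{T}$. Equivalently, since $V=V^{T}$ kills the skew-symmetric term $V^{2}-(V^{2})^{T}$ in condition \eqref{Chapt_2_2_eq_83} of Corollary~\ref{N_2}, condition \eqref{Chapt_2_2_eq_84} is precisely what condition \eqref{Chapt_2_2_eq_83} reduces to, so Corollary~\ref{N_2} applies as well and yields $W=W^{T}$.

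I do not expect any genuine obstacle: the statement is a bookkeeping combination of Corollaries~\ref{N_1} and~\ref{N_2}, the shared hypothesis block \eqref{Chapt_2_2_eq_79}--\eqref{Chapt_2_2_eq_81} doing all the real work via Theorem~\ref{T_main_theorem}. The only point worth stating with care is the reduction of \eqref{Chapt_2_2_eq_83} to \eqref{Chapt_2_2_eq_84} under the symmetry assumption on $V$, and the elementary verification that $\tfrac14 V^{2}$ and $C^{-1}ZC$ are both symmetric, whence $W=W^{T}$ together with $V=V^{T}$, i.e.\ the $L$-equivalent system \eqref{Chapt_2_2_eq_78}, \eqref{Chapt_2_2_eq_82} is ``symmetric'' in the sense claimed.
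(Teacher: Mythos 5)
Your proposal is correct and follows exactly the route the paper intends: the paper states Corollaries \ref{N_1}--\ref{N_3} without separate proofs precisely because \ref{N_3} is the conjunction of \ref{N_1} and \ref{N_2}, with \eqref{Chapt_2_2_eq_84} being what \eqref{Chapt_2_2_eq_83} reduces to once $V=V^{T}$ annihilates the term $V^{2}-\left(V^{2}\right)^{T}$. The verification that $W=\frac{1}{4}V^{2}+C^{-1}ZC$ is symmetric because both summands are is exactly the intended bookkeeping, so nothing is missing.
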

\begin{corollary}\label{N_4}
If for some non-negative integer  $n$ the spectrum of the matrix
$$Z_{n}=\left\{\left(4B-A^{2}\right)A^{(n)}\right\}$$
is simple then the conditions of Corollaries {\normalfont \ref{N_1}--\ref{N_3}} are the necessary ones (not only sufficient!).
\end{corollary}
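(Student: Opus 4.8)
The plan is to read Corollary \ref{N_4} off directly from Theorem \ref{T_main_theorem} and Remark \ref{R_zamechanie_na_sluchay_prostori_spectra}. The content of Corollaries \ref{N_1}--\ref{N_3} is that, in the notation \eqref{A_and_B_notation}, conditions \eqref{Chapt_2_2_eq_79}--\eqref{Chapt_2_2_eq_81} together with the formula \eqref{Chapt_2_2_eq_82} (and, for \ref{N_2} and \ref{N_3}, the extra symmetry requirements \eqref{Chapt_2_2_eq_83}, \eqref{Chapt_2_2_eq_84}) are \emph{sufficient} for the respective $L$-equivalence; I must show that, as soon as the spectrum of some $Z_n=\{(4B-A^2)A^{(n)}\}$ is simple, these conditions are also \emph{necessary}. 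The simple-spectrum hypothesis is exactly what upgrades Remark \ref{R_zamechanie_na_sluchay_prostori_spectra} from the one-sided implication \eqref{Chapt_2_2_eq_63},\eqref{Chapt_2_2_eq_64}$\Rightarrow$\eqref{Chapt_2_2_eq_52} (always valid) to a genuine equivalence between \eqref{Chapt_2_2_eq_52} and \eqref{Chapt_2_2_eq_63}--\eqref{Chapt_2_2_eq_64}.

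First I would normalise. The constant substitution $\mathbf{x}=J^{-\frac12}\mathbf{y}$, whose matrix $L(t)\equiv J^{-\frac12}$ is regular on $[0,+\infty)$, transforms \eqref{Chapt_2_2_eq_77} exactly into $\ddot{\mathbf y}+A\dot{\mathbf y}+B\mathbf y=0$ with $A,B$ given by \eqref{A_and_B_notation}. Since $L$-equivalence of autonomous systems is transitive (a product of two matrices regular on $[0,+\infty)$ is again regular on $[0,+\infty)$), equation \eqref{Chapt_2_2_eq_77} is $L$-equivalent to \eqref{Chapt_2_2_eq_78} if and only if $\ddot{\mathbf y}+A\dot{\mathbf y}+B\mathbf y=0$ is. Thus it remains to re-express Theorem \ref{T_main_theorem}, applied to the pair $\bigl(\ddot{\mathbf y}+A\dot{\mathbf y}+B\mathbf y=0,\ \eqref{Chapt_2_2_eq_78}\bigr)$, in the notation of \eqref{Chapt_2_2_eq_79}--\eqref{Chapt_2_2_eq_84}.

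Now assume \eqref{Chapt_2_2_eq_77} is $L$-equivalent to \eqref{Chapt_2_2_eq_78}. By Theorem \ref{T_main_theorem} there is a nonsingular $C\in M_m(\R)$ satisfying \eqref{Chapt_2_2_eq_50}, \eqref{Chapt_2_2_eq_51}, \eqref{Chapt_2_2_eq_52}. Condition \eqref{Chapt_2_2_eq_50} is literally \eqref{Chapt_2_2_eq_81}, and \eqref{Chapt_2_2_eq_51} rearranges into \eqref{Chapt_2_2_eq_82}. By hypothesis the spectrum of $Z_n=\{(4B-A^2)A^{(n)}\}$ is simple for some $n$, so Remark \ref{R_zamechanie_na_sluchay_prostori_spectra} (whose proof rests on Theorem \ref{T_pro_konechnost'_beskonechnosti} and Lemma \ref{L_Komut_totozhnist}) shows that the system \eqref{Chapt_2_2_eq_52} is equivalent to \eqref{Chapt_2_2_eq_63}--\eqref{Chapt_2_2_eq_64}, i.e.\ to \eqref{Chapt_2_2_eq_79}--\eqref{Chapt_2_2_eq_80} (a commutator is insensitive to the sign of one of its arguments). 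Hence \eqref{Chapt_2_2_eq_79}, \eqref{Chapt_2_2_eq_80}, \eqref{Chapt_2_2_eq_81} hold with $W$ given by \eqref{Chapt_2_2_eq_82}: this is necessity for Corollary \ref{N_1} (the symmetry $V=V^T$ being part of the hypothesis of that corollary). If moreover the target system has $W=W^T$, transposing \eqref{Chapt_2_2_eq_82} and subtracting gives exactly \eqref{Chapt_2_2_eq_83}, which is necessity for Corollary \ref{N_2}; if in addition $V=V^T$, so that $V^2$ is symmetric, then \eqref{Chapt_2_2_eq_83} collapses to \eqref{Chapt_2_2_eq_84}, which is necessity for Corollary \ref{N_3}.

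The argument is essentially bookkeeping; the only points requiring mild care are the transitivity of $L$-equivalence through the constant factor $J^{-\frac12}$, which legitimises the reduction to monic form, and the sign/transposition accounting when \eqref{Chapt_2_2_eq_51}, \eqref{Chapt_2_2_eq_52} are translated into \eqref{Chapt_2_2_eq_79}--\eqref{Chapt_2_2_eq_84}. There is no substantive obstacle: every analytic ingredient — existence of a regular $L$-matrix, termination of the commutator chain, the simple-spectrum diagonalisation — has already been supplied by Theorem \ref{T_main_theorem}, Theorem \ref{T_pro_konechnost'_beskonechnosti}, Lemma \ref{L_Komut_totozhnist} and Remark \ref{R_zamechanie_na_sluchay_prostori_spectra}.
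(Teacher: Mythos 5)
Your proposal is correct and follows exactly the route the paper intends: Corollary \ref{N_4} is stated without proof as an immediate consequence of Theorem \ref{T_main_theorem} together with Remark \ref{R_zamechanie_na_sluchay_prostori_spectra}, and your bookkeeping (reduction to monic form via the constant regular factor $J^{-\frac{1}{2}}$, identification of \eqref{Chapt_2_2_eq_50}--\eqref{Chapt_2_2_eq_52} with \eqref{Chapt_2_2_eq_79}--\eqref{Chapt_2_2_eq_82} under the simple-spectrum hypothesis, and the transposition step for \eqref{Chapt_2_2_eq_83}--\eqref{Chapt_2_2_eq_84}) supplies precisely the missing details. The only caveat is cosmetic: with $Z=B-\frac{1}{4}A^{2}$, transposing \eqref{Chapt_2_2_eq_82} yields $\frac{1}{4}\left(V^{2}-\left(V^{2}\right)^{T}\right)+C^{-1}ZC-C^{T}Z^{T}\left(C^{-1}\right)^{T}=0$ rather than \eqref{Chapt_2_2_eq_83} as literally printed, a factor-of-four normalization slip in the paper (compare \eqref{Chapt_2_2_eq_87}) and not a gap in your argument.
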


 Combining Theorem \ref{T_main_theorem} with the theorems of Kelvin -- Tait -- Chetayev it is not hard to prove the following theorem  that can be viewed as a generalization of the Mingori's  \cite{Mingori_Eng} and M\"{u}ller's \cite{Muller} theorems.
\begin{theorem}\label{T_Uzag_Mingori}
Suppose that the matrices $V, C\in M_{m}\left(\R\right),$ $\det\left(C\right)\neq 0,$ \mbox{$V+V^{T}>0$} satisfy conditions
\begin{equation}\label{Chapt_2_2_eq_85}
    C V_{R}=A_{R}C,
\end{equation}
\begin{equation}\label{Chapt_2_2_eq_86}
  \left\{\left(4B-A^{2}\right)A^{(n)}\left(CVC^{-1}-A\right)\right\}=0,\; n=0,1,\ldots,m^{2}-1,
\end{equation}
\begin{equation}\label{Chapt_2_2_eq_87}
    V^{2}-\left(V^{2}\right)^{T}+C^{-1}\left(4B-A^{2}\right)C-C^{T}\left(4B-A^{2}\right)^{T}\left(C^{-1}\right)^{T}=0.
\end{equation}
If the symmetric matrix
\begin{equation}\label{Chapt_2_2_eq_88}
  W=\frac{1}{4}V^{2}+C^{-1}\left(B-\frac{1}{4}A^{2}\right)C
\end{equation}
is positive definite then the null solution of system  \eqref{Chapt_2_2_eq_77} is asymptotically stable  {\normalfont (}in the sense of Lyapunov {\normalfont )} and if matrix \eqref{Chapt_2_2_eq_88} is nonsingular and has at least one negative eigenvalue then the null solution of system \eqref{Chapt_2_2_eq_77} is unstable  {\normalfont (}in the sense of Lyapunov{\normalfont )}.
\end{theorem}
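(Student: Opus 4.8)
The plan is to chain together three stability-preserving (Lyapunov) transformations and then invoke the classical third and fourth Kelvin--Tait--Chetayev theorems.

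First I would pass from \eqref{Chapt_2_2_eq_77} to \eqref{Chapt_2_2_eq_48} by the constant substitution $\mathbf{x}=J^{-\frac12}\mathbf{y}$. Since $J=J^{T}>0$, the matrix $J^{-\frac12}$ is a constant nonsingular symmetric matrix, the transformed equation is exactly $\ddot{\mathbf y}+A\dot{\mathbf y}+B\mathbf y=0$ with $A,B$ given by \eqref{A_and_B_notation}, and a constant nonsingular change of variables is a Lyapunov transformation, so the null solutions of \eqref{Chapt_2_2_eq_77} and \eqref{Chapt_2_2_eq_48} are stable, asymptotically stable or unstable simultaneously. Next I would observe that the hypotheses \eqref{Chapt_2_2_eq_85} and \eqref{Chapt_2_2_eq_86} are literally conditions \eqref{Chapt_2_2_eq_50} and \eqref{Chapt_2_2_eq_52} of Theorem \ref{T_main_theorem}, while \eqref{Chapt_2_2_eq_88} is condition \eqref{Chapt_2_2_eq_51} rewritten for $W$; hence Theorem \ref{T_main_theorem} applies and \eqref{Chapt_2_2_eq_48} is $L$-equivalent to $\ddot{\xi}+V\dot{\xi}+W\xi=0$, that is, to system \eqref{Chapt_2_2_eq_49} with the present $V,W$. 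The $L$-equivalence is realized by a matrix of the form $L(2t)=\exp(-At)C\exp(Vt)$ which, being regular on $[0,+\infty)$, is an $L_{2}$-matrix (as observed after \eqref{Chapt_2_2_eq_-2}), so the corresponding structural transformation is a Lyapunov transformation of the second-order system and again preserves the type of Lyapunov stability. Finally, subtracting the transpose of \eqref{Chapt_2_2_eq_88} from \eqref{Chapt_2_2_eq_88} itself shows that the left-hand side of \eqref{Chapt_2_2_eq_87} equals $4W-4W^{T}$, so hypothesis \eqref{Chapt_2_2_eq_87} is equivalent to $W=W^{T}$.

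It then remains to analyse \eqref{Chapt_2_2_eq_49} under $W=W^{T}$ and $V+V^{T}>0$. Writing $V=D_{1}+G_{1}$ with $D_{1}=\frac12(V+V^{T})=D_{1}^{T}>0$ and $G_{1}=\frac12(V-V^{T})=-G_{1}^{T}$, system \eqref{Chapt_2_2_eq_49} is a mechanical system with complete dissipation $D_{1}$, gyroscopic forces $G_{1}$, potential matrix $W$, and no nonconservative positional forces. If $W>0$, the function $\mathcal{E}=\dot{\xi}^{T}\dot{\xi}+\xi^{T}W\xi$ is positive definite and its derivative along solutions equals $-\dot{\xi}^{T}(V+V^{T})\dot{\xi}\le 0$ (the gyroscopic contribution vanishes because $G_{1}$ is skew-symmetric); since the dissipation is complete, the Barbashin--Krasovskii/LaSalle argument — the content of the third Kelvin--Tait--Chetayev theorem — yields asymptotic stability. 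If instead $\det W\ne 0$ and $W$ has a negative eigenvalue, the potential energy $\frac12\xi^{T}W\xi$ has a nondegenerate critical point at the origin that is not a minimum, so the fourth Kelvin--Tait--Chetayev theorem (Chetayev's instability theorem in the presence of dissipative and gyroscopic forces) yields instability. Tracing these conclusions back through the two Lyapunov transformations proves the assertion for \eqref{Chapt_2_2_eq_77}.

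The algebraic identifications with the hypotheses of Theorem \ref{T_main_theorem} and the symmetry of $W$ are routine. The steps requiring care are: first, the rigorous justification that $L$-equivalence preserves \emph{each} of the three types of Lyapunov stability — this is exactly where boundedness of $L(t)$ together with its first two derivatives matters, so that $L$ is a genuine Lyapunov transformation of the second-order system and not merely of its companion first-order form; and second, invoking the Kelvin--Tait--Chetayev theorems in the precise form that permits nonzero gyroscopic forces $G_{1}$ to accompany the complete dissipation. I expect the latter to be the main obstacle, since it is the place where one must be careful that the LaSalle-type argument and Chetayev's function survive the gyroscopic terms.
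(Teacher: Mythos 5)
Your proposal is correct and follows exactly the route the paper indicates for this theorem: reduce \eqref{Chapt_2_2_eq_77} to \eqref{Chapt_2_2_eq_48} by $\mathbf{x}=J^{-\frac12}\mathbf{y}$, recognize \eqref{Chapt_2_2_eq_85}, \eqref{Chapt_2_2_eq_86}, \eqref{Chapt_2_2_eq_88} as conditions \eqref{Chapt_2_2_eq_50}--\eqref{Chapt_2_2_eq_52} of Theorem \ref{T_main_theorem} and \eqref{Chapt_2_2_eq_87} as the symmetry $W=W^{T}$, then apply the third and fourth Kelvin--Tait--Chetayev theorems to the $L$-equivalent system with complete dissipation $\frac12\left(V+V^{T}\right)>0$. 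The paper leaves these details to the reader, and your filling-in (including the energy-function computation $\dot{\mathcal E}=-\dot{\xi}^{T}\left(V+V^{T}\right)\dot{\xi}$ and the observation that regularity of $L(2t)=\exp(-At)C\exp(Vt)$ makes the transformation a genuine Lyapunov transformation) is sound.
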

It is easy to see that if $P=0$ then conditions \eqref{Chapt_2_2_eq_85} -- \eqref{Chapt_2_2_eq_87} can be satisfied once we take $V=A,$ $C=E.$ In this case we would have that $W=\Pi.$ This means that Theorem \ref{T_Uzag_Mingori} can be considered as a generalization  of the 3-rd and 4-th theorems of Kelvin -- Tait -- Chetayev (see \cite{Merkin}).

 It is not hard to verify that the conditions of the Mingori's \cite{Mingori_Eng} and M\"{u}ller's \cite{Muller} theorems implies conditions \eqref{Chapt_2_2_eq_85} -- \eqref{Chapt_2_2_eq_87}. However the following example shows that the converse of above proposition is not correct in general.

 {\bf Example.} Assume that
 \begin{equation}\label{Chapt_2_2_eq_89}
 \begin{array}{c}
   A=\diag\left[A_{1}, A_{2}\right], \quad B=\left[
                                               \begin{array}{cc}
                                                 B_{1} & b_{5}E^{(2)} \\
                                                 b_{5}E^{(2)} & B_{2} \\
                                               \end{array}
                                             \right], \\ [1.2em]
   A_{1}=T\left[
                                                    \begin{array}{cc}
                                                      a_{1} & a_{2} \\
                                                      -a_{2} & a_{1} \\
                                                    \end{array}
                                                  \right]T^{-1},\;A_{2}=T\left[
                                                    \begin{array}{cc}
                                                      a_{3} & a_{4} \\
                                                      -a_{4} & a_{3} \\
                                                    \end{array}
                                                  \right]T^{-1},\\[1.2em]
                                                   B_{1}=T\left[
                                                    \begin{array}{cc}
                                                      b_{1} & b_{2} \\
                                                      -b_{2} & b_{1} \\
                                                    \end{array}\right]T^{-1},\;
                                                   B_{2}=T\left[
                                                    \begin{array}{cc}
                                                      b_{3} & \frac{a_{3}b_{2}}{a_{1}} \\
                                                      -\frac{a_{3}b_{2}}{a_{1}} & b_{3} \\
                                                    \end{array}\right]T^{-1},\\[1.2em]
                                                    J=\diag\left[1,1,1,1\right], \quad T=\left[
                                                        \begin{array}{cc}
                                                          1 & 1 \\
                                                          0 & 1 \\
                                                        \end{array}
                                                      \right],\quad E^{(2)}=\left[
                                                                              \begin{array}{cc}
                                                                                1 & 0 \\
                                                                                0 & 1 \\
                                                                              \end{array}
                                                                            \right].
 \end{array}
 \end{equation}
 Then in terms of matrix coefficients of equation \eqref{Chapt_2_2_eq_77} we have
\begin{equation}\label{Chapt_2_2_eq_90}
\begin{array}{c}
  D=\diag\left[D_{1}, D_{2}\right], \quad G=\diag\left[G_{1}, G_{2}\right], \\[1.2em]
  D_{1}=\left[
          \begin{array}{cc}
            a_{1}-a_{2} & \frac{a_{2}}{2} \\
            \frac{a_{2}}{2} & a_{2}+a_{1} \\
          \end{array}
        \right],\; D_{2}=\left[
                           \begin{array}{cc}
                             a_{3}-a_{4} & \frac{a_{4}}{2} \\
                             \frac{a_{4}}{2} & a_{4}+a_{3} \\
                           \end{array}
                         \right],\\[1.2em]
   G_{1}=-\frac{3a_{2}}{2}S^{(2)},\quad G_{2}=-\frac{3a_{4}}{2}S^{(2)},\quad S^{(2)}=\left[
                                                                                     \begin{array}{cc}
                                                                                       0 & -1 \\
                                                                                       1 & 0 \\
                                                                                     \end{array}
                                                                                   \right],\\[1.2em]
   \Pi=\left[
         \begin{array}{cc}
           \Pi_{1} & b_{5}E^{(2)} \\
           b_{5}E^{(2)} & \Pi_{2} \\
         \end{array}
       \right],\quad P=-\diag\left[\frac{3b_{2}}{2}S^{(2)}, \frac{3a_{3}b_{2}}{2a_{1}}S^{(2)}\right],\\[1.2em]
    \Pi_{1}=\left[
              \begin{array}{cc}
                b_{1}-b_{2} & \frac{b_{2}}{2} \\
                \frac{b_{2}}{2} & b_{2}+b_{1} \\
              \end{array}
            \right],\; \Pi_{2}=\left[
                                 \begin{array}{cc}
                                   \frac{b_{3}a_{1}-b_{2}a_{3}}{a_{1}} & \frac{a_{3}b_{2}}{2a_{1}} \\
                                   \frac{a_{3}b_{2}}{2a_{1}} & \frac{a_{3}b_{2}+b_{3}a_{1}}{a_{1}} \\
                                 \end{array}
                               \right].
\end{array}
\end{equation}
Both, the Mingori's \cite{Mingori_Eng} and M\"{u}ller's \cite{Muller} theorems demand the commutativity of the matrices $P$ and $D$. However, it is easy to verify that for the matrices $P$ and $D$ \eqref{Chapt_2_2_eq_90} this condition is not fulfilled in general. Thus, we can't use the results of the mentioned theorems for the stability investigation of system \eqref{Chapt_2_2_eq_77}, \eqref{Chapt_2_2_eq_90}. On the other hand, the matrices
\begin{equation}\label{Chapt_2_2_eq_91}
\begin{array}{c}
  V=\diag\left[V_{1}, V_{2}\right],\quad C=\diag\left[T, T\right], \\[1.2em]
  V_{1}=a_{1}E^{(2)}-\left(a_{2}-\frac{2b_{2}}{a_{1}}\right)S^{(2)},\quad V_{2}=a_{3}E^{(2)}-\left(a_{4}-\frac{2b_{2}}{a_{1}}\right)S^{(2)},
\end{array}
\end{equation}
satisfy conditions \eqref{Chapt_2_2_eq_79} -- \eqref{Chapt_2_2_eq_81} of Theorem  \ref{T_Uzag_Mingori}, according to which the matrix $W$ \eqref{Chapt_2_2_eq_88} can be expressed in the form of
\begin{equation}\label{Chapt_2_2_eq_92}
  W=\left[
        \begin{array}{cc}
          w_{1}E^{(2)} & b_{5}E^{(2)} \\
          b_{5}E^{(2)} & w_{2}E^{(2)} \\
        \end{array}
      \right],\; w_{1}=\frac{a_{1}a_{2}b_{2}-b_{2}^{2}+a_{1}^{2}b_{1}}{a_{1}^{2}},\;w_{2}=\frac{a_{1}a_{4}b_{2}-b_{2}^{2}+a_{1}^{2}b_{3}}{a_{1}^{2}}.
\end{equation}
 The conditions of Sylvester’s criterion (see \cite[c. 99]{Bellman}), when applied to the matrix
 $W$ \eqref{Chapt_2_2_eq_92}, lead us to the inequalities
 \begin{equation}\label{Chapt_2_2_eq_93}
    w_{1}>0,\quad w_{2}>0.
 \end{equation}
 Inequalities \eqref{Chapt_2_2_eq_93} together with the conditions $a_{1}>0, a_{3}>0$ (providing that the matrix $V$ \eqref{Chapt_2_2_eq_91} is positive definite) describe the region of the asymptotical stability of the null solution of system \eqref{Chapt_2_2_eq_77}, \eqref{Chapt_2_2_eq_90}.

\section{On the interconnection between the notions of the  $L_{k}$-equivalence and the equivalence in the sense of Lyapunov}\label{Connection_Section}
It is well known that systems \eqref{Chapt_2_eq_0} and \eqref{Chapt_2_eq_1} can be rewritten in the form of
\begin{equation}\label{Chapt_2_2_eq_103}
    \frac{d}{dt}\left[%
\begin{array}{c}
  \mathbf{x} \\
  \mathbf{\dot{x}} \\
\end{array}%
\right]=\mathbf{A}^{\ast}\left(t\right)\left[%
\begin{array}{c}
  \mathbf{x} \\
  \mathbf{\dot{x}} \\
\end{array}%
\right], \quad \mathbf{A}^{\ast}\left(t\right)=\left[%
\begin{array}{cc}
  O & E \\
  -B\left(t\right) & -A\left(t\right) \\
\end{array}%
\right],
\end{equation}
and
\begin{equation}\label{Chapt_2_2_eq_97}
    \frac{d}{dt}\left[%
\begin{array}{c}
  \mathbf{\xi} \\
  \mathbf{\dot{\xi}} \\
\end{array}%
\right]=\mathbf{V}^{\ast}\left(t\right)\left[%
\begin{array}{c}
  \mathbf{\xi} \\
  \mathbf{\dot{\xi}} \\
\end{array}%
\right],\quad \mathbf{V}^{\ast}\left(t\right)=\left[%
\begin{array}{cc}
  O & E \\
  -W\left(t\right) & -V\left(t\right) \\
\end{array}%
\right],
\end{equation}
respectively. Suppose that systems \eqref{Chapt_2_2_eq_103} and \eqref{Chapt_2_2_eq_97} are connected by the transformation
\begin{equation}\label{Chapt_2_2_eq_110}
    \left[
      \begin{array}{c}
        \mathbf{x} \\
        \mathbf{\dot{x}} \\
      \end{array}
    \right]=\L\left(t\right)\left[
      \begin{array}{c}
        \mathbf{\xi} \\
        \mathbf{\dot{\xi}} \\
      \end{array}
    \right],\quad \L\left(t\right)=\left[
                       \begin{array}{cc}
                         L_{11}\left(t\right) & L_{12}\left(t\right) \\
                         L_{21}\left(t\right) & L_{22}\left(t\right) \\
                       \end{array}
                     \right], t\in\left[t_{0}, +\infty\right),
\end{equation}
$L_{ij}\left(t\right)\in M_{m}\left(C^{1}\left[t_{0}, +\infty\right]\right).$ It is not hard to verify that this would be the case if and only if the equalities
\begin{equation}\label{Chapt_2_2_eq_101}
  \begin{array}{c}
    \dot{L}_{11}\left(t\right)-L_{12}\left(t\right)W\left(t\right)-L_{21}\left(t\right)= 0, \\[1.2em]
    \dot{L}_{12}\left(t\right)-L_{12}\left(t\right)V\left(t\right)+L_{11}\left(t\right)-L_{22}\left(t\right)= 0,\;  \\
  \end{array}
\end{equation}
\begin{equation}\label{Chapt_2_2_eq_102}
    \begin{array}{c}
      B\left(t\right)L_{11}\left(t\right)+A\left(t\right)L_{21}\left(t\right)=L_{22}\left(t\right)W\left(t\right)-\dot{L}_{21}\left(t\right), \\[1.2em]
      B\left(t\right)L_{12}\left(t\right)+A\left(t\right)L_{22}\left(t\right)=-L_{21}\left(t\right)+L_{22}\left(t\right)V\left(t\right)-\dot{L}_{22}\left(t\right) \end{array}
\end{equation}
hold true $\forall t\in\left[t_{0}, +\infty\right).$

In  accordance with the definition of the equivalence in the sense of Lyapunov of two systems of first-order ODEs that was given in \cite[p. 118]{Gantmaher_v2} we can introduce the same notion for the case of second-order systems.
\begin{definition}\label{O_pro_ekv_v_vensi_Lyapunova}
  We say that the systems of second-order ODEs \eqref{Chapt_2_eq_0} and \eqref{Chapt_2_eq_1} are equivalent in the sense of Lyapunov if there exists a Lyapunov matrix {\normalfont (see definition in \cite[p. 117]{Gantmaher_v2})} $\L\left(t\right)$ \eqref{Chapt_2_2_eq_110} satisfying conditions \eqref{Chapt_2_2_eq_101}, \eqref{Chapt_2_2_eq_102}.
\end{definition}

Let us assume that the matrix $\L\left(t\right)$ \eqref{Chapt_2_2_eq_110} satisfies conditions
\begin{equation*}
  L_{12}\left(t\right)=O,\quad L_{11}\left(t\right)=L\left(t\right)\in M_{m}\left(C^{2}\left[t_{0}, +\infty\right)\right),
\end{equation*}
\begin{equation*}
    \inf\limits_{t\in\left[t_{0}, +\infty\right]}\left|\det\left(L\left(t\right)\right)\right|> 0,\;
    \sup\limits_{t\in \left[t_{0}, +\infty\right)}\left\|\frac{d^{k}}{d t^{k}}L\left(t\right)\right\|<+\infty,\; \forall k\in\overline{0,2}.
\end{equation*}
Then from equalities \eqref{Chapt_2_2_eq_101} we immediately obtain that $L_{21}\left(t\right)=\dot{L}_{11}\left(t\right),$ $L_{22}\left(t\right)=L\left(t\right);$ transformation \eqref{Chapt_2_2_eq_110} reduces to the form
\begin{equation}\label{Chapt_2_2_eq_105}
\left[%
\begin{array}{c}
  \mathbf{x} \\
  \mathbf{\dot{x}} \\
\end{array}%
\right]=\left[%
\begin{array}{cc}
  L\left(t\right) & O \\
  \dot{L}\left(t\right) & L\left(t\right) \\
\end{array}%
\right]\left[%
\begin{array}{c}
  \mathbf{\xi} \\
  \mathbf{\dot{\xi}} \\
\end{array}%
\right]
\end{equation}
and represents a Lyapunov transformation (see definition in \cite[p. 117]{Gantmaher_v2}); conditions \eqref{Chapt_2_2_eq_102} reduce to conditions \eqref{Chapt_2_eq_2}, i.e.,
\begin{equation}\label{Chapt_2_2_eq_108}
\begin{array}{c}
  V\left(t\right)=L^{-1}\left(t\right)\left(2\dot{L}\left(t\right)+A\left(t\right)L\left(t\right)\right), \\[1.2em]
  W\left(t\right)=L^{-1}\left(t\right)\left(\ddot{L}\left(t\right)+A\left(t\right)\dot{L}\left(t\right)+B\left(t\right)L\left(t\right)\right).
\end{array}
\end{equation}
 Thus, we can conclude that if systems \eqref{Chapt_2_eq_0} and \eqref{Chapt_2_eq_1} are $L_{2}$-equivalent according to Definition \ref{O_pro_Lk_ekvival} then they are equivalent in the sense of Lyipunov according to Definition \ref{O_pro_ekv_v_vensi_Lyapunova}. However, it is almost obvious that the converse of above proposition is not correct in general. It is easy to see that the notion of the equivalence in the sense of Lyapunov includes the notions of the $L_{2}$-equivalence (see Definition \ref{O_pro_Lk_ekvival}) and the $L$-equivalence (see Definition \ref{O_pro_L_ekvival}) as partial cases.
Therefore, when we consider the possibility of using structural transformations to aid the investigation of stability of the null solution of system \eqref{Chapt_2_eq_0}, we inevitably arrive at the following {\it general problems of symmetrization}:
\begin{enumerate}
\item {\it for the given system \eqref{Chapt_2_eq_0}, find a Lyapunov matrix $\L\left(t\right)$ \eqref{Chapt_2_2_eq_110} and matrices  $V\left(t\right), W\left(t\right)\in M_{m}\left(C\left[t_{0}, +\infty\right)\right)$ which satisfy the symmetry conditions  $V\left(t\right)=V^{T}\left(t\right)$ and/or $W\left(t\right)=W^{T}\left(t\right)$ together with equalities \eqref{Chapt_2_2_eq_101}, \eqref{Chapt_2_2_eq_102} $\forall t\in\left[t_{0}, +\infty\right);$}
\item {\it for the given autonomous system \eqref{Chapt_2_eq_0}, i.e, $A\left(t\right)=A\in M_{m}\left(\R\right),$ $B\left(t\right)=B\in M_{m}\left(\R\right),$ find a Lyapunov matrix $\L\left(t\right)$ \eqref{Chapt_2_2_eq_110} and matrices $V\left(t\right)=V\in M_{m}\left(\R\right),$ $W\left(t\right)=W\in M_{m}\left(\R\right)$ which satisfy the symmetry conditions $V=V^{T}$ and/or $W=W^{T}$ together with equalities \eqref{Chapt_2_2_eq_101}, \eqref{Chapt_2_2_eq_102} $\forall t\in\left[0, +\infty\right).$}
\end{enumerate}

In the case when systems \eqref{Chapt_2_eq_0} and \eqref{Chapt_2_eq_1} are autonomous, i.e., $\mathbf{A}^{\ast}\left(t\right)=\mathbf{A}^{\ast}\in M_{2m}\left(\R\right),$ $\mathbf{V}^{\ast}\left(t\right)=\mathbf{V}^{\ast}\in M_{2m}\left(\R\right),$ the necessary and sufficient conditions providing that they are equivalent in the sense of Lyapunov where found by Erugin (see the Erugin's theorem in \cite[p. 145]{Gantmacher_appl}): {\it Two systems \eqref{Chapt_2_2_eq_103} and \eqref{Chapt_2_2_eq_97} ($\mathbf{A}^{\ast}$ and $\mathbf{V}^{\ast}$ are constant matrices of the same order) are equivalent in the sense of Lyapunov if and only if the matrices $\mathbf{A}^{\ast}$ and  $\mathbf{V}^{\ast}$ have one and the same real part of the spectrum or , in other words, there exists a nonsingular matrix $\mathbf{C}\in M_{2m}\left(\R\right),$ satisfying equality $$\mathbf{A}^{\ast}_{R}=\mathbf{C}\mathbf{V}^{\ast}_{R}\mathbf{C}^{-1}.$$}

Evidently, in general case, to check whether the conditions of the Erugin's theorem are fulfilled could be as difficult as to solve both systems \eqref{Chapt_2_2_eq_103} and \eqref{Chapt_2_2_eq_97} directly.
However, Theorems \ref{T_Vikl_Giro_Struct} -- \ref{T_Vikl_NPG_Struct}, \ref{T_main_theorem} indicate that in some cases the question about equivalence in the sese of Lyapunov of two systems \eqref{Chapt_2_2_eq_103} and \eqref{Chapt_2_2_eq_97} can be answered without necessity to solve them.

Let as suppose that $V\left(t\right), W\left(t\right)\in M_{m}\left(C^{1}\left[t_{0}, +\infty\right)\right).$ Then substituting the expressions for matrices $L_{21}\left(t\right)$ and  $L_{22}\left(t\right)$ obtained from equations \eqref{Chapt_2_2_eq_101} into equation \eqref{Chapt_2_2_eq_102}, we get the following system of second-order matrix differential equations with respect to the unknown matrices $L_{11}\left(t\right),\; L_{12}\left(t\right):$
\begin{equation}\label{Chapt_2_2_eq_106'}
\begin{array}{c}
  \cfrac{d}{d t}\left(\dot{Z}\left(t\right)+Z\left(t\right)\mathbf{V}^{\ast}\left(t\right)\right)
  + \left(\dot{Z}\left(t\right)+Z\left(t\right)\mathbf{V}^{\ast}\left(t\right)\right)\mathbf{V}^{\ast}\left(t\right)+B\left(t\right)Z\left(t\right)+\\[1.2em]
                                                           +A\left(t\right)\left(\dot{Z}\left(t\right)+Z\left(t\right)\mathbf{V}^{\ast}\left(t\right)\right)=0,\quad Z\left(t\right)=\left[L_{11}\left(t\right), L_{12}\left(t\right)\right].
\end{array}
\end{equation}
Thus, we arrive at the conclusion that systems  \eqref{Chapt_2_eq_0} and \eqref{Chapt_2_eq_1} are equivalent in the sense of Lyapunov if and only if system \eqref{Chapt_2_2_eq_106'} possesses a solution $Z\left(t\right)$  satisfying conditions
\begin{equation}\label{Chapt_2_2_eq_106}
    \begin{array}{c}
      \sup\limits_{t\in\left[t_{0}, +\infty\right)}\bigg\|\cfrac{d^{k}}{d t^{k}}\L\left(t\right)\bigg\|<+\infty,\; k=0,1,\quad \inf\limits_{t\in\left[t_{0}, +\infty\right)}\left|\det\left(\L\left(t\right)\right)\right|>0,\\[1.2em]
      L_{21}\left(t\right)=\dot{L}_{11}\left(t\right)-L_{12}\left(t\right)W\left(t\right), \\ [1.2em]
      L_{22}\left(t\right)=\dot{L}_{12}\left(t\right)-L_{12}\left(t\right)V\left(t\right)+L_{11}\left(t\right). \\
    \end{array}
\end{equation}

The {\it general problems of symmetrization} (GPS) stated above have not been studied in this paper. However, on my opinion, the problem of finding necessary and sufficient conditions for solvability of the GPS can be interesting from both practical and theoretical points of view. This problem is significantly more complicated then the problem of finding necessary and sufficient conditions for solvability of the EGS and/or ENPS problems (see definitions on pp. \pageref{Page_zadacha_1} and \pageref{Page_zadacha_2}). The main reason for that is the significant complexity of conditions \eqref{Chapt_2_2_eq_106'}, \eqref{Chapt_2_2_eq_106} for finding the matrices $\L\left(t\right), \;V\left(t\right), \; W\left(t\right).$ On the other hand, as it was mentioned above, in some cases to solve the GPS for the given system the one should be able to determine the Jordan canonical form of the system's matrix (see the conditions of the Erugin's theorem). Evidently, in this case the using of structural transformations can't facilitate the stability  investigation of the null solution of the system.

\section{Application of the structural transformations to the stability investigation of dynamical systems}\label{Priklad_Section}
{\bf The stability of rotary motion of a rigid body suspended on a string.}
Let us consider the symmetrization problem for the system of second-order differential equations describing the perturbed motion of a heavy, symmetric rigid body suspended to the stationary point $O$ by the inextensible weightless string. We assume that the string attaches to the body at the point $S$ lying on the body's symmetry axis. We denote the distance between point $S$ and the center of mass of the body by $a,$ and the length of the string  by $b.$
It is known that the rotary motion of the body can be approximately described   by the following equations (see equations (2.8) and (2.9) from
\cite{Karepetyan_Lagutina}):
\begin{equation}\label{Chapt_2_5_eq_1}
    \left\{%
\begin{array}{l}
  J_{1}\ddot{x}_{1}+\lambda\dot{x}_{1}+cx_{1}-(2J_{1}-J_{3})\omega\dot{x}_{2}+
  (\lambda_{1}-\lambda)\omega x_{2}+mgax_{3}=0, \\
  J_{1}\ddot{x}_{2}+\lambda\dot{x}_{2}+cx_{2}+(2J_{1}-J_{3})\omega\dot{x}_{1}-
  (\lambda_{1}-\lambda)\omega x_{1}+mgax_{4}=0, \\
  mb^{2}\ddot{x}_{3}+mb(g-b\omega^{2})x_{3}-2mb^{2}\omega \dot{x}_{4}+mgax_{1}=0, \\
    mb^{2}\ddot{x}_{4}+mb(g-b\omega^{2})x_{4}+2mb^{2}\omega \dot{x}_{3}+mgax_{2}=0, \\
\end{array}%
\right.
\end{equation}
where $c=mga\left(\varepsilon+1\right)+\left(J_{3}-J_{1}\right)\omega^{2},
\; a=b\varepsilon,$ $\lambda=f D_{1},$ $\lambda_{1}=f D_{3}.$ In equations \eqref{Chapt_2_5_eq_1} by $\omega>0$  we denote the angular velocity of rotation of the body, by $m$ --- the mass of the body, by $g$ --- the free fall acceleration, and by $J^{\ast}=\diag\left[J_{1}, J_{1}, J_{3}\right]$ --- the central tensor of inertia of the body. The authors of \cite{Karepetyan_Lagutina} assume that the body is effected by the dissipative moment  $M_{d}=-f D \omega,$ where $D=\diag\left[D_{1}, D_{1}, D_{3}\right],$ $D_{1}>0, D_{3}>0, f>0.$ Additionally, we assume that $2J_{1}-J_{3}\neq 0,$ $J_{1}>0.$

It is easy to see that system \eqref{Chapt_2_5_eq_1} is equivalent to system \eqref{Chapt_2_2_eq_77} with
$$J=\diag\left[J_{1}\; E^{(2)},mb^{2}\; E^{(2)}\right],\ \ \
D=\diag\left[\lambda\; E^{(2)}, O^{(2)}\right],$$
$$G=\diag\left[(2J_{1}-J_{3})\omega\; S^{(2)}, 2mb^{2}\omega\; S^{(2)}\right], \ \ \
$$
$$\Pi=\left[%
\begin{array}{ll}
  c\;E^{(2)} & mga\; E^{(2)} \\
  mga\;E^{(2)} & mb(g-b\omega^{2})\;E^{(2)} \\
\end{array}%
\right],$$ $$P=\diag\left[-(\lambda_{1}-\lambda)\omega\;S^{(2)},
 O^{(2)}\right],\ \
\ $$
$$E^{(2)}=\left[%
\begin{array}{cc}
  1 & 0 \\
  0 & 1 \\
\end{array}%
\right],\ \ \ S^{(2)}=\left[%
\begin{array}{cc}
  0 & -1 \\
  1 & 0 \\
\end{array}%
\right], $$ where $O^{(2)}$ denotes the square zero matrix of order $2$.
Furthermore, using notation \eqref{A_and_B_notation} we get
\begin{equation}\label{Chapt_2_5_eq_2}
\left.%
\begin{array}{c}
  A=J^{-\frac{1}{2}}\left(D+G\right)J^{-\frac{1}{2}}=D_{1}+G_{1}= \\[1.2em]
 =\diag\left[\frac{1}{J_{1}}\left[%
\begin{array}{cc}
  \lambda & -(2J_{1}-J_{3})\omega \\
  (2J_{1}-J_{3})\omega & \lambda \\
\end{array}%
\right],2\omega\left[%
\begin{array}{cc}
  0 & -1 \\
  1 & 0 \\
\end{array}%
\right]\right], \\
\end{array}%
\right.
\end{equation}

\begin{equation}\label{Chapt_2_5_eq_2_1}
\begin{array}{c}
  B=J^{-\frac{1}{2}}\left(P+\Pi\right)J^{-\frac{1}{2}}=P_{1}+\Pi_{1}= \\[1.2em]
   =\left[%
\begin{array}{cc}
  cJ_{1}^{-1}E^{(2)}-J_{1}^{-1}(\lambda_{1}-\lambda)\omega\;S^{(2)} & \frac{g a\sqrt{m}}{b\sqrt{J_{1}}}E^{(2)} \\
  \frac{g a\sqrt{m}}{b\sqrt{J_{1}}}E^{(2)} & \left(gb^{-1}-\omega^{2}\right)E^{(2)} \\
\end{array}%
\right].
\end{array}
\end{equation}

Let us find out the sufficient conditions in terms of the parameters of system \eqref{Chapt_2_5_eq_1} which provide that the system is equivalent to some other system does not containing the gyroscopic structures and (or) non-conservative positional structures.

\textbf{The elimination of the gyroscopic structures.}
 It is easy to verify that the spectrum of matrix $Z_{0}=\left(4B-A^{2}\right)$ \eqref{Chapt_2_5_eq_2}, \eqref{Chapt_2_5_eq_2_1} is simple. Therefore, according to Corollary \ref{N_4} the conditions of Corollary \ref{N_1} are necessary and sufficient simultaneously. Let us check whether the conditions of Corollary~\ref{N_1} are fulfilled.  The matrix $CV_{1}C^{-1}$ satisfying condition
\eqref{Chapt_2_2_eq_79} can be expressed in the form of
\begin{equation}\label{Chapt_2_5_eq_3}
 CVC^{-1}=\diag\left[\left[%
\begin{array}{cc}
  v_{11} & v_{12} \\
  -v_{12} & v_{11} \\
\end{array}%
\right], \left[%
\begin{array}{cc}
  v_{33} & v_{34} \\
  -v_{34} & v_{33} \\
\end{array}%
\right]\right].
\end{equation}
Taking into account representation \eqref{Chapt_2_5_eq_3} and the fact that matrix V is real we arrive at the conclusion that condition
\eqref{Chapt_2_2_eq_81} can be satisfied if and only if
\begin{equation}\label{Chapt_2_5_eq_4}
    v_{11}=\frac{\lambda}{J_{1}},\; v_{33}=0.
\end{equation}
 Taking into account \eqref{Chapt_2_5_eq_3} and \eqref{Chapt_2_5_eq_4}, from condition \eqref{Chapt_2_2_eq_80}
we can find that
\begin{equation}\label{Chapt_2_5_eq_5}
    CVC^{-1}=\diag\left[\left[%
\begin{array}{cc}
  \lambda J_{1}^{-1} & v_{12}+\omega J_{3}J_{1}^{-1} \\
  -v_{12}-\omega J_{3}J_{1}^{-1} & \lambda J_{1}^{-1} \\
\end{array}%
\right], \left[%
\begin{array}{cc}
  0 & v_{12} \\
  -v_{12} & 0 \\
\end{array}%
\right]\right].
\end{equation}
 From formula \eqref{Chapt_2_5_eq_5} it follows that the matrix  $V$ is a symmetric matrix if and only if
\begin{equation}\label{Chapt_2_5_eq_6}
J_{3}=0, \;v_{12}=0.
\end{equation}
Thus, the gyroscopic structures can be excluded from system \eqref{Chapt_2_5_eq_1} if and only if $J_{3} = 0.$

  We can assume that the condition $J_{3}=0$ is satisfied if the value of the inertia moment $J_{3}$ is fairly small in comparison with the value of $2J_{1}.$ This can be the case when the body is heavy and has a shape of a cylinder with a very small transverse section.

Following to the Sommerfeld- Greenhill concept we can set
 $\lambda=\mu J_{1},\quad \lambda_{1}=\mu J_{3},$ where $\mu$ is a small constant coefficient depending on the environment characteristics. Returning to the case of a heavy cylinder with a very small transverse section, we can assume that $\lambda_{1}=0$.

 \textbf{The elimination of the non-conservative positional structures.} As it was shown above, conditions \eqref{Chapt_2_2_eq_79} -- \eqref{Chapt_2_2_eq_81}
led us to representation \eqref{Chapt_2_5_eq_5}. Let us take $C=E^{(4)}.$
To satisfy condition  \eqref{Chapt_2_2_eq_83} we take $v_{12}=-\frac{2\omega \lambda_{1}}{\lambda}$ and according to Corollary \ref{N_2}, whose conditions are fulfilled, obtain the matrix coefficients of equation \eqref{Chapt_2_2_eq_78}
\begin{equation}\label{Chapt_2_5_eq_7}
    \begin{array}{c}
      V=\diag\left[V^{(1)}, V^{(2)}\right], \\[1.2em]
      V^{(1)}=\frac{1}{\lambda J_{1}}\left[%
\begin{array}{cc}
  \lambda^{2} & J_{3}\omega \lambda-2\omega\lambda_{1}J_{1} \\
  -J_{3}\omega \lambda+2\omega\lambda_{1}J_{1} & \lambda^{2}  \\
\end{array}%
\right], \\[1.2em]
      V^{(2)}=\frac{2\omega \lambda_{1}}{\lambda}\left[%
\begin{array}{cc}
  0 & -1 \\
  1 & 0 \\
\end{array}%
\right], \\[1.2em]
      W=\left[%
\begin{array}{cc}
  \left(\omega^{2}\frac{\lambda_{1}}{\lambda}\left(\frac{J_{3}}{J_{1}}-\frac{\lambda_{1}}{\lambda}\right)+\frac{mga}{J_{1}}(\varepsilon+1)\right)E^{(2)} & \frac{\sqrt{m}g\varepsilon}{\sqrt{J_{1}}}E^{(2)} \\
  \frac{\sqrt{m}g\varepsilon}{\sqrt{J_{1}}}E^{(2)} & \left(\frac{g}{b}-\frac{\lambda_{1}^{2}\omega^{2}}{\lambda^{2}}\right)E^{(2)} \\
\end{array}%
\right].
    \end{array}
\end{equation}
We see that, according to Corollary \ref{N_2}, the elimination of the non-conservative positional structures is possible without any additional restrictions on the parameters of system \eqref{Chapt_2_5_eq_1}.

Once the non-conservative positional structures are eliminated, we can try to find out the region of the asymptotic stability of the null solution of system  \eqref{Chapt_2_5_eq_1}. Since systems \eqref{Chapt_2_5_eq_1} and  \eqref{Chapt_2_2_eq_78}, \eqref{Chapt_2_5_eq_7} are $L$-equivalent, that is, equivalent in the sense of Lyapunov, their regions of the asymptotic stability coincide. Let us find the region of the asymptotic stability of system \eqref{Chapt_2_2_eq_78}, \eqref{Chapt_2_5_eq_7}.

Unfortunately, the matrix $V$ \eqref{Chapt_2_5_eq_7} is not a positive definite matrix, that is, the conditions of Theorem \ref{T_Uzag_Mingori} are not fulfilled. However, this problem can be overcame.
First of all let us emphasize the fact that if the parameters of system \eqref{Chapt_2_5_eq_1} are chosen in such a way that $\det\left(W\right)=0$ then the null solution of system \eqref{Chapt_2_5_eq_1} is unstable. Thus, we can assume that the matrix $W$ is nonsingular.

It is easy to verify that if
\begin{equation}\label{Chapt_2_5_eq_8}
    W>0
\end{equation}
then function $V\left(\mathbf{\xi}\right)=\mathbf{\dot{\xi}}^{T}\mathbf{\dot{\xi}}+\mathbf{\xi}^{T}W\mathbf{\xi}$ where $\mathbf{\xi}=\mathbf{\xi}(t)$ represents an arbitrary solution of system \eqref{Chapt_2_2_eq_78}, \eqref{Chapt_2_5_eq_7},
satisfies the conditions of the Krasovsky theorem on asymptotic stability (see, for example, \cite[p. 42]{Merkin}). On the other hand if the symmetric matrix $W$ \eqref{Chapt_2_5_eq_7} is nonsingular and has at least one negative eigenvalue then the function $-V\left(\mathbf{\xi}\right)$
satisfies the conditions of the Krasovsky theorem on instability (see, for example, \cite[p. 51]{Merkin}). Thus, we can conclude that condition \eqref{Chapt_2_5_eq_8} describes the required region of asymptotic stability.

The conditions of Sylvester’s criterion, when applied to the matrix $W$ \eqref{Chapt_2_5_eq_7},
lead us to the following system of inequalities:
 \begin{equation}\label{Chapt_2_5_eq_9}
P > 0, \quad PS-R^{2} > 0,
\end{equation}
 where
$$P=\left(\omega^{2}\frac{\lambda_{1}}{\lambda}\left(\frac{J_{3}}{J_{1}}-\frac{\lambda_{1}}{\lambda}\right)+\frac{mga}{J_{1}}(\varepsilon+1)\right),$$
$$S=\left(\frac{g}{b}-\frac{\lambda_{1}^{2}\omega^{2}}{\lambda^{2}}\right),\;R=\frac{\sqrt{m}g\varepsilon}{\sqrt{J_{1}}}.$$

Returning to the case of a heavy cylinder with a very small transverse section and setting
$\lambda_{1}=0$ we see that
 $$   P=\frac{m g a}{J_{1}}\left(1+\frac{a}{b}\right),\quad
    S=\frac{g}{b},\quad R=\frac{g a \sqrt{m}}{b \sqrt{J_{1}}}. $$
Therefore, the first inequality of \eqref{Chapt_2_5_eq_9} is fulfilled and the second one reduces to the form
$$\frac{m g^{2}}{J_{1}}\frac{a}{b}>0.$$ It is worth to emphasize that conditions \eqref{Chapt_2_5_eq_9} are in good agreement with the similar conditions obtained in \cite{Kosh_Storozh_MTT}. On the other hand, a sophisticated method proposed in paper \cite{Karepetyan_Lagutina}  for the stability investigation of the null solution of system \eqref{Chapt_2_5_eq_1} results in a set of inequalities which do not describe the region of asymptotic stability of the system (contrary to the expectations of the authors of the paper). The reason for that is an essential error introduced in  \cite{Karepetyan_Lagutina} by the authors.

\section{Conclusions}\label{Concl_Section}
In the present paper we have extended and generalized the results of a series of papers devoted to the question of stability investigation of the null solution of systems of second-order ODEs via the stability preserving structural transformations. The series was started with D.L. Mingori \cite{Mingori_Eng} and then continued by Von P. C.~M\"{u}ller \cite{Muller},  V.N.~Koshlyakov \cite{Koshlyakov_PMM_2000}, V.N.~Koshlyakov and V.L.~Makarov  \cite{Koshlyakov_S_MTT_1998,Kosh_Makarov_UMJ_2000,Kosh_Makarov_PMM_2001,Kosh_Makarov_PMM_2007}, V.N.~Koshlyakov  and V.A.~Storozhenko  \cite{Kosh_Storozh_MTT}.

In the paper we have found the necessary and sufficient conditions providing that a given autonomous (non-autonomous) system of second-order ODEs is equivalent in the sense of Lyapunov to some autonomous (non-autonomous) system of second-order ODEs which do not contain gyroscopic and/or non-conservative positional structures. Theorem \ref{T_Uzag_Mingori} proved in the paper generalizes the 3-rd and 4-th Kelvin -- Tait -- Chetayev theorems as well as the Mingori's \cite{Mingori_Eng} and M\"{u}ller's \cite{Muller} theorems. In Section \ref{Priklad_Section} it was shown that the theoretical results presented in the paper can be successfully applied to the stability investigation of the null solution of systems of second-order ODEs.

\bibliographystyle{plain}

\bibliography{references_stat}
\end{document}